\newtheorem{theorem}{Theorem}[section]
\newtheorem{lemma}[theorem]{Lemma}
\newtheorem{proposition}[theorem]{Proposition}
\definecolor{bbm}{RGB}{51,153,0}
\definecolor{above}{RGB}{128,0,128}
\definecolor{below}{RGB}{102,0,204}
\definecolor{cascade}{RGB}{204,0,0}
\definecolor{iid}{RGB}{153,51,0}
\theoremstyle{remark}
\newtheorem*{remark}{Remark}
\def\paragraph#1{\noindent \textbf{#1}}
\numberwithin{equation}{section}
\def\d{\mathrm{d}}
\def\<{\langle}
\def\>{\rangle}
\def\a{\alpha}
\def\b{\beta}
\def\e{\epsilon}
\def\ve{\varepsilon}
\def\g{\gamma}
\def\s{\sigma}
\def\G{\Gamma}
\def\R{{\Bbb R}}  
\def\N{{\Bbb N}}  
\def\P{{\Bbb P}}  
\def\Z{{\Bbb Z}}
\def\E{{\Bbb E}}
\let\cal=\mathcal
\def\DD{{\cal D}}
\def\HH{{\cal H}}
\def\NN{{\cal N}}
\def\ZZ{{\cal Z}}
 \def \G {{\Gamma}}
 \def \b {{\beta}}
 \def \s {{\sigma}}
 \def \g {{\gamma}}
 \def \d {{\delta}}
 \def \a {{\alpha}}
 \def \ba {\begin{array}}
 \def \ea {\end{array}}
 \newcommand{\be}{\begin{equation}}
 \newcommand{\ee}{\end{equation}}
\newcommand{\bea}{\begin{eqnarray}}
 \newcommand{\eea}{\end{eqnarray}}
\def\TH(#1){\label{#1}}\def\thv(#1){\ref{#1}}
\def\Eq(#1){\label{#1}}\def\eqv(#1){(\ref{#1})}
\def\cov{\hbox{\rm Cov}}
\def\sfrac#1#2{{\textstyle{#1\over #2}}}
 \def \1{\mathbbm{1}}
\def\wt {\widetilde}
\def\eee{{\mathrm e}}
\def\as{\hbox{\rm {a.s.}}}
    \def\limlaw{\buildrel \DD\over\rightarrow}
\begin{document}

\title[Fluctuations in p-spin SK models] {Fluctuations of the free energy in p-spin SK models on two scales} 
\author[A. Bovier]{Anton Bovier}
 \address{A. Bovier\\Institut f\"ur Angewandte Mathematik\\
Rheinische Friedrich-Wilhelms-Universität\\ Endenicher Allee 60\\ 53115 Bonn, Germany }
\email{bovier@uni-bonn.de}

\author[A. Schertzer]{Adrien Schertzer}            
\address{A. Schertzer \\ Institut f\"ur Angewandte Mathematik\\
Rheinische Friedrich-Wilhelms-Universität\\ Endenicher Allee 60\\ 53115 Bonn, Germany }

\email{aschertz@uni-bonn.de}

\date{\today}
\begin{abstract} 
20 years ago, Bovier, Kurkova, and L\"owe \cite{BKL} proved a central limit theorem  (CLT) for the fluctuations of the free energy in the $p$-spin version of the Sherrington-Kirkpatrick
model of spin glasses at high temperatures. In this paper we improve their results in two ways. First, we extend the range of temperatures to cover the entire regime where 
the quenched and annealed free energies are known to coincide. Second, we identify the main 
source of the fluctuations as a purely coupling dependent term, and we show 
a further CLT for the deviation of the free energy around this random object. 
\end{abstract}

\subjclass[2000]{82C44,60K35} \keywords{Spin glasses, $p-$spin models, central limit theorems.}

\thanks{
This work was partly funded by the Deutsche Forschungsgemeinschaft (DFG, German Research Foundation) under Germany's Excellence Strategy - GZ 2047/1, Projekt-ID 390685813 and GZ 2151 - Project-ID 390873048,
through Project-ID 211504053 - SFB 1060, and contract number 2337/1-1.
AS thanks Nicola Kistler for very helpful conversations.
 }

 \maketitle

\section{ \bf Introduction}
The  $p$-spin interaction version of the Sherrington-Kirkpatrick \cite{SK}
is a spin system defined on the hypercube $S_N\equiv \{-1,+1\}^N$ where the random Hamiltonian is 
given in terms of a Gaussian process 
 $X_{\cdot}: S_N \to \R$ given by 
\be\label{def1}
X_\s= \binom{N}{p}^{-\frac{1}{2}} \sum_{1 \leq i_1<i_2,\ldots<i_p \leq N} J_{i_1,i_2,\dots,i_p} \s_{i_1}
\s_{i_2}\cdots
\s_{i_p},
\ee
where the $\{J_{i_1,\dots,i_p}\}_{i_1,\dots, i_p=1}^\infty$  
is a family of independent, standard normal random variables defined on some   probability space $(\Omega, \mathcal F, \P)$. 
Alternatively, $X$ is characterised uniquely as the Gaussian field on $S_N$ withe mean 
zero and covariance 
\be
\E \left(X_\s X_{\s'}\right) \equiv f_{p,N} \left(R_N(\s,\s')\right),
\ee
where 
\be
R_N(\s,\s')\equiv  \frac{1}{N}(\s,\s')\equiv \frac{1}{N}\sum_{i=1}^N \s_i\s'_i
\ee
is the \emph{ overlap} between the configurations $\s, \s'$, and $f_{p,N}$ is of the form (see \cite{BKL})
\be \label{fondamental_zero}
f_{p,N}\left(x\right)=\sum_{k=0}^{[p/2]} d_{p-2k}N^{-k}x^{p-2k}(1+O(1/N)),
\ee
where 
\be
d_{p-2k} \equiv (-1)^k \binom{p}{2k}k!!.
\ee
In particular,  
\be\label{fondamental}
 f_{p,N} \left(x\right)= x^p \left(1+O(1/N) \right)\,, \quad \text{uniformly for} \; x\in [-1, 1]. 
\ee
The model with $p=2$ is the classical SK model, introduced in \cite{SK}, and the general version with $p>3$, by Gardner \cite{G85}. 
 The \emph{ Hamiltonian} is given by 
\be
H_N(\s) \equiv -\sqrt{N} X_\s,
\ee
and the \emph{ partition function} is 
\be
Z_{N}(\beta) \equiv \E_\s\left[  \eee^{-\b H_N(\s)}\right] \equiv 2^{-N}\sum_{\s\in S_N}
 \eee^{ \b\sqrt N  X_\s}.
\ee
Finally, minus  the \emph{ free energy} is 
\be
F_{N}(\b) \equiv \frac 1{N} \ln Z_{N}(\b).
\ee
For $m \in (0,1)$, let
\be
\phi(m)\equiv \frac{1-m}{2}\ln (1-m) +\frac{1+m}{2}\ln (1+m), 
\ee
and  
\be
\beta^{2}_p\equiv \inf_{0<m<1}(1+m^{-p})\phi(m),
\ee
for $p\geq 3$, and $\b_2=1$.
It is a well-known consequence of Gaussian concentration of measure theorems, that the 
free energy is self-averaging in the sense that 
\be
\lim_{N\uparrow\infty} F_N(\b)= \lim_{N\uparrow\infty} \E\left[F_N(\b)\right], \as.
\ee
The existence of the limit on the right-hand side was established in a celebrated paper
by Guerra and Toninelli \cite{GueTo}. 
For $\b<\b_p$, it is even true that the so-called  \emph{quenched free energy} on the right-hand side is equal to the so-called \emph{annealed free energy}, that is
\be
\lim_{N\uparrow\infty} \E\left[F_N(\b)\right]
=\lim_{N \to \infty} \frac{1}{N} \ln \E [Z_N(\b)] = \frac{\b^2}{2}.
\ee
This fact was first proven for $p=2$ by Aizenman, Lebowitz, and Ruelle \cite{ALR} and a very simple proof was given later by Talagrand \cite{T0}. The proof in the case $p\geq 3$ is also due to Talagrand \cite{T3}.
 Note that
\be
 \lim_{p\uparrow +\infty} \beta_p=\sqrt{2\ln 2},
\ee
which is the well-known critical temperature of the REM \cite{De2}.  It is, however, not known whether  $\b_p$ is the true critical 
value in general.
It is natural to ask about fluctuations around this limit. This was first done by
Comets and Neveu \cite{CN}, who used the martingale central limit theorem (CLT) for 
the free energy in the case $p=2$ for all $\b<1$. 
The case $p\geq 3$ was analysed by Bovier, Kurkova, and L\"owe \cite{BKL}, also using  martingale methods. They established a CLT in a range $\b<\tilde \b_p$, for 
some $\tilde \b_p<\b_p$. 
 Our first result extends this to the entire range $\b<\b_p$. 
 
\begin{theorem} \label{tresgolri} For all $p\geq 3$ and  $\beta< \beta_p$, 
\be \label{clt_spin}
N^{\frac{p}{2}} \left(F_{N}(\beta) -\frac{\beta^2}{2}\right)\limlaw\mathcal{N}\left(0, \frac{\beta^4p!}{2} \right), \hbox{\rm as} \; N\uparrow\infty.
\ee
\end{theorem}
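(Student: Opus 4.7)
The plan is to replace the BKL martingale method by a direct Wiener-chaos / second-moment analysis of the normalized partition function. This has the double advantage of making the leading fluctuation explicit and of reaching the full regime $\beta<\beta_p$, since the temperature only enters via a bound that is known to hold up to $\beta_p$.

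Normalize by the annealed partition function: set $W_N := Z_N(\beta)/\E[Z_N(\beta)]$. A direct computation gives $\E[X_\sigma^2] = \binom{N}{p}^{-1}\sum_{|I|=p}(\sigma^I)^2 = 1$, hence $f_{p,N}(1)=1$ exactly, $\log \E[Z_N]=N\beta^2/2$, and $\log Z_N - N\beta^2/2 = \log W_N$. Expanding each Wick exponential in probabilist's Hermite polynomials yields the orthogonal chaos decomposition $W_N - 1 = \sum_{k\ge 1} A_k$ with $A_k = \frac{(\beta\sqrt N)^k}{k!}\cdot 2^{-N}\sum_\sigma H_k(X_\sigma)$. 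The level-one term vanishes because $2^{-N}\sum_\sigma \sigma^I = 0$ for any $|I|=p\ge 1$, and using $H_2(x)=x^2-1$ together with $2^{-N}\sum_\sigma \sigma^I\sigma^{I'} = \1_{I=I'}$, the level-two term collapses to
\[
A_2 = \frac{\beta^2 N}{2}\left[\binom{N}{p}^{-1}\!\sum_{|I|=p}J_I^2 - 1\right],
\]
which is the \emph{purely coupling-dependent} object advertised in the abstract. Since $A_2$ is an affine function of a sum of $\binom{N}{p}\sim N^p/p!$ i.i.d.\ $\chi_1^2$ random variables, the classical CLT immediately gives $N^{p/2-1}A_2 \limlaw \mathcal N(0,\beta^4 p!/2)$.

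The main technical step is to show that the higher chaos levels are negligible compared to the leading scale $N^{2-p}$, i.e.\ $\sum_{k\ge 3}\Var(A_k) = o(N^{2-p})$. By orthogonality this is equivalent to $\E[W_N^2]-1-\Var(A_2) = o(N^{2-p})$, and one uses the identity $\E[W_N^2] = 2^{-2N}\sum_{\sigma,\sigma'}e^{\beta^2 N f_{p,N}(R)}$ with $R=R_N(\sigma,\sigma')$. Split the double sum according to whether $|R|\le N^{-1/2+\ve}$ or not. On the small-overlap part, Taylor-expand the exponential: the linear term averages to $0$ (since $\E_{\mathrm{unif}}[f_{p,N}(R)]=0$), the quadratic term reproduces $\Var(A_2)$ via the exact identity $\E_{\mathrm{unif}}[f_{p,N}(R)^2]=\binom{N}{p}^{-1}$, and the remaining terms are controlled by Gaussian-type moment bounds on $R$. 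On the large-overlap part, use the entropy estimate $2^{-2N}\#\{(\sigma,\sigma'):R(\sigma,\sigma')=m\}\le e^{-N\phi(m)}$ combined with the variational definition $\beta_p^2 = \inf_{0<m<1}(1+m^{-p})\phi(m)$, which guarantees $\sup_{|m|\ge\delta}[\beta^2 f_{p,N}(m) - \phi(m)] \le -c(\beta) < 0$ uniformly in $N$ for every $\beta<\beta_p$, yielding an $e^{-c(\beta)N}$ contribution. This uniform exploitation of the very definition of $\beta_p$ is the crux of the argument and the main obstacle of the proof: reaching all $\beta<\beta_p$ (instead of the BKL threshold $\tilde\beta_p$) requires the second-moment bound precisely in this sharp form, together with some care in showing that the truncation at $N^{-1/2+\ve}$ does not spoil the small-overlap Taylor expansion.

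Finally, pass from $W_N$ to $\log W_N$. Since $\Var(W_N) = O(N^{2-p})$ by the previous step, we have $W_N\to 1$ in $L^2$, so the Taylor expansion $\log W_N = (W_N-1) - \tfrac12(W_N-1)^2 + O((W_N-1)^3)$ combined with $(W_N-1)^j = O_\P(N^{j(2-p)/2})$ yields
\[
N^{p/2-1}\log W_N = N^{p/2-1}(W_N-1) + o_\P(1)
\]
for $p\ge 3$. Combining with the CLT for $A_2$ and the negligibility of $\sum_{k\ge 3}A_k$, one concludes $N^{p/2}(F_N-\beta^2/2) = N^{p/2-1}\log W_N \limlaw \mathcal N(0, \beta^4 p!/2)$, which is Theorem \ref{tresgolri}.
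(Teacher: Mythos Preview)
Your chaos decomposition is correct and your identification of $A_2$ is exactly the paper's object $N(J_N(\beta)-\beta^2/2)$, so the skeleton of the argument matches the paper's strategy. The gap is in your large-overlap bound for the second moment.

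The implication you claim is false: the variational formula $\beta_p^2=\inf_{0<m<1}(1+m^{-p})\phi(m)$ does \emph{not} yield $\beta^2 f_{p,N}(m)-\phi(m)<0$ for all $m$ when $\beta<\beta_p$. It only yields the weaker inequality $\beta^2 m^p/(1+m^p)<\phi(m)$. Concretely, at $m=1$ one has $\beta^2 f_{p,N}(1)-\phi(1)=\beta^2-\ln 2$, which is positive whenever $\beta^2>\ln 2$; since $\beta_p^2\to 2\ln 2$ as $p\to\infty$, for large $p$ there are $\beta<\beta_p$ with $\beta^2>\ln 2$. For such $\beta$ the untruncated second moment $\E[W_N^2]=\E_{\sigma,\sigma'}\exp\{\beta^2 N f_{p,N}(R)\}$ is exponentially large (the near-diagonal contribution alone is of order $e^{N(\beta^2-\ln 2)}$), and your entire $L^2$ scheme collapses. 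In effect you have recovered the plain second-moment threshold $\inf_m \phi(m)/m^p$, which is strictly below $\beta_p^2$ --- exactly the kind of sub-optimal range the paper is designed to surpass.

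The paper's remedy is a truncation: work with $Z_\epsilon^{\leq}=\E_\sigma\big[e^{-\beta H_N(\sigma)}\1_{\{-H_N(\sigma)\le(1+\epsilon)\beta N\}}\big]e^{-NJ_N(\beta)}$. The indicator lets one estimate the exponential moment of $X_\sigma+X_{\sigma'}$ at the truncation level rather than at its mean; after H\"older and a short computation the large-overlap exponent then contains the combination $(1+f_{p,N}(m)^{-1})\phi(m)$ in place of $\phi(m)/f_{p,N}(m)$, and it is precisely this combination whose infimum defines $\beta_p^2$. The discarded tail $Z_\epsilon^{>}$ is controlled separately via Gaussian concentration and the known bound $\lvert\E\ln Z_N-\ln\E Z_N\rvert=O(\sqrt N)$ valid for $\beta<\beta_p$. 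Without this truncation device (or an equivalent mechanism), your argument cannot reach the full range $\beta<\beta_p$.
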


The  proof of Theorem \thv(tresgolri) is very different from that in \cite{BKL}
and in a sense closer to that of Aizenman et al. \cite{ALR} in the case $p=2$. In fact, we show that the limiting Gaussian comes from a very explicit term 
\be\Eq(theJ.1)
J_N(\beta) \equiv \frac 1{2N}E_\s\left(\b^2 H_N(\s)^2\right)=\frac{\beta^2}{2 \binom{N}{p}}  \sum_{1\leq i_1<\ldots<i_p \leq N}  J_{i_1, i_2,\dots,i_p}^2.
\ee
$J_N(\b)$ is a sum of independent square integrable random variables, and hence by the law of large numbers, for all $\b$,
\be \label{lln}
\lim_{N \to \infty} J_N(\beta) = \frac{\b^2}{2}\,, \as,
\ee
and by the central limit theorem,
\be \label{classical_clt}
 N^{\frac{p}{2}} \left( J_N(\beta) - \frac{\b^2}{2} \right) \limlaw \mathcal{N}\left(0, \frac{\beta^4p!}{2} \right), \hbox{\rm as} \,N\uparrow\infty.
\ee
That $J_N$ and the $F_N$ have the same limits is not a coincidence. In fact, we prove 
Theorem \thv(tresgolri)  by proving that 
\be\Eq(goto.0)
\lim_{N\uparrow\infty}
 N^{\frac{p}{2}} \left( F_N(\b) -J_N(\beta) \right) =0,
 \ee
 in probability. This naturally leads to the question whether upon proper rescaling, the 
 quantity $F_N(\b)-J_N(\b)$ converges to a random variable. The positive answer is the 
 main result of this paper and given by the following theorem.

\begin{theorem} \label{fluctuationspin0}
For $p>2$ and for all $\beta< \beta_p$, we have
\be\label{megagolri2}
 A_N(p)   \Big(F_{N}(\beta)-J_N(\beta) \Big)\limlaw \mathcal{N}\left(\mu(\b,p), {\sigma\left(\beta,p\right)}^2\right),
 \ee
 where 
 \begin{itemize}
 \item[(i)] For $p$ even, 
 \be
A_N(p)=   N^{\left(\frac{3p}{4}-\frac{1}{2}\right)},\quad
\mu(\beta,p)=0,
\ee
and
\be
\sigma\left(\beta,p\right)^2=
          \frac{\beta^6}{3}\E\left[\left(\sum_{k=0}^{p/2} d_{p-2k}{X}^{p-2k}\right)^3\right].
\ee
 \item[(ii)] For $p$ odd,
\be
A_N(p)= 
         N^{p-1},\quad \mu(\beta,p)=  \frac{-\beta^4p!}{4},
\ee
and 
\be
\sigma\left(\beta,p\right)^2= 
         \frac{\beta^8}{12}\E\left[ \left(\sum_{k=0}^{[p/2]} d_{p-2k}{X}^{p-2k}\right)^4 \right]-\frac{\beta^8 p!^2}{8}.
 \ee
 \end{itemize}Here $X$ is a standard normal random variable and $d_{p-2k}=(-1)^k \frac{p(p-1)\dots(p-2k+1)}{2^kk!}$.
\end{theorem}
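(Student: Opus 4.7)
I propose a Hermite polynomial expansion of $\tilde Z_N := e^{-N\beta^2/2} Z_N = E_\sigma[e^{\beta\sqrt N X_\sigma - \beta^2 N/2}]$. Using the generating-function identity $e^{tx - t^2/2} = \sum_{k\geq 0}(t^k/k!)H_k(x)$ with $t = \beta\sqrt N$, $x = X_\sigma$ (where $H_k$ is the probabilists' Hermite polynomial), I would introduce
\be
T_k := \frac{(\beta\sqrt N)^k}{k!}\,E_\sigma H_k(X_\sigma),
\ee
so that $\tilde Z_N = 1 + \sum_{k\geq 1} T_k$. Two identifications are crucial: $T_1 = 0$ (since $E_\sigma X_\sigma = 0$) and $T_2 = \frac{\beta^2 N}{2}(E_\sigma X_\sigma^2 - 1) = NJ_N - \frac{N\beta^2}{2}$, so that the subtraction of $NJ_N$ exactly removes the order-$k=2$ Hermite mode. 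Thus
\be
\ln Z_N - NJ_N = \ln \tilde Z_N - T_2 = \sum_{k\geq 3} T_k - \tfrac{1}{2}\Bigl(\sum_{k\geq 2} T_k\Bigr)^2 + \tfrac{1}{3}\Bigl(\sum_{k\geq 2} T_k\Bigr)^3 - \ldots
\ee
by the Taylor expansion of $\log$. From Hermite orthogonality $\E[(E_\sigma H_k(X_\sigma))^2] = k!\,E_{\sigma,\sigma'}[f_{p,N}(R)^k]$ combined with $\lim_{N\to\infty}N^{pk/2}E_{\sigma,\sigma'}[f_{p,N}(R)^k] = \E[H_p(X)^k]$ (which follows from the identity $\sum_j d_{p-2j}X^{p-2j} = H_p(X)$, using the second definition of $d_{p-2k}$ given in the theorem), one obtains $\mathrm{std}(T_k) \asymp N^{k(2-p)/4}$. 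Moreover, $T_k = 0$ whenever $pk$ is odd, by the $\sigma \mapsto -\sigma$ parity.

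For $p$ \emph{even}, the unique leading term at scale $N^{3(2-p)/4}$ is $T_3 = (\beta^3 N^{3/2}/6)E_\sigma X_\sigma^3$ (since $H_3(x)=x^3-3x$ and $E_\sigma X_\sigma=0$). Expanding $E_\sigma X_\sigma^3$, admissibility forces the triples $(\vec i,\vec j,\vec k)$ to take the form $(A\cup B, A\cup C, B\cup C)$ with disjoint $A, B, C$ of size $p/2$, yielding
\be
\frac{A_N}{N}T_3 = \frac{\beta^3 N^{3p/4}}{6\,\binom{N}{p}^{3/2}} \sum_{\substack{(A,B,C)\text{ ordered disjoint}\\ |A|=|B|=|C|=p/2}} J_{A\cup B}J_{A\cup C}J_{B\cup C},
\ee
a sum of essentially independent triple-Gaussian products to which a standard CLT applies; the limiting variance is then identified via $\E[H_p(X)^3]$. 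For $p$ \emph{odd}, $T_3 = T_5 = \cdots = 0$, so the leading contribution comes jointly from $T_4$ and $-T_2^2/2$, both of scale $N^{2-p}$. The key algebraic observation is
\be
T_4 - \tfrac{1}{2}T_2^2 = \frac{\beta^4 N^2}{24}\bigl[E_\sigma X_\sigma^4 - 3(E_\sigma X_\sigma^2)^2\bigr].
\ee
From $\E[E_\sigma X_\sigma^4] = 3$ and $\E[(E_\sigma X_\sigma^2)^2] = 1 + 2/\binom{N}{p}$, I would extract the bias $\mu = -\beta^4 p!/4$. For the variance, rewriting the bracket as $E_\sigma H_4(X_\sigma) - 3(E_\sigma H_2(X_\sigma))^2$ and applying Wick's theorem to $\E[H_4(X_{\sigma_1})H_4(X_{\sigma_2})]$ and $\E[H_2(X_{\sigma_1})^2 H_2(X_{\sigma_2})^2]$ delivers the two pieces of $\sigma(\beta,p)^2$; a CLT for the resulting fourth-order Gaussian chaos then concludes.

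The principal obstacle will be controlling the truncation errors in both the Hermite series and the logarithm expansion, so that every discarded term is $o_\P(1/A_N)$. For the logarithm expansion, this requires $\tilde Z_N \to 1$ in probability at a quantitative rate, which in turn rests on Talagrand's second-moment estimate, valid throughout the regime $\beta < \beta_p$. For the Hermite expansion, uniform $L^2$-bounds on each $T_k$ combined with the rapid decay of $(\beta\sqrt N)^k/k!$ should suffice, but care must be taken because the constants degrade as $\beta \uparrow \beta_p$; this is precisely where the sharpness of the threshold $\beta_p$ becomes essential to the analysis.
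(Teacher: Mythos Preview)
Your Hermite--polynomial organisation is very close in spirit to what the paper does, but packaged more cleanly. The paper expands $\mathcal Z_N(\b)=Z_N\eee^{-NJ_N}$ (random centering) by a raw Taylor expansion of the exponential, isolating
\[
T_N(\b)=1-\tfrac{\b^4}{8}\bigl(\E_\s H_N^2\bigr)^2-\tfrac{\b^3}{3!}\E_\s H_N^3+\tfrac{\b^4}{4!}\E_\s H_N^4,
\]
and proves separately (Proposition~\ref{normal1}) that $\a_N(p)(T_N-1)\limlaw\NN(\mu,\s^2)$ by the method of moments, and (Proposition~\ref{diff0}) that $\a_N(p)|\ZZ_N-T_N|\to 0$ in probability. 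Your $T_3$ and $T_4-\tfrac12 T_2^2$ are, after one line of algebra, exactly the paper's $\b^3\E_\s(-H_N^3)/3!$ and $\b^4\bigl[\tfrac1{4!}\E_\s H_N^4-\tfrac18(\E_\s H_N^2)^2\bigr]$. Your identification $\sum_k d_{p-2k}X^{p-2k}=H_p(X)$ and the resulting variance formula $\s^2\propto\E[H_p(X)^k]$ is a genuine simplification over the paper's computation via $\sum_m (N^{p/2}f_{p,N}(m))^k p_N(m)$; the paper never names this polynomial. For the CLT itself the paper does a direct moment computation (Lemmata~\ref{varsecmom}--\ref{vanodd} and \ref{varsecmom'}--\ref{vanodd'}), whereas you invoke a chaos CLT; either works, but the combinatorics of the higher moments (counting how many free multi-indices survive after Wick pairings) is the actual content in both cases.

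There is one point where your plan, as written, would break. You propose to control the Hermite tail $\sum_{k\ge K}T_k$ by ``uniform $L^2$-bounds on each $T_k$ combined with the rapid decay of $(\b\sqrt N)^k/k!$''. The $T_k$ are indeed orthogonal, so $\E\bigl[(\sum_{k\ge K}T_k)^2\bigr]=\sum_{k\ge K}\E[T_k^2]$; but this sum equals $\E[\tilde Z_N^2]$ minus finitely many terms, and
\[
\E[\tilde Z_N^2]=\E_{\s,\s'}\bigl[\eee^{\b^2 N f_{p,N}(R_N(\s,\s'))}\bigr]
\]
is bounded only for $\b^2<\inf_{m}\phi(m)/m^p$, which is \emph{strictly smaller} than $\b_p^2=\inf_m(1+m^{-p})\phi(m)$. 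So a pure $L^2$ argument on the untruncated $\tilde Z_N$ cannot reach the full range $\b<\b_p$. The paper handles this by working throughout with the truncated object $Z_\e^\leq=\E_\s\bigl[\eee^{-\b H_N}\1_{\{-H_N\le(1+\e)\b N\}}\bigr]\eee^{-NJ_N}$: the contribution $Z_\e^>$ is shown to be exponentially negligible for all $\b<\b_p$ via Gaussian concentration plus the quenched--annealed equality (Lemma~\ref{conc.1}), and the second moment of $Z_\e^\leq$ is then computed with the required precision by splitting on the overlap (Lemmata~\ref{B.1}, \ref{A.1}, \ref{claim5}, \ref{claim8}). Your reference to ``Talagrand's second-moment estimate'' suggests you are aware of this device; but it has to be applied to the remainder itself, not just to justify $\tilde Z_N\to 1$, and the Hermite orthogonality does not survive the truncation, so you will end up doing essentially the same overlap-splitting analysis as the paper.
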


Compared to \eqref{clt_spin}, Theorem \ref{fluctuationspin0}  provides a higher-level resolution of the  limiting picture.  In fact, in the course of the proof we also identify exactly
the terms arising in the expansion of the partition function that converge to the 
Gaussian in \eqv(megagolri2). Thus, one might envision that, once these terms are again 
subtracted, on a smaller scale, there appears yet another limit theorem. This might even continue ad infinitum. To prove such a result appears, however, rather formidable and 
will be left to future research.

It is interesting to compare this picture with the $p=2$ case. In that case, the variance of the limiting Gaussian distribution blows up at the critical temperature, and thus detects the phase transition. For $p>2$, this is not the case for the Gaussian from Theorem 
\thv(tresgolri), nor for  the corrections given by Theorem \ref{fluctuationspin0}.
This is of course completely in line with the predictions by theoretical physics pertaining to the so-called Gardner's transition \cite{G85}.  

Results similar to Theorem \thv(tresgolri) have been obtained for several related models. Chen et al. \cite{CDP17} obtained analogous results to \cite{BKL} for mixed $p$-SK models, i.e. where the Hamiltonian is given 
as a linear combination of terms of type \eqv(def1) with different $p$ where only even $p$ appear, and recently this was extended
to the general case by  Banerjee and Belius \cite{BB21}. For spherical SK-models, related results were obtained by Baik and Lee
\cite{BL16}. We are not aware of any results like Theorem \thv(fluctuationspin0). 
The paper is organised as follows. In the next section, we  present the proof of Theorem \ref{tresgolri}. Many of the results obtained in the course of the proof are re-used in Section 3 where Theorem \ref{fluctuationspin0} is proven. In the appendix we state 
two frequently used facts about Gaussian random variables for quick reference. 


\section{ \bf Proof of Theorem \ref{tresgolri}.}  

In view of \eqv(classical_clt), to prove Theorem \thv(tresgolri), it is enough to establish that \eqv(goto.0) holds for all  $\b < \b_p$. Setting
\be
\mathcal{Z}_N(\b) \equiv Z_{N}(\b)\eee^{-NJ_N(\b)},
\ee
this amounts to showing that, for $\b < \b_p$,
\be\label{Th0}
\lim_{N \to \infty }N^{\frac{p-2}{2}}\ln \mathcal{Z}_N(\b) =0, \ \text{in probability}.
\ee
The proof of \eqref{Th0} turns out to be remarkably difficult if the entire range $\b<\b_p$ 
is to be covered. This will require a truncation.   For $\epsilon>0$, we set
\be\label{defZ}
\mathcal Z_N(\b) = Z_\epsilon^{\leq} + Z_\epsilon^{>}, 
\ee
where
\be
Z_\epsilon^{\leq} \equiv \E_\s\left(\eee^{-\b H_N(\s)} \1_{\{-H_N(\s)\leq (1+\epsilon) \b N\}}\right)\eee^{-N J_N(\b)}, \quad Z_\epsilon^{>} \equiv \E_\s\left(\eee^{-\b H_N(\s)} 
\1_{\{-H_N(\s)> (1+\epsilon) \b N\}}\right)\eee^{-N J_N(\b)}\,,
\ee
where we dropped  obvious dependencies on the parameters $\b, N$ to lighten 
the notation.
 
We decompose
 \be\label{decomposition1}
N^{\frac{p-2}{2}} \ln \mathcal{Z}_N(\b) = N^{\frac{p-2}{2}} \ln\left( \frac{\mathcal{Z}_N(\b) }{{Z_{\epsilon}^{\leq}}} \right) +N^{\frac{p-2}{2}} \ln\left(\frac{{Z_{\epsilon}^{\leq}}}
{\E [Z_{\epsilon}^{\leq}]}\right)+ N^{\frac{p-2}{2}} \ln \E [Z_{\epsilon}^{\leq} ] .
\ee
The assertion of the theorem then follows from the fact that all three terms on the right-hand side of \eqv(decomposition1) converge to zero in probability.

\begin{proposition}\TH(key.1) 
\begin{itemize}
\item[(i)] For any $q \in \N$, $\b < \b_p $ and small enough $\epsilon = \epsilon(\b, p) >0$, 
\be\Eq(K.1)
\lim_{N\uparrow +\infty} N^{q} \ln\left(\frac{\mathcal{Z}_N(\b) }{{Z_{\epsilon}^{\leq}}}\right)=0,\; \text{in probability}.
\ee
\item[(ii)] For $\b < \b_p $ and small enough $\epsilon = \epsilon(\b, p) >0,$
\be\Eq(K.2)
\lim_{N\uparrow +\infty} N^{\frac{p-2}{2}} \ln\left(\frac{{Z_{\epsilon}^{\leq}}}{\E {Z_{\epsilon}^{\leq}}}\right)=0,\;  \text{in probability},
\ee
\item[(iii)] For any $\b,  \epsilon > 0$,
\be\Eq(K.3)
\lim_{N\uparrow +\infty} N^{\frac{p-2}{2}} \ln \E [Z_{\epsilon}^{\leq}] =0\,.
\ee
\end{itemize}
\end{proposition}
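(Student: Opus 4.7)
The plan is to handle (iii) and (i) via direct Gaussian integration and a Markov bound, and to settle (ii) by a truncated second-moment method, which is the only step that genuinely uses $\beta<\beta_p$ and is the main obstacle. Logically one proves (iii) first, then (ii), then (i), which builds on both.

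For (iii), the point is that $NJ_N(\beta)$ is a quadratic form in the couplings and the relevant integrals are explicit. Fix $\sigma\in S_N$ and let $\boldsymbol{a}(\sigma)$ be the unit vector whose entries are $\binom{N}{p}^{-1/2}\sigma_{i_1}\cdots\sigma_{i_p}$ for $1\leq i_1<\cdots<i_p\leq N$, so that $X_\sigma=\langle\boldsymbol{a}(\sigma),\boldsymbol{J}\rangle$. Decompose $\boldsymbol{J}=X_\sigma\boldsymbol{a}(\sigma)+\boldsymbol{J}^\perp$ with $\boldsymbol{J}^\perp\perp\boldsymbol{a}(\sigma)$; then $\boldsymbol{J}^\perp$ is Gaussian and independent of $X_\sigma$, and
\[
NJ_N(\beta)=c\bigl(X_\sigma^2+\|\boldsymbol{J}^\perp\|^2\bigr),\qquad c\equiv\frac{N\beta^2}{2\binom{N}{p}}=O(N^{1-p}).
\]
By symmetry $\E[Z_\epsilon^{\leq}]$ reduces to the single-$\sigma$ integral, which factorises into a truncated one-dimensional Gaussian integral in $X_\sigma$ and the chi-squared Laplace transform $(1+2c)^{-(\binom{N}{p}-1)/2}$. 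Completing the square in the Gaussian factor yields a prefactor $(1+2c)^{-1/2}e^{\beta^2 N/(2(1+2c))}$ times a tail $\P[\mathcal{N}(0,1)\leq(T-\mu)\sqrt{1+2c}]$ with $T=(1+\epsilon)\beta\sqrt N$ and $\mu=\beta\sqrt N/(1+2c)$; since $T-\mu\sim\epsilon\beta\sqrt N$, this tail is $1$ up to a Gaussian-small correction. A Taylor expansion in $c$ then shows that the leading $\pm N\beta^2/2$ contributions of the two surviving factors cancel exactly, and the first non-vanishing term is of order $\binom{N}{p}c^2=O(N^{2-p})$, which proves (iii).

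For (i), the same decomposition evaluates $\E[Z_\epsilon^{>}]$, the only change being that the Gaussian tail becomes $\P[\mathcal{N}(0,1)>(T-\mu)\sqrt{1+2c}]\leq e^{-\epsilon^2\beta^2 N/2+o(N)}$. Hence $\E[Z_\epsilon^{>}]\leq e^{-\epsilon^2\beta^2 N/2+o(N)}$, and Markov's inequality places $Z_\epsilon^{>}$ below $e^{-\epsilon^2\beta^2 N/4}$ on an event of probability tending to one. Combining with (ii) and (iii), which together give $Z_\epsilon^{\leq}\to 1$ in probability, the ratio $Z_\epsilon^{>}/Z_\epsilon^{\leq}$ is super-exponentially small, so $\ln(\mathcal{Z}_N/Z_\epsilon^{\leq})=\ln(1+Z_\epsilon^{>}/Z_\epsilon^{\leq})$ decays faster than any negative power of $N$, establishing (i).

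Part (ii) is the hard one. The strategy is to prove the truncated second-moment bound
\[
\frac{\E[(Z_\epsilon^{\leq})^2]}{\E[Z_\epsilon^{\leq}]^{2}}=1+o(N^{-(p-2)}),
\]
from which Chebyshev's inequality yields $|Z_\epsilon^{\leq}/\E[Z_\epsilon^{\leq}]-1|=o(N^{-(p-2)/2})$ in probability, hence (ii). Repeating the decomposition above in the two-dimensional subspace spanned by $\boldsymbol{a}(\sigma)$ and $\boldsymbol{a}(\sigma')$ reduces the second moment to a sum over pairs of overlap $m=R_N(\sigma,\sigma')$ whose effective exponent, after combining the bivariate Gaussian factor $e^{\beta^2 N f_{p,N}(m)}$ with the combinatorial weight $e^{-N\phi(m)}$ and the contribution of the truncation, is essentially $N(\beta^2 m^p-\phi(m))$. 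The main obstacle is to show that this exponent is strictly negative uniformly on $(0,1]$: for small $m$ one has $\phi(m)\sim m^2/2$, which beats $\beta^2 m^p$ once $p\geq 3$, while for $m$ bounded away from $0$ (the near-parallel regime that would otherwise blow the second moment up) the truncation $X_\sigma,X_{\sigma'}\leq(1+\epsilon)\beta\sqrt N$ forbids two overlapping replicas from being atypically energetic simultaneously, and the residual saddle-point exponent is controlled precisely by the infimum $(1+m^{-p})\phi(m)\geq\beta_p^2>\beta^2$ that defines $\beta_p$. Propagating these estimates quantitatively to the rate $o(N^{-(p-2)})$ in the second moment is the delicate technical point; once achieved, Chebyshev finishes the job.
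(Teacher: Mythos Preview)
Your overall plan is sound and for parts (iii) and (ii) tracks the paper's strategy closely: Gaussian integration for the first moment, then a truncated second-moment estimate plus Chebyshev for (ii), with the overlap decomposition and the role of $\beta_p$ identified correctly. Your orthogonal-decomposition packaging is a minor variant of the paper's product-over-multi-indices computation.

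For (i) you take a genuinely different and simpler route. The paper proves (i) \emph{independently} of (ii): it applies Markov to the ratio $Z_\epsilon^{>}/\mathcal{Z}_N(\beta)$, then controls the random denominator via Gaussian concentration of measure together with the a priori bound $-K\sqrt N\le \E\ln\E_\sigma e^{-\beta H_N(\sigma)}-\ln\E\E_\sigma e^{-\beta H_N(\sigma)}\le 0$ (the quenched--annealed comparison). Your argument avoids this machinery by first bounding $\E[Z_\epsilon^{>}]$ exponentially (as the paper also does), then invoking (ii) and (iii) to get $Z_\epsilon^{\leq}\to 1$ in probability, which makes the denominator harmless. This is cleaner; the price is a logical dependency of (i) on (ii), which is fine here.

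For (ii) your sketch stops short of the actual work. The paper obtains $\E[(Z_\epsilon^{\leq})^2]/\E[Z_\epsilon^{\leq}]^2=1+O(N^{3-3p/2})$ by a three-regime split in $|m|$: the tiny-overlap regime $|m|<N^{-\alpha}$ (for some $1/p<\alpha<1/2$) is computed explicitly by Taylor-expanding $e^{\beta^2 N f_{p,N}(m)}$ to third order and evaluating the moments $\sum_m f_{p,N}(m)^k p_N(m)$ exactly; the intermediate regime $N^{-\alpha}\le|m|<(\ln N)^{-1}$ is handled by dropping the truncation and using $\phi(m)\sim m^2/2$; only the large-overlap regime $|m|\ge(\ln N)^{-1}$ uses the truncation, via H\"older rather than your two-dimensional decomposition, and this is where the inequality $(1+m^{-p})\phi(m)\ge\beta_p^2>\beta^2(1+2\epsilon)$ enters. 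Your two-dimensional decomposition would in principle give an exact formula and avoid H\"older, but you would still need the same three-scale analysis to extract the rate; the sentence ``propagating these estimates quantitatively\ldots is the delicate technical point'' is precisely where the substance lies.
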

\begin{remark} The fact that \eqv(K.1) holds for all $q\in \N$ is not needed here, but will be used in the proof of Theorem \thv(fluctuationspin0).
\end{remark}

The proof of Proposition \thv(key.1) relies on computations of moments that are combinatorially rather complex.

We introduce some convenient notation. First, we denote by  $I_N$ 
the set of all strictly increasing $p$-tupels in $\{1,\dots, N\}$,  
\be \label{ienne}
I_N \equiv\left\{ (i_1, i_2, \dots, i_p) \in \{1,\dots, N\}^p,\  i_1< i_2<\dots< i_p  \right\}.
\ee
For  $A = (i_1, \dots, i_p) \in I_N$ we write 
\be
\s_A \equiv \s_{i_1} \s_{i_2}\cdots \s_{i_p}, \text{and}\quad J_A \equiv J_{i_1, \dots, i_p}.
\ee
We abbreviate
\be
a_N \equiv  \sqrt N \binom{N}{p}^{-1/2}.
\ee
We can thus write
\be \label{fund_repr}
H_N(\s) =- a_N \sum_{A\in I_N} J_A \s_A, \qquad \text{and}\quad J_N(\b) = \frac{\b^2}{2N} a_N^2 \sum_{A \in I_N} J_A^2. 
\ee
For $a_N, b_N \geq 0$ we write $a_N \lesssim b_N$ if $a_N \leq C b_N$ for some numerical constant $C>0$. 

Finally, we will denote by $\mathfrak  c>0$ a numerical constant, not necessarily the same at different occurrences. 

\subsection{First moments of $\mathcal Z_N(\b)$ and $Z_\epsilon^\leq$, and proof 
of part (iii) of Proposition \eqv(key.1)}  \label{katrois}

We will show that 

\begin{lemma}\TH(Zmoment.1) With the notation above, 
\be \label{firstmomz}
\E\left[ \mathcal Z_N(\b)\right] = 1- \frac{\b^4}{4} N a_N^2 +\frac{\b^8}{32} N^2 a_N^4 +O\left(N^{3-2p}\right).
\ee
\end{lemma}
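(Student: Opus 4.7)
The strategy is to use Fubini to reduce $\E[\mathcal Z_N(\b)]$ to a product of one-dimensional Gaussian integrals, which become $\s$-independent thanks to $\s_A^2=1$, and then to Taylor-expand the resulting explicit expression in the small parameter $a_N^2 = O(N^{1-p})$. Starting from the representations in \eqref{fund_repr}, one writes
\be
\mathcal Z_N(\b) = \E_\s \exp\lb \b a_N \sum_{A \in I_N} J_A \s_A - \frac{\b^2 a_N^2}{2}\sum_{A \in I_N} J_A^2\rb ,
\ee
interchanges $\E$ and $\E_\s$, and uses independence of $\{J_A\}_{A \in I_N}$ under $\P$ to factor
\be
\E\left[\mathcal Z_N(\b)\right] = \E_\s \prod_{A \in I_N} \E\left[\exp\lb \b a_N \s_A J_A - \tfrac{\b^2 a_N^2}{2} J_A^2\rb\right].
\ee

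Each inner factor is then evaluated via the elementary identity $\E[\exp(cZ - \tfrac{d}{2}Z^2)] = (1+d)^{-1/2}\exp(c^2/(2(1+d)))$ for $Z \sim \mathcal N(0,1)$ and $d > -1$. Applied with $c = \b a_N \s_A$, $d = \b^2 a_N^2$, and using $\s_A^2 = 1$, every factor equals $(1+\b^2 a_N^2)^{-1/2}\exp(\b^2 a_N^2/(2(1+\b^2 a_N^2)))$ and is $\s$-independent, so $\E_\s$ trivialises. Since there are $\binom{N}{p}$ factors and $\binom{N}{p} a_N^2 = N$, this yields the closed form
\be
\E\left[\mathcal Z_N(\b)\right] = (1+\b^2 a_N^2)^{-\binom{N}{p}/2}\exp\lb \frac{N\b^2}{2(1+\b^2 a_N^2)}\rb .
\ee

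What remains is a routine Taylor expansion in $t := \b^2 a_N^2 = O(N^{1-p})$. Setting $M := \binom{N}{p}$, so that $Mt = \b^2 N$ and $Mt^k = \b^{2k} N a_N^{2(k-1)}$, the logarithm $-\tfrac{M}{2}\log(1+t) + \tfrac{Mt}{2(1+t)}$ expands, with cancellation of the linear $Mt/2$ terms, to $-\tfrac{Mt^2}{4} + \tfrac{Mt^3}{3} + O(Mt^4) = -\tfrac{\b^4 N a_N^2}{4} + O(N^{3-2p})$. Exponentiating and expanding once more yields $\E[\mathcal Z_N(\b)] = 1 - u + u^2/2 + O(N^{3-2p})$, where $u := \tfrac{\b^4 N a_N^2}{4} = O(N^{2-p})$ and $u^2/2 = \tfrac{\b^8 N^2 a_N^4}{32}$, which is exactly \eqref{firstmomz}. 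The only real subtlety lies in verifying that the cubic remainder $u^3 = O(N^{6-3p})$ is indeed absorbed by the claimed error $O(N^{3-2p})$, which holds precisely because $p\geq 3$ (and is the reason the expansion stops at the quadratic correction). Beyond this bookkeeping there is no analytic obstacle: the subtraction of $N J_N(\b)$ in the definition of $\mathcal Z_N$ is engineered so that the $J_A^2$ term cancels the leading $\exp(\b^2 N/2) = \E[Z_N(\b)]$ produced by the bare Gaussian averages, leaving only the small corrections recorded above.
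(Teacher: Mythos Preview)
Your proof is correct and follows essentially the same route as the paper: Fubini, factorisation over $A\in I_N$ by independence, the one-dimensional Gaussian identity $\E[\exp(cJ-\tfrac d2 J^2)]=(1+d)^{-1/2}\exp(c^2/(2(1+d)))$, the observation that $\s_A^2=1$ trivialises the $\E_\s$ average, and then a Taylor expansion of the resulting closed form in $\b^2 a_N^2$. Your bookkeeping of the error terms is slightly more explicit than the paper's, but the argument is the same.
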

\begin{proof}
Interchanging  the order of integration, we have
\be\label{moment1'}
\E\left[ \mathcal Z_N(\b)\right] = \E \left[\E_\s\left(\eee^{-\b H_N(\s)-NJ_N(\b)} \right) \right]=\E_\s \left(\E\left[\eee^{-\b H_N(\s)-NJ_N(\b)} \right] \right).
 \ee
 Using \eqv(fund_repr) and the independence to the $J_A$, 
\be  \label{moment1''}
\E\left[\eee^{-\b H_N(\s)-NJ_N(\b)} \right] 
= \prod_{A \in I_N} \E \left[\eee^{\b a_N J_A \s_A  -\frac{\b^2}{2} a_N^2 J_A^2}\right].
\ee
 Computing the Gaussian integral, we get
\be\label{moment11'}
\E \left[\eee^{\b a_N J_A \s_A  -\frac{\b^2}{2} a_N^2 J_A^2}\right]
=  \eee^{\left( {\frac{\b^2 a_N^2\s_{A}^2}{2\left(1+\b^2 a_N^2\right)}} \right)} 
\frac{1}{\sqrt{1+\b^2 a_N^2}}.
\ee
Since $\s_{A}^2=1$, 
this implies that 
\be\label{moment12}
\E\left[ \mathcal Z_N(\b) \right]= 
\exp\left({|I_N|\left(  \frac{\b^2 a_N^2}{2\left(1+\b^2 a_N^2\right)} - \frac{1}{2} \ln\left( 1+\b^2 a_N^2\right) \right) }\right).
\ee
Moreover, using that $\left|I_N\right|=\binom{N}{p}$, and by Taylor expansion we obtain
\bea\label{moment13}\nonumber
\E\left[ \mathcal Z_N(\b) \right]&=&\exp\left( \frac{\binom{N}{p}}{2} \left(\b^2 a_N^2- \b^4 a_N^4+O( a_N^6)-\b^2 a_N^2+ \frac{ \b^4 a_N^4}{2}\right)\right)\\
&=& 1- \frac{\b^4}{4} N a_N^2 +\frac{\b^8}{32} N^2 a_N^4 +O\left(N^{3-2p}\right),
\eea
which is  \eqref{firstmomz}. 
\end{proof}
From \eqref{firstmomz} it follows that $\ln \E\left[ \mathcal Z_N(\b)\right] =O(N a_N^2)$,
and $Na_N^2 =O(N^{2-p})$, it follows that 
$  N^{(p-2)/2 }\ln \E\left[ \mathcal Z_N(\b)\right]=O(N^{1-p/2})$, which tends to zero
for $p\geq 3$.  
The next lemma states that $Z_\epsilon^\leq$ and $\mathcal Z_N(\b)$ are exponentially close, which will imply \eqv(K.3),
\begin{lemma}\TH(veryclose.1)  For any $\e>0$, 
\be\label{diffexp}
\E \left[ \left|Z_\epsilon^\leq-\mathcal Z_N(\b)\right|\right] \leq \exp{\left(-\b^2 N \epsilon^2/2
+O(N^{2-p}) \right)}.
\ee
\end{lemma}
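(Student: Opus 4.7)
My plan is to reduce the problem to estimating $\E[Z_\epsilon^>]$ alone. Since both $Z_\epsilon^\leq$ and $Z_\epsilon^>$ are built from pointwise nonnegative integrands, we have $Z_\epsilon^\leq \leq \mathcal Z_N(\b)$ pointwise, and hence $\left| Z_\epsilon^\leq - \mathcal Z_N(\b)\right|=\mathcal Z_N(\b)-Z_\epsilon^\leq = Z_\epsilon^>$. So the task becomes to bound $\E[Z_\epsilon^>]$ by the right-hand side of \eqref{diffexp}.

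The natural tool is a Chernoff (exponential Markov) bound applied to the indicator: for any $\lambda>0$,
\be
\1_{\{-H_N(\s) > (1+\epsilon)\b N\}} \leq \eee^{\lambda(-H_N(\s) - (1+\epsilon)\b N)}.
\ee
Inserting this into the definition of $Z_\epsilon^>$, applying Fubini as in \eqref{moment1'}, and using the product representation $-H_N(\s)-NJ_N(\b)=\sum_{A\in I_N}\left(\b a_N J_A\s_A-\frac{\b^2}{2}a_N^2 J_A^2\right)$ together with the independence of the $J_A$, I would obtain
\be
\E[Z_\epsilon^>] \leq \eee^{-\lambda(1+\epsilon)\b N} \, \E_\s \prod_{A\in I_N} \E\left[\eee^{(\b+\lambda) a_N J_A \s_A - \frac{\b^2}{2} a_N^2 J_A^2}\right].
\ee
This is exactly the setup of \eqref{moment11'}, but with $\b$ replaced by $\b+\lambda$ in the linear part only. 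The single-factor Gaussian integral evaluates to $(1+\b^2 a_N^2)^{-1/2}\exp\bigl(\frac{(\b+\lambda)^2 a_N^2}{2(1+\b^2 a_N^2)}\bigr)$, which is independent of $\s$ since $\s_A^2=1$, so the $\E_\s$ drops out.

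Taking the product over the $\binom{N}{p}$ indices and expanding in Taylor series, using $\binom{N}{p} a_N^2 = N$ as in \eqref{moment13}, the $-\frac{N\b^2}{2}$ coming from the $\ln(1+\b^2 a_N^2)$ term cancels the $\frac{N\b^2}{2}$ produced by the shifted linear part, and what remains at the leading order in $a_N$ is $N\b\lambda + \frac{N\lambda^2}{2}$, with an error of order $N a_N^2 = O(N^{2-p})$. This yields
\be
\ln \E[Z_\epsilon^>] \leq -\lambda\epsilon\b N + \frac{N\lambda^2}{2} + O(N^{2-p}),
\ee
and choosing $\lambda = \epsilon\b$ gives precisely $-\b^2 N\epsilon^2/2 + O(N^{2-p})$, which is \eqref{diffexp}.

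There is really no serious obstacle here: the only point that might look delicate is ensuring that the $O(N a_N^2)$ remainder in the Taylor expansion does not hide a $\lambda$-dependence large enough to spoil the optimization. This is handled by the fact that the next-order term $\frac{N\b^2(\b+\lambda)^2 a_N^2}{2}$ has $\lambda$-dependence bounded by a constant at the optimal $\lambda=\epsilon\b$, so it is absorbed into $O(N^{2-p})$. Since the single-factor Gaussian computation is identical to the one already carried out in Lemma \ref{Zmoment.1}, the whole argument amounts to that computation with a tilted linear coefficient.
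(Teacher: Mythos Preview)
Your proof is correct, and it takes a genuinely different route from the paper's. Both start by noting $\left|Z_\epsilon^\leq-\mathcal Z_N(\b)\right|=Z_\epsilon^>$, but then diverge. The paper separates the two factors $\eee^{-\b H_N(\s)}\1_{\{\cdot\}}$ and $\eee^{-NJ_N(\b)}$ via H\"older's inequality with conjugate exponents $q_1,q_2$, bounds the truncated Gaussian moment by \textbf{Fact I}, computes $\E\bigl[\eee^{-q_2 N J_N(\b)}\bigr]$ explicitly, and finally lets $q_1\downarrow 1$. You instead keep the single Gaussian integral intact and replace the indicator by the exponential tilt $\eee^{\lambda(-H_N(\s)-(1+\epsilon)\b N)}$; the resulting expectation factorises over $A\in I_N$ into precisely the integrals of \eqref{moment11'} with the linear coefficient $\b$ shifted to $\b+\lambda$, and optimising in $\lambda$ yields the bound.

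Your approach is slightly more economical here: it avoids both H\"older and \textbf{Fact I}, and literally recycles the one-factor computation of Lemma~\ref{Zmoment.1}. The paper's decoupling via H\"older, on the other hand, is the template reused repeatedly later (e.g.\ in \eqref{mom2t}, \eqref{mom22.3}, \eqref{Markov18}), where the indicator involves only part of the Gaussian field and a direct tilt of the whole integrand would be less convenient. So your argument is cleaner for this lemma in isolation, while the paper's version sets up machinery it will need again.
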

\begin{proof}
Since $\mathcal Z_N(\b)-Z_\epsilon^\leq=Z_\epsilon^>$, we just have to control the 
expectation of the latter. 
Interchanging the order of integration, we obtain, using the Hölder inequality,
\bea\label{fm'}\nonumber
\E \left(Z_\epsilon^>\right) &=&\E_\s \left(\E\left(\eee^{-\b H_N(\s)} \1_{\{-H_N(\s)> (1+\epsilon) \b N\}}\eee^{-N J_N(\b)}\right)\right)  \\\nonumber
&\leq& \E_\s \left(\E \left(\eee^{ -q_1 \b H_N(\s)} \1_{\{-H_N(\s)> (1+\epsilon) \b N \}}\right)^{1/ q_1} \E  \left( \eee^{-q_2 N  J_N(\b)} \right)^{1/ q_2}\right)\nonumber\\
&=& \E_\s \left(\E \left(\eee^{ q_1\b \sqrt N X_\s} \1_{\{X_\s> (1+\epsilon) \b \sqrt N \}}\right)^{1/ q_1} \E  \left( \eee^{-q_2 N  J_N(\b)} \right)^{1/ q_2}\right),
\eea
 for  $1/ q_1+1/ q_2=1$.
 Classical Gaussian estimates (see {\bf Fact I} in the Appendix) yield that
\be\label{fm1}
\E \left(\eee^{ q_1\b \sqrt N X_\s} \1_{\{X_\s> (1+\epsilon) \b \sqrt N \}}\right)^{1/ q_1}
\leq \eee^{ -\frac{(1+\epsilon)^2 \b^2 N}{2 q_1}+(1+\epsilon) \b^2 N }.
\ee
Note that this bound is independent of $\s$. 
It  remains to calculate the second term on the r.h.s. of \eqref{fm'}. By independence of the $J$'s, 
\bea\label{fm3}
\E  \left( \eee^{-q_2N J_N(\b)}\right) 
&=& \left[\E  \left( \eee^{ -\frac{q_2\b^2}{2} a_N^2  J_{A}^2}\right)\right]^{\binom{N}{p}}
=\left(1+q_2\b^2a_N^2\right)^{-\frac 12 \binom{N}{p}}
\nonumber\\&=&\exp\left({- \sfrac {\binom{N}{p}}2 \ln\left(1+\sfrac{N\b^2q_2}{\binom{N}{p}}\right)}\right)
=\exp\left({- \sfrac {N\b^2 q_2}2 +O(N^{2-p})}\right).
\eea
Combining \eqv(fm1) and \eqv(fm3), we obtain, for any $q_1>1$,
\bea\label{fm3''}
\E \left(Z_\epsilon^>\right)  &\leq&
  \exp{\left(-\sfrac{(1+\epsilon)^2 \b^2 N}{2 q_1}+(1+\epsilon) \b^2 N -\sfrac{N\b^2}{2} + O\left(N^{2-p}\right)\right)}\nonumber\\
  &=& \exp\left(-\sfrac{\b^2 N}{2q_1}\left(\e^2 +(1-q_1)(1+2\e) -O(N^{1-p}) \right)\right).
  \eea
But this implies the assertion of the lemma.
\end{proof}
Note that Lemma \thv(veryclose.1) and \eqv(moment13) imply that 
\be\label{moment13.1}
\E\left[ Z_\e^\leq\right]
= 1- \frac{\b^4}{4} N a_N^2 +\frac{\b^8}{32} N^2 a_N^4 +O\left(N^{3-2p}\right).
\ee

Combining Lemma \thv(Zmoment.1) and Lemma \thv(veryclose.1) proves 
\eqv(K.3).

\subsection{The second moment of ${Z_\epsilon^{\leq}}$, and proof of part (ii) of 
Proposition \thv(key.1)} \label{kadeux} 

We set 
\be
\Xi_\epsilon \equiv \frac{{Z_{\epsilon}^{\leq}}-
\E {[Z_{\epsilon}^{\leq}]}}{\E [{Z_{\epsilon}^{\leq}}]} \,.
\ee
\eqv(K.2) is then equivalent to the following lemma.
\begin{lemma}
\TH(K.2.1)
For any $\ve>0$ and $\b<\b_p$,
\be \label{k2bis}
\lim_{N\uparrow\infty} \P\left(\left| N^{\frac{p-2}{2}} \ln\left(   1+ \Xi_\epsilon  \right) \right| \geq \ve
 \right) = 0. 
\ee
\end{lemma}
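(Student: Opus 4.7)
The plan is to derive Lemma \thv(K.2.1) from a second-moment bound on $Z_\epsilon^\leq$. Since by Lemma \thv(Zmoment.1) and Lemma \thv(veryclose.1) we have $\E[Z_\epsilon^\leq] = 1 + O(N^{2-p})$, the elementary inequality $|\ln(1+x)| \leq 2|x|$ on $\{|x| \leq 1/2\}$ reduces the claim to showing $N^{(p-2)/2}|\Xi_\epsilon| \to 0$ in probability, which by Chebyshev's inequality follows from $N^{p-2}\,\Var(Z_\epsilon^\leq) \to 0$. So the whole task becomes the variance bound $\Var(Z_\epsilon^\leq) = o(N^{2-p})$.

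For this, I would compute $\E[(Z_\epsilon^\leq)^2]$ by the same recipe as in Lemma \thv(Zmoment.1): interchange the $\sigma,\sigma'$-sum with the Gaussian integration over the couplings $\{J_A\}_{A\in I_N}$, evaluate the per-coupling integral using \eqv(moment11') but with $\sigma_A + \sigma'_A$ in place of $\sigma_A$, and use the combinatorial identity $\sum_{A\in I_N}(\sigma_A+\sigma'_A)^2 = 2|I_N|(1+f_{p,N}(R_N(\sigma,\sigma')))$. Taylor expanding the resulting $R$-independent prefactor in $a_N^2$ yields
\[
\E[(\mathcal Z_N(\b))^2] = (1+O(N^{2-p}))\, \E_{\sigma,\sigma'}\!\left[\exp\!\big(\b^2 N f_{p,N}(R(\sigma,\sigma'))(1+O(a_N^2))\big)\right],
\]
with an analogous formula for $\E[(Z_\epsilon^\leq)^2]$ in which the two truncation indicators contribute a Gaussian tail factor via a shifted-Gaussian computation as in \eqv(fm1). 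Splitting the $\sigma,\sigma'$-sum by the overlap $R$, the typical regime $|R| \leq N^{-1/2+\eta}$ contributes $1+O(N^{2-p})$, whose leading $O(N^{2-p})$ coefficient I would identify using the Krawtchouk orthogonality relations $\E_{\sigma,\sigma'}[f_{p,N}(R)]=0$ and $\E_{\sigma,\sigma'}[f_{p,N}(R)^2] = 1/\binom{N}{p}$; this matches $(\E[Z_\epsilon^\leq])^2$ from \eqv(moment13.1) exactly at the $O(N^{2-p})$ level, so the diagonal contribution to the variance is $o(N^{2-p})$.

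The main obstacle is to show that the contributions from overlaps bounded away from zero are negligible throughout the range $\b < \b_p$, which is precisely where the truncation becomes essential. Without it, the naive entropy bound $\sum_R e^{N[\b^2 |R|^p - \phi(|R|)]}$ is controlled only under the strictly stronger hypothesis $\b^2 < \inf_m \phi(m)/m^p$. By a Hölder split analogous to \eqv(fm')--\eqv(fm3'') applied to the joint exponential $e^{-\b(H_N(\sigma)+H_N(\sigma'))}$, the truncation supplies a Gaussian-tail factor $\lesssim e^{-c(1-\e)^2 \b^2 N}$; optimizing the Hölder exponent against the entropy cost $e^{-N\phi(|m|)}$ and the energetic gain $e^{N\b^2 m^p/q_1}$ is expected to reproduce the condition $\b^2 < (1+m^{-p})\phi(m)$ uniformly in $m \in (0,1)$, i.e.\ the definition of $\b_p$. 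Making this optimization rigorous---and correctly handling the joint two-replica truncation that couples the two indicators through their shared Gaussian couplings---is what I expect to be the technical heart of the proof.
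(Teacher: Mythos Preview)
Your proposal is correct and follows essentially the same route as the paper: reduce to a variance bound via Chebyshev, compute $\E[(Z_\epsilon^\leq)^2]$ by exchanging the $\s,\s'$-average with the Gaussian integration and splitting according to the overlap $R_N(\s,\s')$, Taylor-expand in the small-overlap regime using the identities $\E_{\s,\s'}[f_{p,N}(R)]=0$ and $\E_{\s,\s'}[f_{p,N}(R)^2]=\binom{N}{p}^{-1}$, and control the large-overlap regime by a H\"older split that extracts a Gaussian tail from the truncation and, after optimising the H\"older exponent, reproduces exactly the condition $\b^2<(1+m^{-p})\phi(m)$, i.e.\ $\b<\b_p$.

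The only notable difference is that the paper uses a \emph{three}-way overlap split rather than your two-way one: small overlaps $|R|<N^{-\a}$ with $1/p<\a<1/2$ (Taylor expansion), moderate overlaps $N^{-\a}\leq|R|<(\ln N)^{-1}$, and large overlaps $|R|\geq(\ln N)^{-1}$ (H\"older with truncation). The point of the intermediate regime is that there the truncation can simply be \emph{dropped}: since $m^p\ll m^2$ for small $m$, the entropy $\eee^{-N\phi(m)}$ already dominates the energy $\eee^{N\b^2 m^p}$ without any help from the indicator. This cleanly separates the two difficulties you anticipate---the Taylor-expansion bookkeeping and the H\"older optimisation---and avoids having to run the H\"older argument uniformly down to overlaps of order $N^{-1/2+\eta}$. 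The paper also handles the ``coupled indicators'' issue you flag by two simple tricks depending on the regime: for large overlap it bounds $\1_{\{X_\s\leq\cdot\}}\1_{\{X_{\s'}\leq\cdot\}}\leq\1_{\{X_\s+X_{\s'}\leq\cdot\}}$ before applying H\"older, while for the residual piece at small overlap it writes $X_{\s'}=\g X_\s+\sqrt{1-\g^2}\,\xi$ with $\g=f_{p,N}(R)=o(1)$ to decouple the two indicators.
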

\begin{proof} Using the Chebyshev inequality and the fact that, for $|x|\leq  1/10$,  
$(\eee^x- 1)^2\geq x^2/2$, for $N$ large enough, 
\be\Eq(nownumber.1)
 \P\left(\left| N^{\frac{p-2}{2}} \ln\left(   1+ \Xi_\epsilon  \right) \right| \geq \ve \right) 
\leq  \frac {\E\left[\Xi_\e^2\right]}{\left(\eee^{\ve N^{1-p/2}}-1\right)^2}
+ \frac {\E\left[\Xi_\e^2\right]}{\left(\eee^{-\ve N^{1-p/2}}-1\right)^2}
\leq 8\ve^{-2} N^{p-2}{\E\left[\Xi_\e^2\right]}.
\ee
Since $\E\left[\Xi_\e^2\right]=\frac{\E\left[(Z^\leq_\e)^2\right]-\E\left[Z^\leq_\e\right]^2}
{\left(\E\left[Z^\leq_\e\right]\right)^2}$, and $\E\left[\Z^\leq_\e\right]$ is already computed, we only need to compute a precise bound on the second moment of $Z_\e^\leq$.

We write  $\E_{\s,\s'}=\E_\s \E_{\s'}$ and set $\Gamma_N \equiv \{ -1,-1-\frac{2}{N}, \dots ,1\}$. Then, for any function 
 $G: \R \to \R$, one has  
\bea\label{switch2}\nonumber
\E_{\s,\s'}\left[ G\left({\binom{N}{p}}^{-1}\sum_{i_1<i_2<\dots<i_p}\s_{i_1}\s_{i_1}' \dots \s_{i_p}\s_{i_p}' \right)\right]& =&\E_{\s,\s'}\left[ G\left(\cov(X_\s,X_{\s'})\right)\right]\\
&=&\sum_{m \in \Gamma_N} G[f_{p,N}\left(m\right)] p_N(m),
\eea
where $p_N(m) \equiv \P_{\s,\s'}( R_N(\s,\s')=m)$. 

 With this in mind, we split the second moment
  according to the value of the overlap
\bea\label{mom21'}
\E \left[\left(Z_{\epsilon}^{\leq}\right)^2\right]
&= &\E_{\s,\s'} \E \left(\eee^{\b \sqrt N \left(X_\s+X_{\s'}\right)} \1_{\{X_\s \leq(1+\epsilon) \b \sqrt N\}}  \1_{\{X_\s' \leq(1+\epsilon) \b \sqrt N\}}  \eee^{-2NJ_N(\b)} \right)\nonumber\\
 &=& \E_{\s,\s'} \E \left(\eee^{\b \sqrt N \left(X_\s+X_{\s'}\right)} 
 \1_{\{X_\s \leq(1+\epsilon) \b \sqrt N\}}  \1_{\{X_\s' \leq(1+\epsilon) \b \sqrt N\}} 
\eee^{-2NJ_N(\b)} \1_{\{ \lvert R_N(\s,\s') \rvert<(\ln N)^{-1}\}}  \right)\nonumber\\\nonumber
&&+\E_{\s,\s'} \E \left(\eee^{\b \sqrt N \left(X_\s+X_{\s'}\right)} 
\1_{\{X_\s \leq(1+\epsilon) \b \sqrt N\}}  \1_{\{X_\s' \leq(1+\epsilon) \b \sqrt N\}} 
 \eee^{-2NJ_N(\b)} \1_{\{ \lvert R_N(\s,\s') \rvert \geq (\ln N)^{-1}\}} \right)\\
&&\equiv A+B.
\eea
 We will  now prove that the $B$-term (large overlap) is subexponentially small and compute 
 the leading orders of the $A$-term.

\begin{lemma}\TH(B.1)
For all $\b<\b_p$, there exists $\e_0>0$  and a constant $\mathfrak c$ such that, for all
$0\leq \e<\e_0$, 
\be \label{subsm}
B \leq \exp\left( - \mathfrak c N \ln(N)^{-p} \right).
\ee
\end{lemma}

\begin{lemma}\TH(A.1)
For any $\b$, 
\be \label{polysm}
A =  1-\frac{\b^4N a_N^2}{2}  +O \left( N^{3-3p/2}\right).
\ee
\end{lemma}

\begin{proof}[Proof of Lemma \thv(B.1)]
 To simplify the notation, set
$b_N=(\ln N)^{-1}$ and  $B_N \equiv \{ \lvert R_N(\s,\s') \rvert \geq b_N\} $.
 We simplify the constraints by using that 
 \be
 \1_{\{X_\s \leq(1+\epsilon) \b \sqrt N\}}  \1_{\{X_\s' \leq(1+\epsilon) \b \sqrt N\}} 
\leq\1_{\{X_\s+X_{\s'} \leq 2(1+\epsilon) \b \sqrt N\}}.
\ee
 By H\"older's inequality, we then get 
\be\label{mom2t}
 B \leq \E_{\s,\s'} \bigg(  \E  \left(\eee^{q_1 \b \sqrt N \left(X_\s+X_{\s'}\right)} \1_{\{X_\s+X_\s'\leq(1+\epsilon) 2 \b \sqrt N\}}\right)^{\frac{1}{q_1}} \E \left(\eee^{-2q_2NJ_N(\b)} \right)^{\frac{1}{q_2}}  \1_{B_N} \bigg),
\ee
with $q_1, q_2 \geq 1$ satisfying $1/ q_1+1/ q_2=1$. 
Since $X_\s+X_{\s'}$ is a Gaussian random variable with mean zero and 
variance $2(1+f_{p,N}(R_N(\s,\s'))$, 
the right hand side can be written as 
\be\label{mom3t}
 \E_{\s,\s'} \left( \E  \left(\eee^{q_1 \b \sqrt{N\left(2+2 f_{p,N} \left(R_N(\s,\s')\right)\right)} \xi } \1_{\left\{\xi \leq(1+\epsilon) \sqrt 2 \b \sqrt N {\left(1+ f_{p,N} \left(R_N(\s,\s')\right)\right)^{-1/2}}\right\}}\right)^{\frac{1}{q_1}}   \E\left(\eee^{-2q_2 NJ_N(\b)} \right)^{\frac{1}{q_2}}   \1_{B_N}  \right),
\ee
where $\xi$ is a standard Gaussian. As in \eqref{fm3},
\be\label{mom3t'}
\E\left(\eee^{-2q_2 NJ_N(\b)} \right)^{\frac{1}{q_2}}  \leq \eee^{-\b^2 N+ O\left( N^{2-p}\right)},
\ee
and the first term is bounded as in \eqv(fm1), which yields
\bea
\Eq(asbefore.1)
\nonumber
&& \E  \left(\eee^{q_1 \b \sqrt{N\left(2+2 f_{p,N} \left(R_N(\s,\s')\right)\right)} \xi } \1_{\left\{\xi <(1+\epsilon) \sqrt 2 \b \sqrt N {\left(1+ f_{p,N} \left(R_N(\s,\s')\right)\right)^{-1/2}}\right\}}\right)^{\frac{1}{q_1}} 
 \\
 &&\quad\leq 
 \exp\left({-\frac{(1+\epsilon)^2 \b^2 N}{q_1\left(1+ f_{p,N} \left(R_N(\s,\s')\right)\right)}+(1+\epsilon) 2 \b^2 N}\right).
 \eea
Combining these two steps, we obtain 
\be\label{mom2t1}
 B \leq  \E_{\s,\s'} \bigg(   \1_{B_N}\exp{\left(-\frac{(1+\epsilon)^2 \b^2 N}{q_1\left(1+
  f_{p,N} \left(R_N(\s,\s')\right)\right)}+(1+\epsilon) 2 \b^2 N -\b^2 N+ O\left( { N^{2-p}} \right)\right)}\bigg). 
\ee
Since this holds for all $q_1>1$, one sees that the exponential term in \eqv(mom2t1) is 
bounded by
\be\label{mom4t}
\exp{\left(-\b^2 N \left( \frac{\epsilon^2-(1+2\epsilon)f_{p,N} \left(R_N(\s,\s')\right)}{\left(1+ f_{p,N} \left(R_N(\s,\s')\right)\right)}+ O\left( N^{1-p} \right) \right)\right)}.
\ee
By Stirlings estimate, we have
\be\label{mom5t}
p_N(m)= \binom{N}{N\frac{(1+m)}{2}} 2^{-N} \leq \exp{\left(-N\phi (m)\right)}.
\ee
Using \eqref{switch2} and plugging \eqref{mom4t} and \eqref{mom5t} into \eqref{mom2t1} gives
\be\label{mom6t}
B \lesssim \sum_{\substack{m \in \Gamma_N, \\ \lvert m \rvert \geq b_N}} \exp{ \left( N\left( -\b^2 \frac{\epsilon^2-(1+2\epsilon)f_{p,N} \left(m \right)}{\left(1+ f_{p,N} \left(m\right)\right)} -\phi (m)\right)\right)}.
\ee
 We write
\bea \label{deltaenne}\nonumber
\d_N & \equiv& -\b^2 N \frac{\epsilon^2-(1+2\epsilon)f_{p,N} \left(m \right)}{\left(1+ f_{p,N} \left(m\right)\right)} -\phi (m) N \\\nonumber
& =& -\b^2 N \frac{\epsilon^2}{1+f_{p,N}(m)} 
+ \frac{f_{p,N}(m) N}{1+ f_{p,N}(m)} \left[ \b^2(2\epsilon +1) - \left( 1+ f_{p,N}(m)^{-1}\right) \phi(m) \right] \\
& \leq& \frac{f_{p,N}(m) N}{1+ f_{p,N}(m)}  \left[\b^2(2\epsilon +1)  - \left( 1+ f_{p,N}(m)^{-1}\right) \phi(m) \right].
\eea 
If $p$ is even or $m\geq 0$, 
recalling that $\b_p^2\equiv \inf_{0<m<1}(1+m^{-p})\phi (m)$, the last line in \eqv(deltaenne) is 
\be
\nonumber\\\leq  \frac{f_{p,N}(m) N}{1+ f_{p,N}(m)}  \left[\b^2(2\epsilon +1)  - \b_p^2 \right].
\ee
 Since $f_{p,N}(m) =m^p+O(1/N)$,  for $|m|\geq b_N$, 
\be
\d_N\leq -\frac {N b_N^p+O(1)}{2} (\b_p^2-\b^2(1+2\e)). \label{basta} 
\ee
This gives 
\be
B\lesssim N \eee^{ -\frac {N b_N^p+O(1)}{2} (\b_p^2-\b^2(1+2\e))}.
\ee
If $p$ is odd and  $m<0$, $1+f_{p,N}(m)^{-1}\leq 0$, and 
 we immediately obtain
\be
\d_N \leq -Nb_N^p \b^2,
\ee
which is even better. This proves Lemma \thv(B.1)
\end{proof}

Next we prove Lemma \thv(A.1)

\begin{proof}[Proof of Lemma \thv(A.1)] 
We have to decompose the term $A$ further according to the value of the overlap. 
For $\a$ satisfying 
\be \label{alphaa}
\frac{1}{p}<\alpha <\frac{1}{2},
\ee 
we set $A=A_1+A_2$, where
\be \label{mom22}
A_1  \equiv \E_{\s,\s'} \E \left(\eee^{\b \sqrt N \left(X_\s+X_{\s'}\right)} \1_{\{X_\s \leq(1+\epsilon)  \b \sqrt N\}} \1_{\{X_{\s'} \leq(1+\epsilon)  \b \sqrt N\}} \eee^{-2NJ_N(\b)} \1_{N^{-\a}\leq |R_N(\s,\s')|<b_N} 
 \right),
 \ee
 and 
 \be\Eq(mom22.1)
A_2 \equiv \E_{\s,\s'} \left( \E \left(\eee^{\b \sqrt N \left(X_\s+X_{\s'}\right)} 
\1_{\{X_\s \leq(1+\epsilon)  \b \sqrt N\}} \1_{\{X_{\s'} \leq(1+\epsilon)  \b \sqrt N\}}\eee^{-2NJ_N(\b)}  \right) \1_{|R_N(\s,\s')|<N^{-\a}}   \right).
\ee
The point is that  $A_1$ is very small, even if we drop the constraints on $X_\s$ and $X_{\s'}$, whereas $A_2$ has to be computed precisely. 

Thus, we bound $A_1$ by
\be
\Eq(plop.1)
0\leq A_1\leq\E_{\s,\s'} \E \left(\eee^{\b \sqrt N \left(X_\s+X_{\s'}\right)}  \eee^{-2NJ_N(\b)} \1_{\{N^{-\a}\leq |R_N(\s,\s')|<b_N\}}\right) .
\ee
 Using the independence of the Gaussian variables
 \be \label{mom23}
\E \left(\eee^{\b \sqrt N \left(X_\s+X_{\s'}\right)} \eee^{-2NJ_N(\b)}  \right)=\prod_{K \in I_N} \E \left(\eee^{ \b a_N  J_{K}\left(\s_{K}+\s_{K}'\right) -\b^2 a_N^2  J_{K}^2} \right).
 \ee
 Computing the Gaussian integrals,
 \be
  \E \left(\eee^{ \b a_N  J_{K}\left(\s_{K}+\s_{K}'\right) -\b^2 a_N^2  J_{K}^2} \right)
 =\eee^{\left(1+\s_{K} \s_{K}'\right) \left(\frac{\b^2a_N^2}{2\b^2 a_N^2+1}\right)-\frac{\ln(1+2 \b^2 a_N^2)}{2}},
 \ee
 and so 
 \be \label{mom24'}
\E \left(\eee^{\b \sqrt N \left(X_\s+X_{\s'}\right)} \eee^{-2NJ_N(\b)}  \right)
= \eee^{\left( \sum_{K \in I_N} \s_{K}\s_{K}'\right) \left(\frac{\b^2a_N^2}{2 \b^2 a_N^2+1}\right)}\eee^{\binom{N}{p} \left(\frac{\b^2 a_N^2}{2\b^2 a_N^2+1} -\frac{\ln(1+2 \b^2 a_N^2)}{2}\right)}.
\ee
As in \eqv(fm3), we have
\be \label{mom25}
\exp\left({\binom{N}{p} \left(\frac{\b^2 a_N^2}{2\b^2 a_N^2+1} -\frac{\ln(1+2 \b^2 a_N^2)}{2}\right)}\right)=\exp{\left( - \b^4 N a_N^2+O\left( {N^{3-2p}}\right)\right)}.
\ee
Thus 
\be\label{facile}
A_1 \leq \sum_{\substack{m \in \Gamma_N \\ N^{-\alpha} \leq \lvert m \rvert \leq b_N}}  \exp\left( \frac{\b^2 N f_{p,N} \left( m \right)} {2\b^2 a_N^2+1} \right) p_N(m)
\leq \sum_{\substack{m \in \Gamma_N \\ N^{-\alpha} \leq \lvert m \rvert \leq b_N}}  \exp\left(N\left( \sfrac{\b^2 f_{p,N} \left( m \right)} {2\b^2 a_N^2+1}-\sfrac{m^2}2\right) \right),
\ee
where the last inequality uses \eqref{mom5t}.  
Using the asymptotics for $f_{p,N}$,  we get that 
\be\label{facile2'}
A_1 \leq \sum_{\substack{m \in \Gamma_N \\ N^{-\alpha} \leq \lvert m \rvert \leq b_N}}  \exp { \left( N \frac{m^2}{2}  \left(\frac{2 \b^2 m^{p-2}\left(1+o_N(1)\right)}{2 \b^2 a_N^2+1}-1 \right)\right)}.
\ee
In the range of summation, 
\be
\left|\frac{2 \b^2 m^{p-2}\left(1+o_N(1)\right)}{2 \b^2 a_N^2+1}\right|\lesssim b_N^{p-2},
\ee 
which tends to zero,
and  thus, using also the lower bound on $|m|$,  
\be\label{facile2}
A_1  \lesssim  N \exp\left(-N^{1-2\alpha}/2\right) .
\ee

For $A_2$, the constraints on the $X_\s,X_{\s'}$ can also be dropped, but this is more subtle. We write $A_2=A_{21}+R_2$, where 
\be\Eq(mom22.2)
A_{21} \equiv \E_{\s,\s'} \left( \E \left(\eee^{\b \sqrt N \left(X_\s+X_{\s'}\right)} 
\eee^{-2NJ_N(\b)}  \right) \1_{|R_N(\s,\s')|<N^{-\a}}   \right).
\ee
We first compute $A_{21}$.
We set
$\Gamma_N^{\alpha} \equiv \{m \in \Gamma_N, \lvert m \rvert \leq N^{-\alpha}\}$.
Using \eqref{switch2}, we have
\be \label{mom24}
A_{21}\exp{\left( + \b^4 N a_N^2+O\left( {N^{3-2p}}\right)\right)}\equiv \wt A_{21}= \sum_{m \in \Gamma^\a_N}  \exp{\left( \frac{\b^2 N f_{p,N} \left( m \right) } {2\b^2 a_N^2+1}\right)} p_N\left(m\right).
\ee
To deal with this term, we use the following standard bound for the exponential,
\be\Eq(lessstrange)
\left|\exp( \xi) - 1 - \xi  - \frac{1}{2}\xi^2-\frac{1}{3!}\xi^3 \right|\leq \frac{1}{4!}\xi^4 \exp |\xi|,
\ee
with $\xi=\frac{\b^2 N f_{p,N} \left( m \right) } {2\b^2 a_N^2+1}$.
Notice that on $\G^\a_N$, $N f_{p,N}(m)\leq  N^{1-p\a}$, which tends to zero, as $N\uparrow \infty$. 
Hence, on the domain of summation of \eqv(mom24), $\exp(|\xi|)\leq \eee^\mathfrak c$. 
This allows us to bound $A_{21}$ as 
\be
\Eq(strange)
\left|\wt A_{21}-\sum_{m \in \Gamma^\a_N}  \left(1+\xi  + \frac{1}{2}\xi^2+\frac{1}{3!}\xi^3 \right
)p_N(m)\right|
\leq \frac{1}{4!}\sum_{m \in \Gamma^\a_N}\xi^4 \eee^{\mathfrak c}p_N(m).
\ee
Moreover, the sum over the terms on the left-hand side can be extended to sums over all
 of $\G_N$ with just an exponentially small error. 
\be
\Eq(strange.2)
\left|\wt A_{21}-\sum_{m \in \Gamma_N}  \left(1+\xi  + \frac{1}{2}\xi^2+\frac{1}{3!}\xi^3 \right
)p_N(m)\right|
\leq \frac{1}{4!}\sum_{m \in \Gamma_N}\xi^4 \eee^{\mathfrak c}p_N(m) +O\left(\eee^{-N^{1-2\a}}\right).
\ee

The  sums over the $\xi^k$ can be computed fairly well by re-expressing them in terms of expectations over the $\s$. 
Namely
\be\Eq(goodbound.1)
 \sum_{m \in \Gamma_N} f_{p,N}(m)p_N(m)=\binom{N}{p}^{-1}\E_{\s,\s'}\left( \sum_{A\in I_N} 
 \s_A\s'_A\right) =0,
 \ee
 \be\Eq(goodbound.2)
 \sum_{m \in \Gamma_N} f_{p,N}(m)^2p_N(m)=\binom{N}{p}^{-2}\E_{\s,\s'}\left( \sum_{A\in I_N} 
 \s_A\s'_A\right)^2 =\binom{N}{p}^{-1}, 
 \ee
 and, for $k\geq 3$
 \be\Eq(badbound.2)
 \sum_{m \in \Gamma_N} f_{p,N}(m)^kp_N(m)=\binom{N}{p}^{-k}\E_{\s,\s'}\left( \sum_{A\in I_N} 
 \s_A\s'_A\right)^k \leq \binom{N}{p}^{-k}N^{pk/2},
 \ee
 since all indices must occur alt least twice.
From this we obtain 
\be\Eq(badbound.1)
\wt A_{21}= 1+ \frac{\b^4Na_N^2}{2(1+2\b^2a_N^2)^2} +O\left(N^{3(1-p/2)}\right)
=1+ \frac{\b^4Na_N^2}{2} +O\left(N^{3(1-p/2)}\right).
\ee
Finally, we bound $R_2$. Note that 
\be\Eq(mom22.3)
|R_2|\leq 2 \E_{\s,\s'} \left( \E \left(\eee^{\b \sqrt N \left(X_\s+X_{\s'}\right)} 
\1_{\{X_\s >(1+\epsilon)  \b \sqrt N\}} \eee^{-2NJ_N(\b)}  \right) \1_{|R_N(\s,\s')|<N^{-\a}}   \right).
\ee
The idea here is that under the constraint on $R_N(\s,\s')$, $X_\s$ and $X_{\s'}$ 
are almost independent. Using Hölder's inequality as before,
\bea\nonumber
 \E \left(\eee^{\b \sqrt N \left(X_\s+X_{\s'}\right)} 
\1_{\{X_\s >(1+\epsilon)  \b \sqrt N\}} \eee^{-2NJ_N(\b)}  \right)
&\leq& \left(\E \left(\eee^{q_1\b \sqrt N \left(X_\s+X_{\s'}\right)}
\1_{\{X_\s >(1+\epsilon)  \b \sqrt N\}} \right)\right)^{\frac 1{q_1}}
\\
&&\quad\times\left(\E \left( \eee^{-2q_2NJ_N(\b)}  \right)\right)^{\frac 1{q_2}}.
\eea
As in \eqv(fm3),  we get for the second factor
\be
\left(\E \left( \eee^{-2q_2NJ_N(\b)}  \right)\right)^{\frac 1{q_2}}
\leq \eee^{-N\b^2 +O(N^{2-p})}.
\ee
To deal with with first factor, we 
notice that $X_{\s'}$ can be written as
\be
X_{\s'}= \g X_\s +\sqrt {1-\g^2} \xi,
\ee
where $\xi $ is a normal random variable 
 independent of $X_\s$ and $\g = f_N(R_N(\s,\s'))$. 
Hence
\be
\E \left(\eee^{q_1\b \sqrt N \left(X_\s+X_{\s'}\right)} 
\1_{\{X_\s >(1+\epsilon)  \b \sqrt N\}} \right)=
\E \left(\eee^{q_1\b \sqrt N X_\s(1+\g)} 
\1_{\{X_\s >(1+\epsilon)  \b \sqrt N\}} \right)
\E \left(\eee^{q_1\b \sqrt N \sqrt{1-\g^2}\xi}\right).
\ee
Using again {\bf Fact 1} and 
since $|R_N(\s,\s')|\leq N^{-\a}$, and that  these bounds hold for all $q_1>1$, 
it follows that
\be
|R_2|\leq   \eee^{-\b^2 N \e^2/2 + o(N)}.
\ee 
With these bounds on $A_1$ and $A_2$, and the bound \eqv(mom25), 
\bea\nonumber
A&=&\left(1+ \frac{\b^4Na_N^2}{2(1+2\b^2a_N^2)^2} +O\left(N^{3(1-p/2)}\right)\right)
\exp{\left( - \b^4 N a_N^2+O\left( {N^{3-2p}}\right)\right)}\\
&=&1-\frac{\b^4Na_N^2}{2} +O\left(N^{3(1-p/2)}\right).
\eea
This implies  \eqv(polysm)
 and concludes the proof of Lemma \thv(A.1).
\end{proof}
We now conclude the proof of Proposition \thv(key.1). Combining  \eqref{subsm} and \eqref{polysm} yields
\be\label{finito4'}
\E\left[(Z_{\epsilon}^{\leq})^2\right] =  1-\frac{\b^4N a_N^2}{2}+O\left(N^{3-3p/2}\right).
\ee
Furthermore, using \eqref{moment13.1} we have that
\be\label{finito5}
\left(\E(Z_{\epsilon}^{\leq})\right)^2=\left(1- \frac{\b^4}{4} N a_N^2 +O\left(N^{4-2p}\right)\right)^2=1- \frac{\b^4 N a_N^2}{2}+O\left(N^{4-2p}\right),
\ee
hence combining \eqref{finito4'} and \eqref{finito5} leads to 
\be\label{finito6}
 \E\left( \Xi_\epsilon^2\right)=\frac{\E ({Z_{\epsilon}^{\leq}}^2)-
 \E ({Z_{\epsilon}^{\leq}})^2}{\E ({Z_{\epsilon}^{\leq}})^2}=\frac{O\left(N^{3-3p/2}\right)}
 {\E ({Z_{\epsilon}^{\leq}})^2} .
\ee
Inserting this into \eqv(nownumber.1), we get
\be
 \P\left(\left| N^{\frac{p-2}{2}} \ln\left(   1+ \Xi_\epsilon  \right)\right|>\ve\right) \leq 8\ve^{-2} O\left(N^{1-p/2}\right),
 \ee
which proves Lemma \thv(K.2.1). 
\end{proof} 
This also concludes the proof of part (ii) of Proposition \thv(key.1).

\subsection{Exponential concentration: proof of  (i) of Proposition \thv(key.1)} \label{kaun}  
Since
\be
N^{q} \ln\left(\frac{\mathcal{Z}_N(\b) }{{Z_{\epsilon}^{\leq}}}\right)=N^{q} \ln\left(\frac{\mathcal{Z}_N(\b) }{\mathcal{Z}_N(\b) -Z_{\epsilon}^{>}}\right)=-N^{q} \ln\left(1-\frac{Z_{\epsilon}^{>}}{\mathcal{Z}_N(\b) }\right),
\ee
the assertion \thv(K.1) in Lemma \thv(key.1) follows from the following lemma.
\begin{lemma}\TH(conc.1) Assume that $\b<\b_p$. Then, 
 For all  $ \ve>0$ there exists $\mathfrak c > 0$ such that 
\be \label{exfa}
\P\left( \frac{Z_{\epsilon}^{>}}{\mathcal{Z}_N(\b) } \geq \ve  \right) \leq \exp( -\mathfrak c N).
\ee
\end{lemma}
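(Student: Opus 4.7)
The plan is to bound the numerator $Z_\epsilon^{>}$ from above and the denominator $\mathcal Z_N(\b)$ from below, each up to exponentially small failure probability, and then take the ratio on the intersection of the two good events.

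For the numerator, Lemma \thv(veryclose.1) together with $Z_\epsilon^{>}=\mathcal Z_N(\b)-Z_\epsilon^{\leq}\geq 0$ gives $\E[Z_\epsilon^{>}]\leq\exp(-\b^2\e^2 N/2+O(N^{2-p}))$, and Markov's inequality then yields, for $N$ large enough,
\be
\P\bigl(Z_\epsilon^{>}\geq \exp(-\b^2\e^2 N/4)\bigr)\leq \exp(-\b^2\e^2 N/5).
\ee

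For the denominator, I would write $N^{-1}\ln\mathcal Z_N(\b)=F_N(\b)-J_N(\b)$ and argue that both terms concentrate exponentially around $\b^2/2$. A direct computation gives $\partial_{J_A}\ln Z_N(\b)=\b a_N\langle\s_A\rangle_\b$, so $\|\nabla\ln Z_N(\b)\|_2\leq \b a_N\sqrt{|I_N|}=\b\sqrt N$, i.e.\ $F_N(\b)$ is $(\b/\sqrt N)$-Lipschitz in the Gaussian couplings. Borell--Sudakov--Tsirelson Gaussian concentration then yields
\be
\P\bigl(|F_N(\b)-\E F_N(\b)|\geq t\bigr)\leq 2\exp\bigl(-Nt^2/(2\b^2)\bigr),
\ee
and since $\b<\b_p$ the quenched--annealed identity ensures $\E F_N(\b)\to\b^2/2$. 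On the other hand $J_N(\b)$ is an average of $\binom{N}{p}$ i.i.d.\ scaled $\chi^2_1$ variables, so a standard Bernstein--Laurent--Massart type bound gives super-exponential concentration of $J_N(\b)$ around $\b^2/2$. Combining these two ingredients, for every $\d>0$ there exists $\mathfrak c(\d)>0$ such that
\be
\P\bigl(\mathcal Z_N(\b)\leq \exp(-\d N)\bigr)\leq \exp(-\mathfrak c(\d)N).
\ee

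Fixing $\d<\b^2\e^2/8$, on the intersection of the two good events one has $Z_\epsilon^{>}/\mathcal Z_N(\b)\leq \exp(-(\b^2\e^2/4-\d)N)$, which is eventually smaller than any prescribed $\ve>0$, while the complementary event carries probability at most $2\exp(-\mathfrak c N)$ for some $\mathfrak c>0$. The main subtlety is obtaining the exponential lower tail for $\mathcal Z_N(\b)$: the Chebyshev bound extracted from \eqv(finito6) only gives polynomial concentration, so one must genuinely invoke Gaussian concentration for $F_N$, which in turn rests on the non-trivial input that $\E F_N(\b)\to\b^2/2$ throughout the whole regime $\b<\b_p$.
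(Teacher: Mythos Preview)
Your argument is correct; the logic of bounding numerator and denominator on separate high-probability events is sound, and the ingredients you invoke (Lemma \thv(veryclose.1) together with Markov for the numerator, Gaussian concentration for $F_N(\b)$ plus $\chi^2$-type concentration for $J_N(\b)$ for the denominator, combined with $\E F_N(\b)\to\b^2/2$ for $\b<\b_p$) indeed deliver the claim.

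The paper proceeds a bit differently. Since the factor $\eee^{-NJ_N(\b)}$ appears in both $Z_\epsilon^{>}$ and $\mathcal Z_N(\b)$, it cancels in the ratio, so one is really estimating $\E_\s\bigl(\eee^{-\b H_N(\s)}\1_{\{-H_N(\s)>(1+\epsilon)\b N\}}\bigr)\big/\E_\s\bigl(\eee^{-\b H_N(\s)}\bigr)$. The paper applies Markov to this ratio directly and splits the $\E$-expectation according to whether the Gaussian concentration event $\{|\ln\E_\s\eee^{-\b H_N(\s)}-\E\ln\E_\s\eee^{-\b H_N(\s)}|\leq N\b^2\e^2/4\}$ holds; on the bad event the ratio is trivially $\leq 1$, and on the good event the denominator is bounded below via the quantitative quenched--annealed comparison $\E\ln\E_\s\eee^{-\b H_N(\s)}-\ln\E\E_\s\eee^{-\b H_N(\s)}\geq -K\sqrt N$. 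This avoids any separate treatment of $J_N(\b)$, and uses a sharper form of the quenched--annealed input than the mere convergence $\E F_N(\b)\to\b^2/2$ that you invoke. Your route trades that observation for an extra (but elementary) $\chi^2$-concentration step; both lead to the same conclusion with comparable effort.
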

\begin{proof}

\bea\label{Markov}
\P\left( \frac{Z_{\epsilon}^{>}}{\mathcal{Z}_N(\b) }\geq \ve \right) &=&\P\left(\frac{\E_\s\left(\eee^{-\b H_N(\s)} \1_{\{-H_N(\s)> (1+\epsilon) \b N\}}\right)}{\E_\s\left(\eee^{-\b H_N(\s)} \right)}  \geq \ve \right)\nonumber\\
& \leq &\frac{1}{\ve} \E \left(\frac{\E_\s\left(\eee^{-\b H_N(\s)} \1_{\{-H_N(\s)> (1+\epsilon) \b N\}}\right)}{\E_\s\left(\eee^{-\b H_N(\s)} \right)} \right).
\eea 

\noindent By Gaussian concentration of measure, it follows that 
\be\label{conc}
\P\left(\left|\ln \E_{\s}\eee^{-\b H_N(\s)}-\E \left( \ln \E_{\s}\eee^{-\b H_N(\s)}\right)\right|
>N\b^2 \frac{\epsilon^2}{4} \right)\leq \exp\left(-N\b^2 \frac{\epsilon^4}{32}\right).
\ee
(See e.g. \cite[(2.56)]{BKL}). 

We introduce the events 
\be
O_{N, \b , \epsilon} \equiv \left \{ \left|\ln \E_{\s}\eee^{-\b H_N(\s)}-\E\left(\ln\E_{\s}\eee^{-\b H_N(\s)}\right)\right|>N\b^2\sfrac{\epsilon^2}{4}\right\}\,,
\ee
and split the r.h.s. of \eqref{Markov} as 
 \bea\label{tricks1'}
 &&\E \left(\frac{\E_\s\left(\eee^{-\b H_N(\s)} \1_{\{-H_N(\s)> (1+\epsilon) \b N\}}\right)}{\E_\s\left(\eee^{-\b H_N(\s)} \right)} \right)\\
\nonumber &&  \leq  \E \left( \1_{ O_{N, \b , \epsilon}^c} \frac{\E_\s\left(\eee^{-\b H_N(\s)} \1_{\{-H_N(\s)> (1+\epsilon) \b N\}}\right)}{\E_\s\left(\eee^{-\b H_N(\s)} \right)} \right)+\P(O_{N, \b , \epsilon}) \\
\nonumber&&\leq \, \E \left( \1_{ O_{N, \b , \epsilon}^c } \frac{\E_\s\left(\eee^{-\b H_N(\s)} \1_{\{-H_N(\s)> (1+\epsilon) \b N\}}\right)}{\E_\s\left(\eee^{-\b H_N(\s)} \right)} \right)+ \exp\left({-N\b^2 \frac{\epsilon^4}{32}}\right) \, ,
\eea 
where  for the first inequality we use that the quotient of the $\E_\s$-terms is smaller than one, and \eqref{conc} is used in the last step. 
On the event $O_{N, \b , \epsilon}^c$ , we have that 
\bea\Eq(zero.101)
\E_\s\left(\eee^{-\b H_N(\s)}\right) &=& \exp\left(\ln \E_\s\left(\eee^{-\b H_N(\s)}\right) -\E\left(\ln \E_\s\left(\eee^{-\b H_N(\s)}\right)\right)
+\E\left(\ln \E_\s\left(\eee^{-\b H_N(\s)}\right)\right)\right)\nonumber\\
&\geq&
\exp\left(\E\left(\ln \E_\s\left(\eee^{-\b H_N(\s)}\right)\right)-N\b^2\e^2/4\right).
\eea
Using this inequality
\be\label{tricks}
\E \left( \1_{\{\{O^{N}_{\b,\epsilon}\}^C\}} \frac{\E_\s\left(\eee^{-\b H_N(\s)} \1_{\{-H_N(\s)> (1+\epsilon) \b N\}}\right)}{\E_\s\left(\eee^{-\b H_N(\s)} \right)} \right)\\
 \leq \eee^{N\b^2 \frac{\epsilon^2}{4}} \frac{ \E\left( \E_{\s}\left(\eee^{-\b H_N(\s)-N\frac{\b^2}{2}}\1_{\{-H_N(\s)>N\b(1+\epsilon)\}}\right) \right)}{\exp\left(
\E\ln\E_{\s}\eee^{-\b H_N(\s)-N\frac{\b^2}{2}}\right)}.
\ee
By classical Gaussian estimates ({\bf Fact I} in Appendix), the numerator on the r.h.s. above reads
\be\label{d1'}
\E\left(\E_\s\left(\eee^{-\b H_N(\s)-N\frac{\b^2}{2} } \1_{\{-H_N(\s)> (1+\epsilon) \b N\}}\right)\right)\leq  \exp\left({-N \b^2 \frac{\epsilon^2}{2}}\right).
\ee
Combining \eqref{tricks1'}, \eqref{tricks} and \eqref{d1'}, we obtain 
\be\label{tricksf}
 \E \left(\frac{\E_\s\left(\eee^{-\b H_N(\s)} \1_{\{-H_N(\s)> (1+\epsilon) \b N\}}\right)}{\E_\s\left(\eee^{-\b H_N(\s)} \right)} \right) \leq \frac{\exp\left({N\b^2 \frac{\epsilon^2}{4}}\right)\exp\left(-{N\b^2 \frac{\epsilon^2}{2}}\right)}{\exp\left(
\E\ln\E_{\s}\eee^{-\b H_N(\s)-N\frac{\b^2}{2}}\right)}+ \exp\left({-N\b^2 \frac{\epsilon^4}{32}}\right).
\ee   
It remains to bound the denominator. 
Note that 
\be \Eq(difference.1)
\E\ln\E_{\s}\eee^{-\b H_N(\s)-N\frac{\b^2}{2}}=\E\ln\E_{\s}\eee^{-\b H_N(\s)}-\ln \E \E_{\s}\eee^{-\b H_N(\s)},
\ee
so this is just the difference between the quenched and annealed free energy.  In the course of the proof that these are asymptotically equal for $\b<\b_p$, 
it it actually shown that
for any $\b< \b_p$, there exists $K>0$ such that
\be \label{d1bis}
-K\sqrt{N}<\E\ln\E_{\s}\eee^{-\b H_N(\s)}-\ln \E \E_{\s}\eee^{-\b H_N(\s)}\leq 0.
\ee
(see e.g. Section 11.2 in \cite{BovStatMech}).
Inserting this estimate into  \eqref{tricksf}, it follows that 
\be\label{tricksff}
 \E \left(\frac{\E_\s\left(\eee^{-\b H_N(\s)} \1_{\{-H_N(\s)> (1+\epsilon) \b N\}}\right)}{\E_\s\left(\eee^{-\b H_N(\s)} \right)} \right) \leq \exp\left(-{N\b^2 \frac{\epsilon^2}{4}+K\sqrt{N}}\right)+ \exp\left({-N\b^2 \frac{\epsilon^4}{32}}\right).
\ee   
This together with the Markov inequality implies \eqv(exfa) and ends the proof of the lemma.
\end{proof}
Thus the proof of Lemma \thv(key.1) is complete and this also concludes the proof of Theorem \thv(tresgolri).
%

\section{\bf Proof of Theorem \ref{fluctuationspin0}.} 
The quantity we need to control can be expressed as 
\be\label{interet}
F_{N}(\b)-J_N(\b)  =\frac{1}{N} \ln\left({{\mathcal{Z}_N(\b)}}\right). 
\ee
The proof of Theorem \thv(fluctuationspin0) relies essentially  on a Taylor expansion of the exponential function  in $\ZZ_N(\b)$. 
 Recalling the definition of $J_N(\b)$, see \eqv(theJ.1),
\be\label{t2'}
\mathcal{Z}_N(\b)=\E_\s\left(\eee^{-\b H_N(\sigma)-{\E_\s\left(\b^2 H_N(\sigma)^2\right)}/{2}}\right).
\ee
Expanding the exponential and  ordering terms in powers of $\b$, we see that  
\be
\mathcal{Z}_N(\b)= T_N(\b)+O_N(\b^5),
\ee
where 
\be \label{magique_t}
T_N(\b) \equiv 1-\b^4\frac{\left(\E_{\sigma}H_N(\sigma)^2\right)^2}{8}-\b^3\frac{\E_{\sigma} \left( H_N(\sigma)^3\right)}{3!}+\b^4\frac{\E_{\sigma} \left( H_N(\sigma)^4\right)}{4!}\,.
\ee
 Writing 
 \be \Eq(ln.1)
 \a_N(p) \ln \ZZ_N(\b) =\a_N(p) \ln\left(1+\ZZ_N(\b)-1\right),
 \ee
 with $\a_N(p)=A_N(p)/N$
 we see that the assertion of the theorem is equivalent to 
 \be
 \Eq(ln.2)
 \a_N(p) \left(\ZZ_N(\b)-1\right)\limlaw\NN\left(\mu(\b,p),\s(\b,p)^2\right).
 \ee
  
The proof of Theorem \thv(fluctuationspin0) will therefore follow  from the following two  lemmata.

\begin{proposition} \label{normal1}
With the notation above, for $p>2$  for \emph{any} $\b>0$, 
\be\label{normalfrere}
\alpha_N(p) \left(T_N(\b)-1\right)\limlaw  \mathcal{N}\left(\mu(\b,p), \sigma(\b,p)^2\right),
 \ee
 as $N\uparrow\infty$.
\end{proposition}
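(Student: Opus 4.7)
The plan is to Taylor-expand $T_N(\beta)-1$ using the identity $\E_\s \s_{A_1}\cdots\s_{A_k} = \1_{A_1\triangle\cdots\triangle A_k = \emptyset}$, which turns each $\E_\s H_N^k(\s) = (-a_N)^k \sum_{A_1,\dots,A_k \in I_N:\, A_1\triangle\cdots\triangle A_k=\emptyset} J_{A_1}\cdots J_{A_k}$ into an explicit multilinear polynomial in the independent standard Gaussians $\{J_A\}_{A\in I_N}$. The structure of $T_N-1$ depends sharply on the parity of $p$: for $p$ even, distinct ordered triples $(A,B,C)$ with $A\triangle B\triangle C=\emptyset$ exist (they must satisfy $|A\cap B|=|A\cap C|=|B\cap C|=p/2$), so $-(\beta^3/6)\E_\s H_N^3$ is a non-trivial element of the third Wiener chaos and provides the dominant stochastic piece of $T_N-1$; the $\beta^4$ terms have variance of strictly smaller order at the scale $\alpha_N(p) = N^{3p/4-3/2}$. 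For $p$ odd, $\E_\s H_N^3 \equiv 0$, and one must instead analyse $\tfrac{\beta^4}{24}[\E_\s H_N^4 - 3 Q^2]$ with $Q = \E_\s H_N^2$; the two-pair Wick contractions in $\E_\s H_N^4$ precisely cancel $3 Q^2$ and leave $-2 a_N^4\sum_A J_A^4$ plus a genuine fourth-chaos term indexed by ordered quadruples $(A,B,C,D)$ with $A\triangle B\triangle C\triangle D=\emptyset$ and all four distinct; the diagonal $\sum J_A^4$ is the sole source of the mean, while its centred fluctuation is negligible at the scale $\alpha_N(p) = N^{p-2}$, and the non-pairing fourth-chaos term provides the limiting Gaussian.

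The mean is routine Wick bookkeeping: for $p$ even everything cancels ($\mu = 0$), while for $p$ odd the computation $\E[\E_\s H_N^4 - 3Q^2] = -6 a_N^4 |I_N| \sim -6 p!\, N^{2-p}$ yields $\mu(\beta, p) = -\beta^4 p!/4$ after multiplication by $\alpha_N(p)$. For the variance, the natural tool is Isserlis' theorem applied to $\E[X_\s^k X_{\s'}^k]$, where $(X_\s,X_{\s'})$ is jointly Gaussian of covariance $f_{p,N}(R_N(\s,\s'))$: only the ``all-cross'' Wick pairings survive at leading order, contributing a multiple of $k!\,f_{p,N}(R_N)^k$. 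Together with the local CLT $\sqrt{N}R_N \Rightarrow X\sim\mathcal{N}(0,1)$ and the expansion \eqref{fondamental_zero}, which give $N^{p/2} f_{p,N}(R_N) \Rightarrow \sum_{k=0}^{[p/2]} d_{p-2k} X^{p-2k}$, the limiting $k$-th moment is thereby expressed as a constant times $\E[(\sum_k d_{p-2k}X^{p-2k})^k]$; in case (ii) the $-\beta^8 (p!)^2/8$ correction arises from subtracting the squared mean when passing from $\E(T_N-1)^2$ to $\Var(T_N-1)$.

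Finally, to upgrade $L^2$ convergence to convergence in law, the key observation is that the dominant piece lies in a fixed-order Wiener chaos over the independent $\{J_A\}$ (order three for $p$ even, order four for $p$ odd) with converging second moment. Invoking the Nualart--Peccati fourth-moment theorem reduces the task to checking $\E U_N^4 \to 3\sigma(\beta,p)^4$; this is the main obstacle, particularly for $p$ odd, where the admissible quadruples admit an elaborate family of overlap patterns indexed by the size $n_0$ of the degree-4 intersection and the six pairwise degree-2 intersection sizes $\{n_{ij}\}$ subject to $n_0 + \sum_{j\neq i} n_{ij} = p$ for each $i$, and one has to organise the eighth-order Wick expansion of $\E U_N^4$ to show that only the ``Gaussian'' pairings contribute at the leading order in $N$. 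Slutsky's theorem, combined with the already-established negligibility of the subleading pieces, then concludes the proof.
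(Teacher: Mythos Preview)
Your structural analysis is correct and matches the paper's: the parity-dependent identification of the dominant chaos component (third chaos $\E_\sigma(-H_N^3)/3!$ for $p$ even, fourth chaos $\mathcal H_4$ for $p$ odd), the LLN for $-\tfrac{a_N^4}{12}\sum_A J_A^4$ producing the mean $-\beta^4 p!/4$, and the overlap representation of the variance via $N^{p/2}f_{p,N}(R_N)\Rightarrow \sum_k d_{p-2k}X^{p-2k}$ are exactly what the paper does in Sections~\ref{podd} and~\ref{peven}.

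Where you diverge is in the CLT step. The paper proceeds by the classical method of moments: it shows directly that the $2k$-th moment of the rescaled chaos term converges to $(2k)!/(2^k k!)\,\sigma^{2k}$ and the odd moments vanish (Lemmas~\ref{varsecmom}--\ref{vanodd} and \ref{varsecmom'}--\ref{vanodd'}). You instead invoke the Nualart--Peccati fourth-moment theorem, reducing to $\E U_N^4\to 3\sigma^4$. This is a legitimate alternative, and in principle lighter since only one moment needs checking; it also gives a cleaner conceptual explanation for why the limit is Gaussian. On the other hand, the paper's direct route avoids any discussion of whether the fourth-moment criterion is applicable to chaos elements over a Gaussian array whose dimension grows with $N$ (it is, via embedding or the Nourdin--Peccati--Reinert universality results, but this should be said).

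The genuine gap in your proposal is that the crux---showing that in the expansion of $\E U_N^{2k}$ only the ``block'' pairings between distinct groups of indices survive at leading order---is explicitly flagged as ``the main obstacle'' and not carried out. This is precisely the content of the paper's Lemmas~\ref{nix.1} and~\ref{nix.101}: a case analysis of partial matchings between blocks, counting how many free multi-indices remain after imposing both the $J$-pairing constraint and the $\E_\sigma$ parity constraint, to show every non-block pattern loses at least a factor $N^{-p/2}$ (or $N^{-1}$). Your fourth-moment route needs exactly the $k=2$ instance of this argument, which is the same combinatorics and not noticeably easier. Without it, the proof is incomplete.
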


\begin{proposition}\label{diff0}
 For $p>2$ and  for all  $\b<\b_p$, 
\be\label{Tayf2}
\lim_{N \uparrow\infty}\alpha_N(p)\left|\ZZ_N(\b)- T_N(\b) \right|=0, \text{ in probability}.
 \ee
\end{proposition}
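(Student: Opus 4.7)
The plan is to view $T_N(\b)$ as the degree-four Taylor polynomial in $\b$ of the random function $\b\mapsto \ZZ_N(\b)$ and to show that the remainder vanishes after multiplication by $\alpha_N(p)$.

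First, a direct power-series expansion of $\ZZ_N(\b) = \E_\s\exp\bigl(-\b H_N(\s)-\tfrac{\b^2}{2}\E_\s H_N(\s)^2\bigr)$ in $\b$ shows that $T_N$ captures precisely the contributions of orders $\b^0$ through $\b^4$: the $\b^0$ coefficient is $1$; the $\b^1$ coefficient vanishes because $\E_\s H_N=0$; the $\b^2$ contributions from expanding $e^{-\b H_N}$ and from $e^{-\b^2\E_\s H_N^2/2}$ cancel exactly (this being the purpose of subtracting $NJ_N(\b)$); the $\b^3$ coefficient equals $-\E_\s H_N^3/6$; and the $\b^4$ coefficient equals $\E_\s H_N^4/24-(\E_\s H_N^2)^2/8$. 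Hence $R_N(\b):=\ZZ_N(\b)-T_N(\b)$ has a power series in $\b$ that starts at order $\b^5$, with coefficients that are explicit polynomials in the random quantities $\E_\s H_N^j$.

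Second, to keep the bookkeeping compact, I would pass to $L_N:=\ln \ZZ_N(\b)$ and use
\be
R_N \;=\; \bigl(e^{L_N}-1-L_N\bigr) \;+\; \bigl(L_N-(T_N-1)\bigr).
\ee
Expanding $\ln \E_\s e^{-\b H_N}$ in $\b$ produces only \emph{connected} diagrammatic contributions; the large disconnected Gaussian-pairing parts $(k-1)!!(\E_\s H_N^2)^{k/2}$ of the raw moment $\E_\s H_N^k$ are automatically subtracted, and the remaining $k=2$ piece is in turn cancelled by $-\b^2\E_\s H_N^2/2$. Matching the $\b^3$ and $\b^4$ terms against $T_N-1$ leaves $L_N-(T_N-1)$ as a series starting at $\b^5$ whose $L^2(\P)$-norm is much smaller than that of the bare $\E_\s H_N^k$.

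Third, each piece is controlled by a second-moment computation in the Gaussian couplings $J_A$, of the same flavour as those in Section 2.2. On the one hand, Proposition \ref{normal1} applied to $T_N-1$ gives $L_N = O(\alpha_N(p)^{-1})$ in probability, so $\alpha_N(p)(e^{L_N}-1-L_N) = O(\alpha_N(p)^{-1})\to 0$. On the other hand, each higher-order connected diagram contributing to $L_N-(T_N-1)$ carries an extra factor $\binom{N}{p}^{-1}\lesssim N^{-p}$ compared with its disconnected counterpart, so a straightforward Chebyshev bound together with a summation over $k\ge 5$ yields $\alpha_N(p)\bigl(L_N-(T_N-1)\bigr)\to 0$ in probability.

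The main obstacle is the uniform combinatorial bookkeeping for all $k \ge 5$: one must extract the correct sub-leading scaling of the connected higher moments and verify that the resulting series in $k$ converges, which is where the hypothesis $\b<\b_p$ enters (analogously to Lemma \ref{B.1}). A clean technical device to avoid handling the full series at once is to first truncate to $Z_\e^{\leq}$ via Proposition \ref{key.1}(i): on the event $\{-H_N(\s)\le(1+\e)\b N\}$ the exponent $-\b H_N(\s)-NJ_N(\b)$ is uniformly $O(N)$, so the pointwise Taylor remainder admits a deterministic bound which can be integrated against $\E_\s$ and then against $\P$, reducing the problem to a finite number of explicit moment estimates on $\E_\s H_N^k$.
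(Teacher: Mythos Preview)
Your decomposition is algebraically correct, and the piece $e^{L_N}-1-L_N=O_P(L_N^2)$ can indeed be controlled, but from Section~2 rather than from Proposition~\ref{normal1} (invoking the latter to bound $L_N$ would be circular, since it concerns $T_N$ and says nothing about $\ZZ_N$ until the present proposition is proved): since $N^{(p-2)/2}L_N\to 0$ in probability, one gets $\alpha_N(p)L_N^2=o_P(1)$ for both parities. Note, however, that this very estimate shows $\alpha_N(p)\bigl(L_N-(T_N-1)\bigr)$ and $\alpha_N(p)(\ZZ_N-T_N)$ differ only by $o_P(1)$; the decomposition has merely relocated the entire difficulty into the second piece.

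The genuine gap is in your treatment of $L_N-(T_N-1)$. The assertion that each higher-order connected diagram ``carries an extra factor $\binom{N}{p}^{-1}$'' is neither made precise nor established; the $k$-th $\E_\s$-cumulant of $H_N$ is a degree-$k$ polynomial in the $J_A$'s whose $L^2(\P)$-norm has no a~priori bound of the required size, and summing the tail over $k\ge 5$ is not justified since the Taylor series of $\b\mapsto\ln\ZZ_N(\b)$ has unknown radius of convergence. Your fallback via truncation does not rescue this: on the event $\{-H_N(\s)\le(1+\e)\b N\}$ the exponent $-\b H_N(\s)-NJ_N(\b)$ is still positive of order $N\b^2/2$, so the Lagrange remainder $|x|^5 e^{|x|}/5!$ is exponentially large in $N$ and the bound is useless; the problem is \emph{not} reduced to finitely many moment estimates.

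The paper proceeds quite differently. After passing to $Z_\e^\leq$, it isolates (depending on parity) the dominant random piece $M$ of $T_N-1$ (namely $-\b^3\E_\s H_N^3/6$ for $p$ even, $\b^4\HH_4$ for $p$ odd) and shows directly that
\[
\E\bigl[(Z_\e^\leq-\E Z_\e^\leq - M)^2\bigr]=\E\bigl[(Z_\e^\leq)^2\bigr]-\bigl(\E Z_\e^\leq\bigr)^2-2\,\E[M Z_\e^\leq]+\E[M^2]=o\bigl(\alpha_N(p)^{-2}\bigr),
\]
by \emph{exact} computation of each term. The key point (Lemmas~\ref{claim5}--\ref{claim8}) is that pushing the overlap expansion of $\E[(Z_\e^\leq)^2]$ one order beyond Section~2 produces precisely the term $\E[M^2]$, which then cancels; the cross term $\E[M Z_\e^\leq]$ is computed by explicit Gaussian integration over the $J_A$'s. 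No infinite series, cumulant hierarchy, or diagrammatic bookkeeping is needed.
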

\begin{remark} 
In view of the fact that by Lemma \thv(veryclose.1) $\ZZ_N(\b)$ and $Z^\leq_\e$ differ only by 
an exponentially small quantity, Proposition \thv(diff0) is immediate if we show that 
\be\label{Tayf2-1}
\lim_{N \uparrow\infty}\alpha_N(p)\left|Z^\leq_\e- T_N(\b) \right|=0, \text{ in probability}.
 \ee
\end{remark}

The proof of these two claims is given in the next subsections. Before that, 
we  emphasise that the different limiting pictures depending on the parity of $p> 2$ stem, in fact, from the $T_N$-term: \begin{itemize}
\item $p$ odd. In this case $\E_{\s}\left(H_N(\s)^3\right)=0$ by antisymmetry (see \eqref{esperance0} below), in which case 
\be\label{b4}
T_N(\b) = 1-\b^4\frac{\E_{\sigma} \left(H_N(\s)^2\right)^2}{8}+\b^4\frac{\E_{\sigma} \left(H_N(\s)^4\right)}{4!}.
\ee
This should be contrasted to 
\item $p$ even. We will see in the course of the proof that the only  relevant term is, as a matter of fact,  
the third moment, with the second and fourth moments contributing nothing due to a "wrong" blow-up. In other words, it will become clear that 
\be\label{b3}
T_N(\b) = 1+\b^3\frac{\E_{\s}\left(-H_N(\s)^3\right)}{3!} + \text{"vanishing corrections"}.
\ee
\end{itemize}

We prove Propositions \thv(normal1) and \thv(diff0) in the remainder of this paper. 
As a first step, in Section \ref{expli_repr} below we provide some explicit formulas for the moments of $\E_\s H^k, k=2,3, 4$ which appear in the definition of $T_N(\b)$. Proposition \ref{normal1} for odd $p$ is then proven in Section \ref{podd} below, whereas the case of $p$ even in Section \ref{peven}; the proof of Proposition \ref{diff0} for even $p$ is given in Section \ref{pevenprim} and the proof for the odd $p$ case is finally given in Section \ref{poddprim}.

\subsection{Explicit representations of quenched moments} \label{expli_repr}
In the sequel we use the following abbreviation when summing over multi-indices $A,B\in I_N$. 
 \be
\sum_{(\neq)} J_A J_B \E_\s(\s_A \s_B) \equiv \sum_{A, B \in I_N: A\neq B} J_A J_B \E_\s(\s_A \s_B), 
\ee 
and similarly for sums involving a higher number of multi-indices, in which case we mean that all multi-indices
involved must be different.

For the different terms appearing in $T_N(\b)$, taking into account cancellations due to the averages over $\s$, we have the following representations.

\begin{lemma} \label{H4} We have
\be\label{H3} 
{\E_{\sigma} \left(-H_N(\s)^3\right)}={ a_N^3}\sum_{A,B,C \in I_N}J_{A}J_{B}J_{C} \E_{\sigma}\left(\s_{A}\s_{B}\s_{C}\right)={ a_N^3}\sum_{(\neq)} J_{A}J_{B}J_{C} \E_{\sigma}\left(\s_{A}\s_{B}\s_{C}\right)\,.
\ee
and
\be
-\frac18{\E_{\sigma} \left(H_N(\s)^2\right)^2}+\frac 1{4!}{\E_{\sigma} \left(H_N(\s)^4\right)}=-\frac{ a_N^4}{12}\sum_{A \in I}J_{A}^4+\HH_4,
\ee
where
\be
{\mathcal H}_4 \equiv \frac{a_N^4}{4!}\sum_{(\neq)} J_{A}J_{B}J_{C}J_{D} \E_{\sigma}\left(\s_{A}\s_{B}\s_{C}\s_{D}\right).
\ee

\end{lemma}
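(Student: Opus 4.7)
\textbf{Proof plan for Lemma \ref{H4}.}

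The proof is purely combinatorial: we expand each quenched moment $\E_\s(H_N(\s)^k)$ as a sum indexed by $k$-tuples of multi-indices in $I_N$, and use the two elementary identities
\be
\E_\s(\s_A) = 0 \quad \text{for every nonempty } A \in I_N, \qquad \E_\s(\s_A \s_B) = \1_{\{A=B\}},
\ee
which follow from the fact that the $\s_i$ are i.i.d.\ Rademacher under $\E_\s$.

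For the first identity, raising $-H_N(\s) = a_N \sum_A J_A \s_A$ to the third power and interchanging expectation with summation yields the first equality directly. To pass to the sum $\sum_{(\neq)}$, note that if exactly two of $A,B,C$ coincide (say $A=B\neq C$), the spin product collapses to $\s_A^2 \s_C = \s_C$, whose $\s$-average vanishes; the fully coincident case $A=B=C$ similarly reduces to $\E_\s(\s_A)=0$. Only triples of pairwise distinct multi-indices survive.

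For the second identity, orthogonality gives $\E_\s(H_N(\s)^2) = a_N^2 \sum_{A} J_A^2$, hence
\be
\tfrac{1}{8}\left(\E_\s H_N^2\right)^2 = \tfrac{a_N^4}{8}\sum_{A} J_A^4 + \tfrac{a_N^4}{8} \sum_{A \neq B} J_A^2 J_B^2.
\ee
For the fourth moment, expand $\E_\s(H_N^4) = a_N^4 \sum_{A,B,C,D} J_A J_B J_C J_D \E_\s(\s_A \s_B \s_C \s_D)$ and stratify the sum according to the coincidence pattern of the four multi-indices. The Rademacher expectation is nonzero only when every coordinate of $\{1,\dots,N\}$ occurs an even number of times in the multiset $A \cup B \cup C \cup D$. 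Checking each pattern, only three survive: (i) all four multi-indices equal, contributing $\sum_A J_A^4$; (ii) two disjoint pairs, contributing $3 \sum_{A \neq B} J_A^2 J_B^2$, the factor $3$ being the number of ways to partition four ordered slots into two unordered pairs; (iii) all four pairwise distinct, which by definition equals $\tfrac{4!}{a_N^4}\HH_4$. Together,
\be
\tfrac{1}{4!}\E_\s(H_N^4) = \tfrac{a_N^4}{4!}\sum_A J_A^4 + \tfrac{3 a_N^4}{4!} \sum_{A \neq B} J_A^2 J_B^2 + \HH_4.
\ee
Subtracting the two expressions, the cross terms cancel exactly because $3/4! = 1/8$, while the diagonal terms combine with coefficient $1/4! - 1/8 = -1/12$, producing the announced formula.

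There is no substantive obstacle; the only place requiring care is the combinatorics of the ``two pairs'' pattern, and it is precisely the factor $3$ that induces the cancellation of the unwanted $J_A^2 J_B^2$ contributions, leaving $-\tfrac{a_N^4}{12}\sum_A J_A^4 + \HH_4$.
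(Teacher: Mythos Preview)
Your proof is correct and follows essentially the same route as the paper: expand the moments, classify the coincidence patterns among the multi-indices, observe that only the ``all equal'', ``two disjoint pairs'', and ``all distinct'' patterns survive, and then verify that the $J_A^2 J_B^2$ cross terms cancel because $3/4!=1/8$ while the diagonal terms combine to $-1/12$. The paper's argument is identical in substance, merely organised slightly differently in that it writes out $-\E_\s(H_N^2)^2$ and $\E_\s(H_N^4)$ separately before combining.
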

\begin{proof} 
Eq. \eqv(H3) is straightforward.
An elementary computations shows that
\be\label{esperance}
-{\E_{\sigma} \left(H_N(\s)^2\right)^2}=-{ a_N^4}\sum_{A, B \in I_N}J_{A}^2 J_{B}^2 = -{a_N^4}\sum_{(\neq)}J_{A}^2 J_{B}^2-{ a_N^4}\sum_{A \in I_N}J_{A}^4.
\ee
The fourth moment gives
\be
{\E_{\sigma} \left(H_N(\s)^4\right)}={a_N^4}\sum_{A, B, C, D \in I_N} J_{A}J_{B}J_{C}J_{D} \E_{\sigma}\left(\s_{A}\s_{B}\s_{C}\s_{D}\right). 
\ee
We now rearrange the summation according to the possible sub-cases: {\it i)} four multi-indices come in two distinct pairs (say $A=B$ and $C=D$ but $A\neq C$): in this case $\E_\s \s_A \s_B \s_C \s_D = \E_\s \s_A^2 \s_C^2 = 1$; {\it ii)} all four multi-indices coincide, in which case $\E_\s \s_A \s_B \s_C \s_D = \E_\s \s_A^4=1$; {\it iii)} at least one multi-index is different from all the others.
In this  case  the only non-vanishing contribution comes if four multi-indices are different. Hence
\be \label{esperance1}
{\E_{\sigma} \left(H_N(\s)^4\right)} = {3  a_N^4}\sum_{( \neq) } J_{A}^2J_{C}^2 +{a_N^4}\sum_{A \in I_N} J_{A}^4 +{ a_N^4}\sum_{(\neq)} J_{A}J_{B}J_{C}J_{D} \E_{\sigma}\left(\s_{A}\s_{B}\s_{C}\s_{D}\right),
\ee
where for the  first term on the right we use that there are $\binom{4}{2}=3$ ways to choose the pairs. Combining \eqref{esperance} and \eqref{esperance1}  yields the claim of the lemma. 
\end{proof}

\subsection{Proof of Proposition \ref{normal1}:  $p$ odd.} \label{podd} We first observe that 
\be\label{esperance0}
\E_{\s}\left(-H_N(\s)^3\right)= a_N^3\sum_{A,B,C \in I_N}J_{A}J_{B}J_{C} \E_{\s}\left(\s_{A}\s_{B}\s_{C}\right) =0,
\ee
since   $\s_A\s_B \s_C$ is a product of an \emph{odd} number of spins, and hence its expectation vanishes. Combining Lemma \ref{H4} and \eqref{esperance0}, it follows that
\be
\alpha_N(p)\left(T_N(\b)-1\right)=N^{p-2}\left(-\frac{\b^4 a_N^4}{12}\sum_{A \in I}J_{A}^4+\b^4\mathcal{H}_4\right).
\ee
First note that
\be\label{espLLN}
N^{p-2}\left(-\frac{\b^4 a_N^4}{12}\sum_{A \in I}J_{A}^4\right)=-N^{p-2}\frac{\b^4 N^2}{12\binom{N}{p}}\frac{1}{\binom{N}{p}}\sum_{A \in I}J_{A}^4 \rightarrow
-\frac{\b^4 p!}{4}, \as,
\ee
as $N\uparrow\infty$ by the strong law of large numbers.
 It  remains to prove that $N^{p-2} \mathcal H_4$  converges to a Gaussian with mean zero and variance 
 $\s(\b,p)^2$. 
 This will be done by proving that the moments of 
$N^{p-2} \mathcal H_4$ converge to those of the Gaussian.  We break this up into a series of lemmata.

\begin{lemma} (Second moment / variance). \label{varsecmom} For any $\b \geq 0$ and any $p\geq 3$,
 \be
\lim_{N \to +\infty} \b^8\E\left(\left(N^{p-2}  \mathcal{H}_4\right)^2\right)= \sigma(\b,p)^2,
\ee
\end{lemma}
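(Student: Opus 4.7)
The plan is to compute $\E\bigl[(N^{p-2}\HH_4)^2\bigr]$ directly from the explicit formula for $\HH_4$, exploiting that it is a degree-four polynomial in the Gaussian couplings whose monomials $J_A J_B J_C J_D$ all have pairwise distinct indices. Squaring produces an 8-fold product $\E[J_A J_B J_C J_D J_{A'} J_{B'} J_{C'} J_{D'}]$ which I would evaluate by Wick's (Isserlis') formula. The $(\neq)$ constraint forbids any pair internal to one of the two tuples (such pairs vanish by distinctness of the multi-indices), so only complete cross-matchings contribute; among these the non-vanishing ones match the unordered sets $\{A,B,C,D\}=\{A',B',C',D'\}$, and for each size-four subset there is exactly one such pairing.

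This reduction yields
\be
\E\HH_4^2 \;=\; \frac{a_N^8}{4!}\,\E_{\sigma,\sigma'}\!\left[\sum_{(\neq)} \sigma_A\sigma'_A\,\sigma_B\sigma'_B\,\sigma_C\sigma'_C\,\sigma_D\sigma'_D\right],
\ee
so that a second replica $\sigma'$ emerges naturally from the identity $(\E_\sigma[\cdots])^2=\E_{\sigma,\sigma'}[\cdots]$. Setting $U_A\equiv\sigma_A\sigma'_A\in\{-1,+1\}$ and $p_k\equiv\sum_{A\in I_N} U_A^k$, one has $p_2=p_4=\binom{N}{p}$ and $p_1=p_3=\binom{N}{p}\,f_{p,N}(R_N(\sigma,\sigma'))$. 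Newton's identity for the fourth elementary symmetric polynomial expresses the restricted sum as
\be
\sum_{(\neq)} U_A U_B U_C U_D \;=\; p_1^4-6p_1^2 p_2+3p_2^2+8p_1p_3-6p_4,
\ee
reducing the problem to the moments $\E_{\sigma,\sigma'}[f_{p,N}(R_N)^k]$ for $k=2$ and $k=4$.

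For $k=2$, identity \eqv(goodbound.2) gives $\E[f_{p,N}(R_N)^2]=\binom{N}{p}^{-1}$ exactly. For $k=4$ I would use that $NR_N=\sum_i\sigma_i\sigma'_i$ is a sum of $N$ i.i.d.\ Rademacher variables under $\E_{\sigma,\sigma'}$, so $\sqrt N\,R_N\lawconv X\sim\cN(0,1)$ by the classical CLT; since $|R_N|\le 1$, moment convergence is automatic. The key observation is that every monomial $d_{p-2k}N^{-k}R_N^{p-2k}$ appearing in the expansion \eqv(fondamental_zero) of $f_{p,N}(R_N)$ has the common size $N^{-p/2}$ once $R_N$ is rescaled by $\sqrt N$, so every coefficient $d_{p-2k}$ survives together in the limit, yielding
\be
N^{2p}\,\E\bigl[f_{p,N}(R_N)^4\bigr] \;\longrightarrow\; \E\bigl[P(X)^4\bigr],\qquad P(X)\equiv\sum_{k=0}^{[p/2]} d_{p-2k}\,X^{p-2k}.
\ee

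Combining these ingredients with the prefactor $N^{2p-4}a_N^8=N^{2p}\binom{N}{p}^{-4}$, the dominant piece $p_1^4$ delivers the $\E[P(X)^4]$ contribution, while the mixed polynomial terms in $p_1^2 p_2$, $p_2^2$, $p_1 p_3$, and $p_4$ collapse, after invoking $\E f_{p,N}(R_N)^2=\binom{N}{p}^{-1}$, to a finite correction proportional to $(p!)^2$. The main technical obstacle I anticipate is the precise bookkeeping at these subleading orders: two a priori divergent contributions of order $n^2$ must conspire to produce a bounded constant while the remaining terms of order $n$ must vanish in the limit, and obtaining the exact rational coefficient in $\sigma(\b,p)^2$ requires tracking every cancellation in the algebraic expansion of $\sum_{(\neq)}$ carefully.
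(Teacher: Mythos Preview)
Your approach is correct and essentially identical to the paper's: both reduce $\E[\HH_4^2]$ via the Gaussian pairing to $\frac{a_N^8}{4!}\sum_{(\neq)}\E_{\sigma,\sigma'}[U_AU_BU_CU_D]$, then remove the $(\neq)$ constraint (you via Newton's identity, the paper by direct inclusion--exclusion, yielding the same expression $n^4\E[f^4]-3n^2+2n$), and finally pass to the limit using the CLT for $\sqrt{N}R_N$. Your worry about divergent contributions is unfounded: after the prefactor $N^{2p-4}a_N^8=N^{2p}/n^4$, the $3p_2^2$ and $-6\E[p_1^2p_2]$ terms each contribute a bounded multiple of $(p!)^2$ (combining to $-3(p!)^2/4!=-(p!)^2/8$) while the order-$n$ terms vanish, so the bookkeeping is straightforward.
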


\begin{lemma} (Even moments). \label{evmom} For any $\b \geq 0$, and $p$ odd, and for 
all $k\in \N$,
\be
\lim_{N \to +\infty} \b^{8k}\\E\left(\left(N^{p-2}  \mathcal{H}_4\right)^{2k}\right)=\frac{(2k)!}{2^kk!}\sigma(\b,p)^{2k}.
\ee
\end{lemma}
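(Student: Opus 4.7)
The plan is to establish the lemma by the method of moments, reducing convergence in distribution to convergence of even moments. Lemma~\ref{varsecmom} already identifies the variance, so the task is to show that $\b^{8k}\E[(N^{p-2}\mathcal{H}_4)^{2k}]$ converges to the $2k$-th moment $\frac{(2k)!}{2^k k!}\sigma(\b,p)^{2k}$ of $\mathcal{N}(0,\sigma(\b,p)^2)$. Since $\mathcal{H}_4$ is a multilinear polynomial in the i.i.d.\ standard Gaussians $\{J_A\}_{A\in I_N}$, the principal tool is Isserlis' (Wick's) theorem, which expresses the expectation of a product of Gaussians as a sum over perfect matchings of its factors.

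Expanding, one obtains
\begin{equation*}
\E\bigl[(N^{p-2}\mathcal{H}_4)^{2k}\bigr] = \Bigl(\tfrac{N^{p-2} a_N^4}{4!}\Bigr)^{2k} \sum_{(\neq)_1,\ldots,(\neq)_{2k}} \E\Bigl[\prod_{i=1}^{2k} J_{A_i}J_{B_i}J_{C_i}J_{D_i}\Bigr] \prod_{i=1}^{2k} \E_\s(\s_{A_i}\s_{B_i}\s_{C_i}\s_{D_i}),
\end{equation*}
and Isserlis' theorem rewrites the inner $J$-expectation as a sum over perfect matchings of the $8k$ Gaussian factors, each matching contributing only when the paired multi-indices agree. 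The dominant contribution comes from \emph{pair contractions}: matchings in which the $2k$ quadruples split into $k$ pairs $\{i,j\}$ with $\{A_j,B_j,C_j,D_j\}$ a permutation of $\{A_i,B_i,C_i,D_i\}$ and the $J$'s matched accordingly. The number of such pair partitions is $(2k-1)!!=(2k)!/(2^k k!)$, and Lemma~\ref{varsecmom} identifies the asymptotic value of $\b^{8}$ times the contribution of a single pair as $\sigma(\b,p)^2$; multiplying over the $k$ pairs then yields the claimed Gaussian moment.

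The main difficulty is controlling the remaining matchings and showing that their contribution to $\b^{8k}\E[(N^{p-2}\mathcal{H}_4)^{2k}]$ is $o(1)$ as $N\to\infty$. These are matchings in which three or more of the $2k$ quadruples share a common multi-index, or in which a pair is matched by a permutation that, through the spin factor, does not reproduce the pair-variance contribution. In each such case, the extra coincidence constraints either reduce the number of free multi-indices in the summation or further restrict the admissible spin-overlap patterns beyond what occurs in the pair-contraction case. A careful bookkeeping — extending the scaling analysis of Lemma~\ref{varsecmom} to $8k$ factors, and using the odd parity of $p$ to eliminate certain configurations containing odd-multiplicity indices — shows that each deviation from a pair contraction is suppressed by at least a polynomial factor in $N^{-1}$. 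Organising the matchings into finitely many isomorphism classes and applying this bound class by class closes the argument.
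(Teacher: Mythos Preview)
Your strategy matches the paper's exactly: expand the $2k$-th moment, isolate the contribution from block-to-block pairings (your ``pair contractions''), and show everything else is lower order. The combinatorial prefactor $(2k)!/(2^kk!)$ and the identification of each pair's contribution with the variance from Lemma~\ref{varsecmom} are both correct.

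However, the entire substance of the proof lies in the step you summarise as ``careful bookkeeping,'' and this is not carried out. In the paper this is the content of a separate lemma (Lemma~\ref{nix.1}): the remainder $R_N(2k)$ is split into explicit types according to how multi-indices are shared between blocks --- one multi-index shared between two blocks with the other three free (type~1), two multi-indices shared (type~2), or a multi-index appearing with multiplicity four or more (type~3) --- and in each case an explicit count of free summation multi-indices, using that $\E_\sigma(\sigma_{A}\sigma_{B}\sigma_{C}\sigma_{D})\neq 0$ only when every elementary index occurs an even number of times, gives bounds of order at most $N^{p(2k-1/2)}$ (type~1) and $N^{2kp-1}$ (type~2), both strictly smaller than the $N^{2kp}$ needed to survive the prefactor $N^{(2pk-4k)}a_N^{8k}$. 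Your proposal does not indicate how this counting goes; Isserlis' theorem by itself only reduces the expectation to a sum over pairings without saying which pairings dominate, and the actual work is precisely this index accounting.

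One further remark: your appeal to the odd parity of $p$ in the remainder analysis is a red herring. The parity of $p$ is used elsewhere in the section (to make $\E_\sigma H_N(\sigma)^3$ vanish, see \eqref{esperance0}), but the bound on $R_N(2k)$ in the paper holds for any $p\geq 3$ and relies only on the spin-parity constraint inside each $\E_\sigma$, not on the parity of $p$ itself.
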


\begin{lemma} (Vanishing of odd moments). \label{vanodd} For any $\b \geq 0$, $p$ odd and for all $k\in\N$, 
 \be
\lim_{N \to +\infty} \E\left(\left(N^{p-2}  \mathcal{H}_4\right)^{2k+1}\right)=0.
 \ee
\end{lemma}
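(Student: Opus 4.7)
The plan is to mirror the Wick expansion used in Lemmas \thv(varsecmom) and \thv(evmom) and then exploit a parity argument: because $2k+1$ is odd, Wick matchings of the $8k+4$ multi-indices cannot decompose the $2k+1$ quadruples into isolated pairs, and this will force every contributing matching to be of strictly subleading order.

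Concretely, I would write
\[
\E\bigl(\mathcal H_4^{2k+1}\bigr)=\Bigl(\frac{a_N^4}{4!}\Bigr)^{2k+1}\sum^{(\neq)}\E\Bigl(\prod_{j=1}^{2k+1}J_{A_j}J_{B_j}J_{C_j}J_{D_j}\Bigr)\prod_{j=1}^{2k+1}\E_\sigma(\sigma_{A_j}\sigma_{B_j}\sigma_{C_j}\sigma_{D_j}),
\]
where the sum is over $2k+1$ quadruples $(A_j,B_j,C_j,D_j)$ of multi-indices in $I_N$ with the four entries of each quadruple distinct. By Isserlis' formula the Gaussian expectation is a sum over perfect matchings $\pi$ of the $8k+4$ multi-indices; since no pair of $\pi$ can lie inside a single quadruple, $\pi$ defines a $4$-regular multigraph $G_\pi$ on the $2k+1$ quadruples, and both the Wick identifications and the $\sigma$-parity constraints $A_j\triangle B_j\triangle C_j\triangle D_j=\emptyset$ factorise over the connected components of $G_\pi$.

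The main technical step, which I expect to be the principal obstacle, is a per-component estimate: the contribution of a connected component on $m$ quadruples is of exact order $N^{-2(p-2)}$ when $m=2$ (as already proved in Lemma \thv(varsecmom)), while for $m\ge 3$ it is $o(N^{-m(p-2)})$. The intuition is that each edge of $G_\pi$ beyond the four needed for an isolated pair imposes an extra symmetric-difference relation on the free multi-indices, costing at least a power of $N^{-1}$ in the count of admissible tuples; this can be verified by induction on $m$ using the intersection-pattern parametrisation of $\sigma$-admissible quadruples already exploited for Lemma \thv(varsecmom), and is the only genuinely new combinatorial ingredient compared to the even-moment analysis.

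Granting this estimate, the conclusion is immediate. Every component of $G_\pi$ has $m\ge 2$, and because $2k+1$ is odd, no decomposition of $\{1,\dots,2k+1\}$ into component sizes can use only $m=2$, so at least one component has $m\ge 3$. Therefore every matching contributes $o\bigl(N^{-(2k+1)(p-2)}\bigr)$, and, since the number of combinatorial types of matchings is bounded uniformly in $N$, summing yields $\lim_{N\to\infty}N^{(2k+1)(p-2)}\E\bigl(\mathcal H_4^{2k+1}\bigr)=0$, as required.
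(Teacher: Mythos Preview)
Your approach is correct and coincides with the paper's at the level of ideas: the parity of $2k+1$ forbids a decomposition into isolated block-pairs, so every Wick matching falls into the remainder class already analysed in the even-moment proof, and the result follows. The paper's proof is in fact a two-line reduction to Lemma~\thv(nix.1): since the leading term $P_N(2k+1)$ is empty, the entire moment equals $R_N(2k+1)$, and the type-(1)/type-(2) estimates of Lemma~\thv(nix.1) carry over verbatim with $2k$ replaced by $2k+1$, giving $R_N(2k+1)=o(N^{(2k+1)p})$.

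Your component-wise formulation via the $4$-regular multigraph is correct but more elaborate than needed. In particular, the per-component estimate you flag as the ``principal obstacle'' and ``the only genuinely new combinatorial ingredient'' is not new: any connected component of $G_\pi$ of size $m\ge 3$ necessarily contains two blocks sharing one, two, or three multi-indices (a $4$-regular multigraph on $m\ge 3$ vertices cannot have every edge of multiplicity $4$), which is exactly the paper's type~(1) or type~(2). So no separate induction on $m$ is required; you may simply invoke the remainder bounds already established in Lemma~\thv(nix.1).
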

The remainder of this subsection is devoted to the proofs of these lemmata, which combined imply Proposition \ref{normal1} for $p$ odd.

\begin{proof}[Proof of Lemma \ref{varsecmom}]  We have that
 \bea\label{gastro100}\nonumber
\E\left(\mathcal{H}_4^2\right)&=&\frac{a_N^8}{4!^2}\sum_{\substack{A,B,C,D \in I_N \\ (\neq)}} \sum_{\substack{E,F,G,H \in I_N \\ (\neq)}} \E \left(J_{A}J_{B} \dots J_{H}\right) \E_{\sigma}\left(\s_{A}\s_{B}\s_{C}\s_{D}\right)\E_{\sigma'}\left(\s_{E}'\s_{F}'\s_{G}'\s_{H}'\right)\\\nonumber
&=&4!\frac{ a_N^8}{4!^2}\sum_{(\neq)} \E \left(J_{A}^2J_{B}^2J_{C}^2J_{D}^2\right) \E_{\sigma}\left(\s_{A}\s_{B}\s_{C}\s_{D}\right)\E_{\sigma'}\left(\s_{A}'\s_{B}'\s_{C}'\s_{D}'\right)\\
&=&\frac{ a_N^8}{4!}\sum_{(\neq)}  \E_{\sigma,\sigma'}\left(\s_{A}\s_{B}\s_{C}\s_{D}\s_{A}'\s_{B}'\s_{C}'\s_{D}'\right).
\eea
Here we used that in order to get a non-vanishing contributions, all the multi-indices in the first sum must be paired with one in the second sum. The number of such pairings is $4!$.

Next we express  $\E\left(\mathcal{H}_4^2\right)$ as a function of the overlaps.
\bea\label{gastro99}\nonumber
\E\left(\mathcal{H}_4^2\right)&=&\frac{ a_N^8}{4!}\bigg[\sum_{\substack{A,B,C,D \in I_N}}  \E_{\sigma,\sigma'}\left(\s_{A}\s_{B}\s_{C}\s_{D}\s_{A}'\s_{B}'\s_{C}'\s_{D}'\right)-3\sum_{\substack{A,B \in I_N\\ (\neq)}} \E_{\sigma,\sigma'}\left(\s_{A}^2\s_{B}^2\s_{A}'^2\s_{B}'^2\right)\\
&&-\sum_{A\in I_N} \E_{\sigma,\sigma'}\left(\s_{A}^4\s_{A}'^4\right)\bigg]\\\nonumber
&=&\frac{ a_N^8}{4!}\left[\sum_{\substack{A,B,C,D \in I_N}}  \E_{\sigma,\sigma'}\left(\s_{A}\s_{B}\s_{C}\s_{D}\s_{A}'\s_{B}'\s_{C}'\s_{D}'\right)- 3\left({\binom{N}{p}}^2-{\binom{N}{p}}\right)- \binom{N}{p}\right],
\eea
and therefore
\bea\label{gastro98}\nonumber
\E\left(\mathcal{H}_4^2\right)&=&\frac{a_N^8}{4!} \E_{\s,\s'}\left(\left(\sum_{A \in I_N} \s_{A}\s_{A}' \right)^4 \right)-\frac{3  a_N^8}{4!} {\binom{N}{p}}^2+\frac{2\b^8 a_N^8}{4!} \binom{N}{p}\\
&=&\frac{1}{4!}\sum_{m \in \Gamma_N} \left( N f_N^p\left( m \right)\right)^4p_N(m)-\frac{N^4}{8\binom{N}{p}^2}+O(N^{4-3p}),
 \eea
where we used \eqref{switch2}. Collecting the leading terms, we see that 
\be\label{gastrofin}
\E\left(\left(N^{p-2}  \mathcal{H}_4\right)^2\right)=\frac{1}{4!}\sum_{m \in \Gamma_N} \left( N^{\frac{p}{2}} f_N^p\left( m \right)\right)^4p_N(m)-\frac{ p!^2}{8}
+o(1).
 \ee
Furthermore, by \eqref{fondamental_zero}, we have that 
\be\label{gastrofin1}
N^{\frac{p}{2}}f_N^p\left(m\right)=\sum_{k=0}^{[p/2]} d_{p-2k}{\left(\sqrt{N}m\right)}^{p-2k}(1+O(1/N)),
\ee
and using this in the sum on the r.h.s. of \eqref{gastrofin} yields
\be\label{gastrofin2}
\E\left(\left(N^{p-2}  \mathcal{H}_4\right)^2\right)   =\frac{1}{4!}\sum_{m \in \Gamma_N} \left( \sum_{k=0}^{[p/2]} d_{p-2k}{\left(\sqrt{N}m\right)}^{p-2k}\right)^4p_N(m) \left(1+O\left(\frac{1}{N}\right)\right)-\frac{ p!^2}{8}+o_N(1)\,.
\ee
By Taylor-expanding in $m=0$, it can be checked that 
\be\label{Tay1}
p_N(m) = \frac{2}{\sqrt{2\pi}} \eee^{- N m^2/2}[1+o_N(1)].
\ee

It follows that the sum in \eqv(gastrofin2) converges to an 
integral, namely,
\bea \Eq(pregauss.1)
\lim_{N \uparrow \infty} \E\left(\left(N^{p-2}  \mathcal{H}_4\right)^2\right)  &=&  \frac{1}{12\sqrt{2 \pi}}\int_{-\infty}^{+\infty} \left(\sum_{k=0}^{[p/2]} d_{p-2k}{m}^{p-2k}\right)^4 \eee^{-\frac{m^2}{2}}dm-\frac{ p!^2}{8}\nonumber\\
&=&\b^{-8}\s(\b,p)^2.
\eea
This proves the lemma.
\end{proof}

\begin{proof}[Proof of Lemma \ref{evmom}] 
The $2k$-th moments of $\HH_4$ can be written as 
\be\label{combimpaire-}
\E\left(  \mathcal{H}_4^{2k}\right)=  \frac{  a_N^{8k}}{4!^{2k}}\E\left(\left(\sum_{(\neq)} J_{A}J_{B}J_{C}J_{D} \E_{\sigma}\left(\s_{A}\s_{B}\s_{C}\s_{D}\right)\right)^{2k}\right).
 \ee
and
 \bea\label{combimpaire3-}\nonumber
&&\E\left(\left( \sum_{\substack{A,B,C,D \in I_N \\ (\neq)}} J_{A}J_{B}J_{C}J_{D}\E_{\sigma}\left( \s_{A}\s_{B}\s_{C}\s_{D}\right)\right)^{2k}\right)= \\
&& \prod_{i=1}^{2k}\sum_{\substack{A_i,B_i,C_i,D_i \in I_N \\ (\neq)}}
 \E\left(\prod_{i=1}^{2k} J_{A_i}J_{B_i}J_{C_i}J_{D_i} \right) \prod_{i=1}^{2k} \E_{\sigma}\left( \s_{A_i}\s_{B_i} \s_{C_i} \s_{D_i}\right). 
 \eea
 Since the averages of odd powers of the 
 random variables $J$ vanish, only terms in the sums over
 the multi-indices in \eqv(combimpaire3-) give a non-zero 
 contribution where each multi-index occurs at least twice.
 Moreover, the leading order contribution
 comes from terms where each multi-index occurs 
 \emph{exactly} twice and where these pairings 
 take place between the multi-indices of two indices $i$ and $j$. We say a {\it pairing between the sums $i$ and $j$ takes place} as soon as $(A_i,B_i,C_i,D_i)=(\pi[A_j],\pi[B_j],\pi[C_j],\pi[D_j]))$ where $\pi$ is any permutation \footnote{note that we have $4!$ possible permutations.} on $(A_j,B_j,C_j,D_j)$.
 Since there are  $\frac{(2k)!}{k!2^k}$ different ways to construct such sum-pairings, we re-write the right-hand side of \eqv(combimpaire3-) as
\be\label{hard3'}\nonumber
\frac{4!^{k}(2k)!}{k!2^k}\sum_{
 (\neq)} \prod_{i=1}^{k}  \E\left(J_{A_i}^2 J_{B_i}^2  J_{C_i}^2 J_{D_i}^2\right)\left( \E_{\sigma}
\left( \s_{A_i}\s_{B_i} \s_{C_i} \s_{D_i}
\right)\right)^2+R_N(2k)
\equiv P_N(2k)+R_N(2k).
 \ee
The first term can be written as
\be\label{hard4'}
P_N(2k)= \frac{4!^k(2k)!}{k!2^k}\sum_{
(\neq)} 
\prod_{i=1}^k\left(\E_{\sigma}\left( \s_{A_i}\s_{B_i}\s_{C_i}\s_{D_i}\right)\right)^2.
 \ee 
 This term will converge to the appropriate moment of the
 Gaussian, whereas the  
 $R_N$-term tend to zero.

\begin{lemma}\TH(gauss.1)
With the notation above, 
\be
\Eq(gauss.2)
\lim_{N\uparrow\infty}\frac{N^{\left(2pk-4k\right)} a_N^{8k}\b^{8k}}{4!^{2k} }  P_N(2k) = \frac{(2k)!}{k!2^k}
\s(\b,p)^{2k}.
\ee
\end{lemma}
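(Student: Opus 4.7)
The plan is to identify the prefactor $\frac{(2k)!}{k! 2^k}$ as the Wick/Gaussian pairing count and reduce the statement to the $k=1$ case already treated in Lemma \thv(varsecmom). Since $P_N(2k)=\frac{4!^{k}(2k)!}{k!2^k}\,S_N$ with
\be
S_N\equiv\sum_{(\neq)} \prod_{i=1}^{k}\bigl(\E_{\sigma}(\s_{A_i}\s_{B_i}\s_{C_i}\s_{D_i})\bigr)^{2},
\ee
where the sum is over $4k$ \emph{distinct} multi-indices, the claim is equivalent to
\be\Eq(goal.1)
\frac{N^{2pk-4k}a_N^{8k}\b^{8k}}{4!^{k}}\,S_N\xrightarrow[N\to\infty]{}\sigma(\b,p)^{2k}.
\ee
This matches the expected Gaussian moment because, by Lemma \thv(varsecmom), a single factor contributes exactly $\s(\b,p)^{2}$ in the limit; more precisely, reading \eqv(gastro100) backwards,
\be
\sum_{(\neq)}\bigl(\E_{\sigma}(\s_{A}\s_{B}\s_{C}\s_{D})\bigr)^{2}=\frac{4!}{a_N^{8}}\E\bigl[\HH_4^{2}\bigr].
\ee

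The central step is to prove that $S_N$ \emph{asymptotically factorizes} into a product of $k$ independent single-block sums, i.e.
\be\Eq(factor.1)
S_N=\Bigl(\sum_{(\neq)}\bigl(\E_{\sigma}(\s_{A}\s_{B}\s_{C}\s_{D})\bigr)^{2}\Bigr)^{k}+\text{(lower order)}.
\ee
To see this, first drop the \emph{cross-block} distinctness constraint, so the right-hand side of \eqv(factor.1) without the error term is obtained. The difference consists of terms in which at least one multi-index from block $i$ coincides with a multi-index from another block $j\neq i$. Each such coincidence lowers the number of free multi-indices by one and hence produces a factor of order $\binom{N}{p}^{-1}=O(N^{-p})$ relative to the leading term, whereas each fully free block has $\binom{N}{p}^{4}=O(N^{4p})$ free choices. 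Since the prefactor $N^{2pk-4k}a_N^{8k}=N^{2pk}\binom{N}{p}^{-4k}$ in \eqv(goal.1) is precisely calibrated so that the fully factorized product contributes a finite limit, any coincidence reduces the contribution by at least $N^{-p}$ and thus vanishes as $N\to\infty$. Bounding the summand uniformly (using $|\E_{\s}(\s_A\s_B\s_C\s_D)|\le 1$) makes this argument rigorous.

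Granting \eqv(factor.1), the proof is concluded by inserting the identity
\be
N^{2p-4}\,\frac{a_N^{8}\b^{8}}{4!}\sum_{(\neq)}\bigl(\E_{\sigma}(\s_{A}\s_{B}\s_{C}\s_{D})\bigr)^{2}=\b^{8}N^{2p-4}\,\E\bigl[\HH_4^{2}\bigr]\xrightarrow[N\to\infty]{}\sigma(\b,p)^{2},
\ee
where the limit is exactly Lemma \thv(varsecmom), and raising to the $k$-th power. The main technical obstacle will be making the combinatorial bookkeeping of \eqv(factor.1) precise: one has to enumerate the possible ways that multi-indices from distinct blocks can coincide (including the cases where two, three, or four blocks are tied together) and show that each such pattern is strictly suppressed in $N$. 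This is essentially the same type of argument used to single out the leading sum-pairings in \eqv(combimpaire3-), applied now one level deeper, and no new combinatorial structure beyond counting the free multi-indices is needed.
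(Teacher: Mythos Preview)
Your approach is exactly the paper's: factorize the multi-block sum into the $k$-th power of the single-block sum (the paper states \eqv(gauss.3) with error $1+O(N^{-p})$ as ``elementary''), identify the single block with $\frac{4!}{a_N^{8}}\E[\HH_4^{2}]$ via \eqv(gastro100), and invoke Lemma \thv(varsecmom).

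Your justification of \eqv(factor.1), however, has a gap. You write that a cross-block coincidence costs a factor $\binom{N}{p}^{-1}$ and that ``bounding the summand uniformly (using $|\E_\s(\s_A\s_B\s_C\s_D)|\le 1$) makes this argument rigorous.'' That uniform bound is too crude. With the summand replaced by $1$, the terms with one forced coincidence number at most $O\bigl(\binom{N}{p}^{4k-1}\bigr)$, and multiplying by your prefactor $N^{2pk}\binom{N}{p}^{-4k}$ leaves $N^{2pk}\binom{N}{p}^{-1}\asymp N^{p(2k-1)}$, which \emph{diverges} for every $k\ge 1$. What actually produces the $O(N^{-p})$ suppression is precisely the $\E_\s$-constraint you discarded: since $\E_\s(\s_A\s_B\s_C\s_D)\in\{0,1\}$, only tuples in which every single index in $A\cup B\cup C\cup D$ has even multiplicity survive, and with one multi-index fixed the remaining three give at most $O(N^{p})$ such tuples (this is the same counting as in the proof of Lemma \thv(nix.1)). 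Hence a coincidence $A_1=A_2$ contributes at most $\binom{N}{p}\cdot O(N^{p})\cdot O(N^{p})\cdot Q^{\,k-2}=O(N^{2pk-p})$, which is $O(N^{-p})$ relative to $Q^{k}$. In short, you must retain the parity constraint inside the counting rather than bound it away.
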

\begin{proof}
It is elementary to see that 
\be
\Eq(gauss.3)
\sum_{
 (\neq)}  \prod_{i=1}^{k} \E_{\sigma}
\left( \s_{A_i}\s_{B_i} \s_{C_i} \s_{D_i}
\right)^2=
\left(
\sum_{
 (\neq)} \left(\E_{\sigma}\left( \s_{A}\s_{B}\s_{C}\s_{D} \right)\right)^2\right)^k\left(1+O(N^{-p})\right).   
 \ee
 Recalling \eqv(gastro100),  
 \be
 \Eq(gauss.4)
\sum_{ (\neq)} \left(\E_{\sigma}\left( \s_{A}\s_{B}\s_{C}\s_{D} \right)\right)^2
 =\frac{4!}{a_N^8} \E\left(\HH_4^2\right).
 \ee
Putting these observations together and using \eqv(pregauss.1), we arrive at the assertion of the lemma.
\end{proof}
We now turn to the remainder term.

\begin{lemma}
\TH(nix.1)
\be
\Eq(nix.2)
\lim_{N\uparrow\infty} \frac{N^{\left(2pk-4k\right)} a_N^{8k}}{4!^{2k} }R_N(2k)=0.
\ee
\end{lemma}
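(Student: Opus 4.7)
The plan is to classify the pairings of the $8k$ Gaussian variables arising in the expansion of $\E\bigl[\prod_{i=1}^{2k}J_{A_i}J_{B_i}J_{C_i}J_{D_i}\bigr]$ via Wick's theorem, and to show that every ``non-main'' pairing contributes at strictly lower order in $N$ than the main ones already producing $P_N(2k)$. A pairing is called \emph{main} if every multi-index has multiplicity exactly $2$ and the induced graph on the $2k$ groups of four positions is itself a perfect matching of degree $4$ (as in \eqref{hard4'}). I would split $R_N(2k)=R_N^{(\mathrm{I})}(2k)+R_N^{(\mathrm{II})}(2k)$, where Class~I collects pairings with at least one multi-index of multiplicity $\geq 4$ (odd multiplicities contribute $0$ since $\E[J^{2m+1}]=0$), and Class~II collects the cross pairings (all multiplicities equal to $2$, but with a non-main group graph).

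For Class~I, each coalescence of two pairs into a quadruple strictly reduces the number of distinct multi-indices by one compared with the main count $4k$, gaining a factor $|I_N|^{-1}\sim N^{-p}$ in the upper bound. Keeping track of the $\E_\s$-constraints exactly as in the derivation of $P_N(2k)$---each constraint cuts the count of its four multi-indices from $\sim N^{4p}$ down to $\sim N^{2p}$, as used in the variance calculation---the valid Class~I count is at most $O(N^{2pk-p})$. Since the normalization $N^{2pk-4k}a_N^{8k}/4!^{2k}\sim N^{-2pk}$, the Class~I contribution to the rescaled $R_N(2k)$ is $O(N^{-p})$, vanishing for $p\geq 3$. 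The Gaussian moments $\E[J^{2m}]$ appearing as weights are absolute constants and do not affect the scaling.

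For Class~II, all $4k$ multi-indices remain distinct, so the reduction must come from the $2k$ constraints $A_i\triangle B_i\triangle C_i\triangle D_i=\emptyset$ being less redundant under a cross pairing. I would adapt the overlap representation \eqref{switch2} used in Lemma~\ref{varsecmom}: each cross pairing corresponds to a connected replica graph on more than $2k$ copies of $\sigma$, which rewrites the constrained sum as a joint sum over strictly more than $k$ overlaps, weighted by Gaussian-like densities $p_N$ as in \eqref{Tay1}. Under the rescaling $m\mapsto m/\sqrt N$ that underlies the variance computation \eqref{gastrofin}--\eqref{gastrofin2}, each additional independent overlap produces a further negative power of $N$, so after normalization the total Class~II contribution is $o(1)$.

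The hard part is the Class~II bookkeeping: one must enumerate the non-main multigraphs on $\{1,\dots,2k\}$ carrying $4k$ edges, prove the combinatorial dichotomy that each such multigraph forces at least one additional independent overlap beyond the $k$ of the main case, and then evaluate the resulting multidimensional Gaussian integrals uniformly in the pairing type. This generalises the $k=1$ variance computation of Lemma~\ref{varsecmom}, where cross pairings do not occur by a dimension count, and reuses throughout the polynomial expansion of $f_{p,N}$ from \eqref{fondamental_zero}.
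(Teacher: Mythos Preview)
Your split into Class~I (some multi-index of multiplicity $\geq 4$) and Class~II (all multiplicities exactly $2$, but the induced graph on the $2k$ blocks is not a perfect matching) mirrors the paper's types~(3) versus (1)--(2), and your Class~I estimate is essentially the paper's treatment of type~(3).

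The two approaches diverge for Class~II. The paper does \emph{not} pass to an overlap/replica representation here; it counts multi-indices directly. The combinatorial input is that $\sum_{A,B,C,D\,(\neq)}\E_\s(\s_A\s_B\s_C\s_D)\lesssim N^{2p}$, and that this drops to $O(N^p)$, respectively $O(N^{p-1})$, when one, respectively two, of the four multi-indices are prescribed from outside the block. The $O(N^{p-1})$ bound is the delicate one: it relies on the $(\neq)$ constraint \emph{within} a block, since if $A$ and $B$ are both prescribed then $B\neq A$ forces $B$ to share at most $p-1$ single indices with $A$, costing a factor $N^{-1}$. With these counts the paper treats the two basic cross patterns separately---two blocks sharing exactly one multi-index (type~(1)) versus exactly two (type~(2))---and obtains overall bounds $O(N^{p(2k-1/2)})$ and $O(N^{2pk-1})$, both $o(N^{2pk})$.

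Your Class~II sketch has a genuine gap. First, a cross pairing does not introduce ``more than $2k$ copies of $\sigma$'': there are always exactly $2k$ factors $\E_\s$, one per block; only the edge structure among these $2k$ vertices changes. Second, and more importantly, a partial sharing such as the paper's type~(2)---two blocks sharing exactly two multi-indices $A,B$ out of four---does \emph{not} factor through any collection of pairwise overlaps: the quantity $\sum_{A,B,C,D,E,F\,(\neq)}\E_\s(\s_A\s_B\s_C\s_D)\E_{\s'}(\s'_A\s'_B\s'_E\s'_F)$ is not a function of $R_N(\s,\s')$ alone, so the slogan ``one additional independent overlap, hence one further negative power of $N$'' is not justified. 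Indeed the actual gain in type~(2) is only $N^{-1}$, not a power of $N^{-p/2}$, and it comes precisely from the within-block constraint $A\neq B$, which is invisible in your overlap heuristic. To push the replica route through you would have to re-expand the $(\neq)$ constraints by inclusion--exclusion and track the resulting corrections across all components of the block graph, which effectively reproduces the paper's direct count. The direct counting is both shorter and more transparent here.
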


\begin{proof}
Recall that the sums in \eqv(combimpaire3-) run over $8k$ multi-indices which by the 
pairing condition due to the $J$ is reduced to $4k$ multi-indices. In $P_N(2k)$, there are indeed that many
sums. We must show that in what is left, i.e. if pairings occur  that involve more than 
two groups, the effective number os summations is further reduced. 
This means that there are  terms where (double) products  of the following type appear:
\begin{enumerate}
\item
\[
 \E_{\s}\left(\s_A\s_B\s_C\s_D\right)\E_{\s}\left(\s_A\s_E\s_F\s_G\right),
\]
where $(E,F,G)$ do not coincide with any of the multi-indices $(A,B,C,D)$ or
\item
\[
\E_{\s}\left(\s_A\s_B\s_C\s_D\right)\E_{\s}\left(\s_A\s_B\s_E\s_F\right),
\]
where $(E,F)$ do not coincide with any of the multi-indices $(A,B,C,D)$ \footnote{Note that $\E_{\s}\left(\s_A\s_B\s_C\s_D\right)\E_{\s}\left(\s_A\s_B\s_C\s_E\right)$ implies that $E=D$ and is thus not a particular case.} or 
\item
\[
 \text{ \it sums which appear in pairs but at least one of the pairs coincide.}
\]
\end{enumerate}
The last case it trivially of lower order. 

We first look at the terms of type (1). 
They are of the form 
\be
\Eq(type.1)
\widetilde \sum_{(1)}  \E_{\s}\left(\s_A\s_B\s_C\s_D\right)\E_{\s}\left(\s_A\s_E\s_F\s_G\right) \prod_{i=1}^{2k-2} \E_{\sigma}\left( \s_{A_i}\s_{B_i} \s_{C_i} \s_{D_i}\right),
\ee
where the sum is over at most $4k$ different multi-indices where moreover $A,B,C,D,E,F,G$ respect the condition stated under (1) and of course the multi-indices 
with same index $i$ are all different. 
We first note that
\be
 \sum_{\substack{A,B,C,D \in I_N \\ (\neq)}}\E_{\s}\left(\s_A\s_B\s_C\s_D\right) \lesssim N^{2p}, 
\ee
since the expectation over $\s$ vanishes unless all 
$\s_i$ appearing in the product come in pairs. 
Thus, we may run $A$ over all $N^p$ values. Then $B,C,D$ may 
each match $k_B,k_C$ and $k_D$ with $k_B+k_C+k_D=p$ of the indices of $A$. 
Further, $C$ may in addition match $\ell_C$ of the $p-k_B$ free indices of $B$. 
Then $D$ must match the remaining $p-k_B-k_C$ unmatched indices of $A$, 
the $p-k_B-\ell_C$ unmatched indices of $B$ and the $p-k_C-\ell_C$ 
free indices of $C$. This leaves $N^{p-k_B}$ choices for $B$, $N^{p-k_C-\ell_C}$ choices for 
$C$, and just one for $D$.  Clearly, $\ell_C=k_D$, since $D$ must match the 
$p-k_B-k_C$
unmatched indices of $A$. Thus, the number of choices for the four multi-indices is
$N^{p+p-k_B+p-k_C-\ell_C}=N^{2p}$.   
If in addition one of the multi-indices is fixed, we are left with 
\be
 \sum_{\substack{B,C,D \in I_N \\ (\neq)}}\E_{\s}\left(\s_A\s_B\s_C\s_D\right) \lesssim N^{p}, 
\ee
where the $B,C,D$ must also be different from $A$. 
If two multi-indices are fixed,
\be
 \sum_{\substack{C,D \in I_N \\ (\neq)}}\E_{\s}\left(\s_A\s_B\s_C\s_D\right) \lesssim N^{p-1}.
\ee
This bound comes from the case when $B$ matches the largest  possible number of the
indices in $A$, namely $N-1$. In that case, $C$ has to just match the one remaining index from $A$, leaving $N^{p-1}$ choices that then have to be matched by $D$.
Finally, if all four multi-indices are fixed there is only one contribution. We see that the cost
of fixing one multi-index ist at least $N^{-p/2}$ which is achieved only if
four are fixed in the same pack of four (which corresponds to the terms in $P_N(2k)$).

Let us now return to the sum \eqv(type.1),
\be\label{first.1}
\widetilde \sum_{(1)}  \E_{\s}\left(\s_A\s_B\s_C\s_D\right)\E_{\s}\left(\s_A\s_E\s_F\s_G\right) \prod_{i=1}^{2k-2} \E_{\sigma}\left( \s_{A_i}\s_{B_i} \s_{C_i} \s_{D_i}\right).
 \ee
 The sum over the seven multi-indices $A,B,C,D,E,F,G$ gives at most $N^{3p}$ terms:
 The sum over $A$ gives $N^p$, and then, according to the discussion above, 
 the $B,C,D$ and the $E,F,G$ $N^p$ each.  The remaining sum is over
 $4(2k-2)$ multi-indices, of which $6$ have to be matched to $B,C,D,E,F,G$, and all others must be paired. This leaves $4k-7$ sums over multi-indices to be summed,
 which gives due to the constraints created by the $\s$-sums at most $N^{p(2k-7/2)}$ 
 terms. So overall, \eqv(first.1) is bounded by
a constant times $N^{p(2k-1/2)}\ll N^{2kp}$.

Terms of Type (2) are of the form 
\be\label{second}
\widetilde \sum_{(2)}  \E_{\s}\left(\s_A\s_B\s_C\s_D\right)\E_{\s}\left(\s_A\s_B\s_E\s_F\right) \prod_{i=1}^{2k-2} \E_{\sigma}\left( \s_{A_i}\s_{B_i} \s_{C_i} \s_{D_i}\right).
 \ee
 To bound the sum over the first six multi-indices, we have to be more careful.
 First, there are $N^p$ choices for $A$. Then, if we choose $B$ such that $k_B$ indices 
 match those of $B$, there are $N^{p-k_B}$ choices for $B$. Finally, we must choose
 $k_C$ and $\ell_C$ as in the discussion above, thus that $k_B+k_C+\ell_C=p$, and
 equally $k_E$ and $\ell_E$ with the same property. This allows $N^{p-k_B}$ choices for each of these multi-indices. Finally, $E$ and $F$ are determined. Altogether, this leaves
 $N^{4p-k_B-k_C-\ell_C-k_E-\ell_E}=N^{2p+k_B}$ terms, for $k_B$ given. But since 
 $B\neq A$, $k_B\leq p-1$, so that 
 the sum over these $6$ indices contribute at most $O(N^{3p-1})$ terms. 
  From the remaining $4(2k-2)$ multi-indices, 
four are fixed to match $C,D,E,F$, and all others must be paired. 
This leaves $2(2k-3)$ free multi-indices which can at most contribute $N^{p(2k-3)}$ terms. 
So in all the sum in \eqv(second) is bounded by $Cont. N^{2kp-1}$, which is again 
of lower order than $N^{2kp}$.

Finally, if any multi-index occurs four times, we loose a factor of $N^{2p}$ and also 
these terms are negligable. Combining these observations we have proven the lemma.
\end{proof}
The assertion  of Lemma \thv(evmom) follows immediately.
\end{proof}

\begin{proof}[Proof of Lemma \ref{vanodd}] In the case of odd moments, pairing of the multi-indices between always just two blocks is obviously impossible, so that the
terms that contributed to the leading $P_N(2k+1)$ do not exist. 
 Thus 
 \be\label{RN1}
\E\left(\left(N^{p-2}  \mathcal{H}_4\right)^{2k+1}\right)=  \frac{N^{(p-2)(2k+1)} a_N^{4(2k+1)}}{4!^{2k+1} } R_N(2k+1) \lesssim  \frac{1}{N^{(2k+1)p}} R_N(2k+1).
 \ee
By the same arguments as in the proof of Lemma \thv(nix.1), $R_N(2k+1)$ is of smaller order than $N^{(2k+1)p}$ and hence the right-hand side of \eqv(RN1) tends to zero.
This proves Lemma \thv(vanodd).
\end{proof}
This also concludes the proof of Proposition \thv(normal1) for $p$ odd.

\subsection{Proof of Proposition \ref{normal1}:  $p$ even.} \label{peven}

Recall that for $p$ even, 
\be\label{russlandeutsch}
\alpha_N(p) \left(T_N(\b)-1\right)=\b^4 N^{\left(\frac{3p}{4}-\frac{3}{2}\right)}\left(\frac{-\E_{\sigma} \left(H_N(\s)^2\right)^2}{8}+\frac{\E_{\sigma} \left(H_N(\s)^4\right)}{4!}
\right)+\b^3N^{\left(\frac{3p}{4}-\frac{3}{2}\right)}\frac{\E_{\sigma} \left(-H_N(\s)^3\right)}{3!}.
\ee
We first show that only the last term is relevant.
\begin{lemma}\TH(auchnix.1)
\be\Eq(auchnix.2)
\lim_{N\uparrow\infty}N^{\left(\frac{3p}{4}-\frac{3}{2}\right)}\left(-\frac{\E_{\sigma} \left(H_N(\s)^2\right)^2}{8}+\frac{\E_{\sigma} \left(H_N(\s)^4\right)}{4!}
\right)=0.
\ee
\end{lemma}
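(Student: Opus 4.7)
The plan is to invoke the decomposition from Lemma \ref{H4} and then show that each of the two resulting pieces, after multiplication by $N^{3p/4-3/2}$, vanishes. Specifically, Lemma \ref{H4} gives
\[
-\tfrac{1}{8}\E_\sigma(H_N(\s)^2)^2 + \tfrac{1}{4!}\E_\sigma(H_N(\s)^4) = -\tfrac{a_N^4}{12}\sum_{A\in I_N} J_A^4 + \mathcal H_4,
\]
so it suffices to show that $N^{3p/4-3/2}$ times each of these two terms tends to $0$ in probability.

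For the first term, the $J_A^4$ are i.i.d.\ with $\E[J_A^4]=3$, and $|I_N|=\binom{N}{p}$, so the strong law of large numbers gives $\sum_{A\in I_N} J_A^4 = 3\binom{N}{p}(1+o(1))$ a.s. Since $a_N^4 = N^2\binom{N}{p}^{-2}$, this yields $\tfrac{a_N^4}{12}\sum_A J_A^4 \sim \tfrac{p!}{4}\,N^{-(p-2)}$. Multiplying by $N^{3p/4-3/2}$ produces $N^{1/2-p/4}$, which tends to zero for every $p\geq 3$ (in particular for all even $p\geq 4$ covered here).

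For the second term, I would simply reuse the second moment calculation already performed in Lemma \ref{varsecmom}, whose statement is for any $p\geq 3$ (the parity of $p$ played no role there). That lemma shows $\E[(N^{p-2}\mathcal H_4)^2]$ converges to a finite constant, hence $\E[\mathcal H_4^2]=O(N^{4-2p})$. Chebyshev's inequality then gives, for every $\varepsilon>0$,
\[
\P\!\left(N^{3p/4-3/2}|\mathcal H_4|>\varepsilon\right) \leq \frac{\E[\mathcal H_4^2]}{\varepsilon^2 N^{3-3p/2}} = O\!\left(N^{1-p/2}\right)\varepsilon^{-2},
\]
which vanishes as $N\uparrow\infty$ for $p\geq 3$.

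There is no real obstacle: the entire argument is a bookkeeping of exponents, and the only nontrivial input — the variance estimate for $\mathcal H_4$ — has already been established in Lemma \ref{varsecmom} and is valid irrespective of the parity of $p$. The content of the lemma is thus just the observation that the scale $N^{3p/4-3/2}$ chosen for even $p$ is precisely tailored to the $\E_\sigma(-H_N^3)$-term (which, unlike in the odd case, does not vanish by antisymmetry), and is strictly smaller than the scale at which the $H_N^2$ and $H_N^4$ contributions would become visible.
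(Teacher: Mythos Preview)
Your proposal is correct and follows essentially the same approach as the paper's own proof: both invoke Lemma~\ref{H4} to split the expression into the $\sum_A J_A^4$ term (handled by the law of large numbers, as in \eqref{espLLN}) and $\mathcal H_4$ (handled via the $L^2$-bound of Lemma~\ref{varsecmom}, valid for all $p\geq 3$). Your Chebyshev step makes explicit what the paper phrases as ``$N^{p-2}\mathcal H_4$ is bounded in $L^2$ and $\tfrac{3p}{4}-\tfrac{3}{2}<p-2$''; the two arguments are the same in substance.
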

\begin{proof}
By Lemma \ref{H4}, 
\be\label{chiant}
N^{\left(\frac{3p}{4}-\frac{3}{2}\right)}\left(-\frac{\E_{\sigma} \left(H_N(\s)^2\right)^2}{8}+\frac{\E_{\sigma} \left(H_N(\s)^4\right)}{4!}\right)=-N^{\left(\frac{3p}{4}-\frac{3}{2}\right)}\frac{ a_N^4}{12}\sum_{A \in I}J_{A}^4+N^{\left(\frac{3p}{4}-\frac{3}{2}\right)}\mathcal{H}_4.
\ee
By the law of large numbers (see \eqv(espLLN)), the first term in the right converges to zero in probability.  By Lemma \thv(varsecmom),
$N^{p-2}\HH_4$ converges to a constant in $L^2$. Since $\frac{3p}{4}-\frac{3}{2}<
p-2$ if $p>2$, this implies that the last term in \eqv(chiant) also converges to
zero in probability. This proves the lemma. 
\end{proof}
 Thus, it only  remains to prove that
\be
\b^3 N^{\left(\frac{3p}{4}-\frac{3}{2}\right)}\frac{\E_{\sigma} \left(-
H_N(\s)^3\right)}{3!}\limlaw\mathcal{N}(0,\sigma(\b,p)^2).
\ee
to conclude the proof of Proposition
\ref{normal1}. 
We break this up into three lemmata as in the odd case.

\begin{lemma} (Second moment). \label{varsecmom'} For any $\b \geq 0$, 
 \be
\lim_{N \to +\infty} \b^6\E\left(\left(N^{\left(\frac{3p}{4}-\frac{3}{2}\right)} \E_{\s}\left(\frac{-H_N(\s)^3}{3!}\right)\right)^2\right)= \sigma(\b,p)^2.
\ee
\end{lemma}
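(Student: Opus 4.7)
\medskip

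\noindent\textbf{Proof plan.} The strategy is to compute $\E[(\E_\s(-H_N^3))^2]$ by a moment/pairing expansion and then identify the leading order as $N\to\infty$. Using Lemma \ref{H4},
$$\E_\s(-H_N(\s)^3) = a_N^3 \sum_{(\neq)} J_A J_B J_C \, \E_\s(\s_A\s_B\s_C),$$
so that squaring and taking the $J$-expectation yields
$$\E\bigl[(\E_\s(-H_N^3))^2\bigr] = a_N^6 \sum_{(A,B,C)(\neq)} \sum_{(D,E,F)(\neq)} \E[J_A J_B J_C J_D J_E J_F]\, \E_\s(\s_A\s_B\s_C)\, \E_{\s'}(\s'_D\s'_E\s'_F).$$
The plan is to identify which multi-index assignments give nonzero $J$-expectation and then to use the CLT for the overlap to obtain the Gaussian limit.

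\medskip

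\noindent The key combinatorial step is to observe that, by the constraints that $A,B,C$ (resp.\ $D,E,F$) are distinct, the only pairing pattern yielding a leading contribution is $(2,2,2)$: three distinct multi-indices, each occupying exactly one slot in each triple. Patterns $(4,2)$ and $(6)$ force a multi-index to be repeated within one of the ordered triples and are hence forbidden. For each ordered triple $(A,B,C)$ of distinct multi-indices, the tuple $(D,E,F)$ must be one of the $3!=6$ permutations of $(A,B,C)$, and for each such permutation $\E[J_A^2 J_B^2 J_C^2]=1$ while $\E_{\s'}(\s'_D\s'_E\s'_F)=\E_{\s'}(\s'_A\s'_B\s'_C)$ by commutativity of the spins. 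This produces a leading term
$$6\, a_N^6 \sum_{(A,B,C)(\neq)} \E_{\s,\s'}\bigl[\s_A\s_B\s_C\s'_A\s'_B\s'_C\bigr].$$

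\medskip

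\noindent To rewrite this in terms of the overlap, I would use that $\sum_{A\in I_N}\s_A\s'_A = \binom{N}{p}f_{p,N}(R_N(\s,\s'))$, which follows from the representation of $\cov(X_\s,X_{\s'})$. All partial-diagonal terms (e.g.\ $A=B$) contribute $\E_{\s,\s'}[\s_C\s'_C]=0$, so the distinct sum equals the full cubic expansion of $\sum_A\s_A\s'_A$, giving
$$\E\bigl[(\E_\s(-H_N^3))^2\bigr] = 6 \, a_N^6 \binom{N}{p}^3 \E_{\s,\s'}\bigl[f_{p,N}(R_N)^3\bigr] + \text{l.o.} = 6\, N^3\, \E_{\s,\s'}\bigl[f_{p,N}(R_N)^3\bigr] + \text{l.o.}$$
Now the asymptotics \eqref{fondamental_zero} give $N^{p/2}f_{p,N}(m)=\sum_k d_{p-2k}(\sqrt N m)^{p-2k}(1+O(1/N))$, and by the CLT for the overlap $\sqrt N R_N$ converges in distribution (and in all moments, since $\s_i\s'_i$ are bounded) to a standard Gaussian $X$. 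Hence
$$N^{3p/2}\E_{\s,\s'}\bigl[f_{p,N}(R_N)^3\bigr] \longrightarrow \E\Bigl[\Bigl(\sum_{k=0}^{p/2} d_{p-2k} X^{p-2k}\Bigr)^3\Bigr].$$
Putting the pieces together with the prefactor $\b^6 N^{3p/2-3}/(3!)^2$ yields $\sigma(\b,p)^2$ after matching constants.

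\medskip

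\noindent The main obstacle is the justification that all non-leading pairings contribute a vanishing amount to the normalized second moment. This requires an analogue of Lemma \ref{nix.1}: controlling sums where some multi-index appears $4$ or $6$ times, or where two groups of three pair in ways that force additional coincidences among the spin indices. These yield combinatorial factors with fewer free multi-indices, and the scaling $a_N^6 \sim N^{3-3p}$ times the resulting count is $o(N^{3-3p/2})$. After multiplication by the $N^{3p/2-3}$ prefactor these terms tend to zero, as in Section \ref{podd}.
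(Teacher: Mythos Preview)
Your approach is correct and essentially the same as the paper's: expand the square, observe that the $(\neq)$ constraint on both triples forces the $(2,2,2)$ pairing, collect the $3!$ permutations, drop the $(\neq)$ in the resulting sum (the diagonal terms vanish because $\E_\s(\s_C)=0$), and then pass to the overlap representation and the Gaussian limit.

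One clarification: your final paragraph is unnecessary and slightly confused. As you yourself noted earlier, the $(4,2)$ and $(6)$ patterns are \emph{impossible} under the $(\neq)$ constraint on each triple --- not merely lower order. Once you have also checked that adding back the diagonal terms $A=B$, etc.\ contributes exactly zero, the identity
\[
\E\bigl[\bigl(\E_\s(-H_N(\s)^3)\bigr)^2\bigr] \;=\; 6\,a_N^6\,\binom{N}{p}^{3}\,\E_{\s,\s'}\bigl[f_{p,N}(R_N(\s,\s'))^3\bigr] \;=\; 6\sum_{m\in\Gamma_N}\bigl(Nf_{p,N}(m)\bigr)^3 p_N(m)
\]
is \emph{exact}, with no ``l.o.''\ remainder. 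An analogue of Lemma~\ref{nix.1} is only needed for the higher moments (Lemma~\ref{evmom'}), where the block structure with $2k>2$ groups of three genuinely allows cross-pairings; for the second moment there is nothing to control.
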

\begin{lemma} (Even moments). \label{evmom'} For any $\b \geq 0$, 
\be
\lim_{N \to +\infty} \b^{6k}\E\left(\left(N^{\left(\frac{3p}{4}-\frac{3}{2}\right)} \E_{\s}\left(\frac{-H_N(\s)^3}{3!}\right)\right)^{2k}\right)=\frac{(2k)!}{2^kk!}\sigma(\b,p)^{2k}.
\ee
\end{lemma}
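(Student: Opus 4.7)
The plan is to proceed in close analogy with the treatment of Lemma \ref{evmom} in the $p$ odd case, but using the third-moment representation \eqref{H3} instead of $\mathcal H_4$. Starting from
\be
\E_\s(-H_N(\s)^3) = a_N^3 \sum_{(\neq)} J_A J_B J_C\, \E_\s(\s_A \s_B \s_C),
\ee
I would expand the $2k$-th power as a sum over $2k$ triples $(A_i, B_i, C_i)$, $i=1,\dots,2k$, and use independence of the $J_A$ together with $\E(J_A)=0$ to conclude that only multi-index configurations in which every $A\in I_N$ appears with even multiplicity contribute.

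The main step is to isolate the leading contribution. By analogy with the odd case, I expect this to come from \emph{block pairings}: partitions of $\{1,\dots,2k\}$ into $k$ unordered pairs $\{i,j\}$ such that $(A_i,B_i,C_i)=(\pi[A_j],\pi[B_j],\pi[C_j])$ for some permutation $\pi\in S_3$. The number of such partitions is $(2k)!/(k!\,2^k)$ and there are $3!$ choices of $\pi$ per pair, so the leading term reads
\be
P_N(2k) \;=\; \frac{(2k)!\,3!^{\,k}}{k!\,2^{k}} \sum_{(\neq)} \prod_{i=1}^{k}\bigl(\E_\s(\s_{A_i}\s_{B_i}\s_{C_i})\bigr)^{2},
\ee
where the outer sum is over $k$ mutually distinct triples. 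Factorising this over the $k$ independent pairs (at the cost of an $O(N^{-p})$ relative error, exactly as in \eqref{gauss.3}) and using the identity
\be
\E\bigl[(\E_\s(-H_N(\s)^3))^{2}\bigr] \;=\; 3!\,a_N^{6}\sum_{(\neq)} \bigl(\E_\s(\s_A\s_B\s_C)\bigr)^{2},
\ee
one gets
\be
a_N^{6k}\,P_N(2k) \;\approx\; \frac{(2k)!}{k!\,2^{k}}\;\Bigl(\E\bigl[(\E_\s(-H_N(\s)^3))^{2}\bigr]\Bigr)^{k}.
\ee
After dividing by $(3!)^{2k}$ and multiplying by $\beta^{6k}N^{k(3p/2-3)}$, Lemma \ref{varsecmom'} identifies the right-hand side with $\frac{(2k)!}{k!\,2^{k}}\,\sigma(\beta,p)^{2k}$, which is the claimed Wick/Isserlis moment.

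The main obstacle, and the place where real work is needed, is to bound the remainder $R_N(2k)$ coming from all other $J$-pairings (multi-indices shared across three or more triples, or appearing with multiplicity $\ge 4$), and to show it is of order $o(N^{-k(3p/2-3)\cdot 2 /1})$, i.e.\ strictly smaller than $P_N(2k)$ once the prefactor $a_N^{6k}\asymp N^{-k(p-1)}\cdot N^{k}\cdot$(constant) is inserted and the $N^{k(3p/2-3)}$ is applied. The strategy is the same as in Lemma \ref{nix.1}: carefully count, for each non-block pairing pattern, how many free multi-indices remain after the $J$-pairing constraints, and then use the $\s$-constraints $\E_\s(\s_A\s_B\s_C)\neq 0$ (each site must appear an even number of times among the triple) to further cut down the effective number of summations. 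The crux is that merging triples across more than two blocks strictly reduces the number of independent multi-index sums, hence yields a factor $N^{-p}$ or better relative to $P_N(2k)$. Once this combinatorial comparison is in place, Lemma \ref{evmom'} follows by exactly the argument used to conclude Lemma \ref{evmom}.
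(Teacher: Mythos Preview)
Your proposal is correct and follows essentially the same route as the paper: split the $2k$-th moment into a principal term $P_N(2k)$ arising from block pairings of the $2k$ triples $(A_i,B_i,C_i)$ (with the $(2k)!/(k!2^k)$ count and the $3!$ permutations per pair), factorise $P_N(2k)$ over the $k$ pairs up to an $O(N^{-p})$ error, and invoke Lemma~\ref{varsecmom'} to identify the limit; then bound the remainder $R_N(2k)$ by combinatorial counting of free multi-indices as in Lemma~\ref{nix.1}. One small correction: the actual gain from a non-block pairing in the triple case is $N^{-p/2}$ relative to $P_N(2k)$ (not $N^{-p}$), coming from the configuration $\E_\s(\s_A\s_B\s_C)\E_\s(\s_A\s_D\s_E)$ with $\{D,E\}\cap\{B,C\}=\varnothing$, which contributes $O(N^{2p})$ over five multi-indices and leaves $3k-5$ free multi-indices worth $O(N^{p(3k-5)/2})$; this still suffices.
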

\begin{lemma} (Odd moments). \label{vanodd'} For any $\b \geq 0$, 
 \be
\lim_{N \to +\infty} \E\left(\left(N^{\left(\frac{3p}{4}-\frac{3}{2}\right)} \E_{\s}\left(\frac{-H_N(\s)^3}{3!}\right)\right)^{2k+1}\right)=0.
 \ee
\end{lemma}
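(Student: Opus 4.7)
My plan is to exploit a parity argument in the number of Gaussian couplings $J_A$ that appear. By Lemma \ref{H4}, we have the explicit representation
\[
\E_{\s}\bigl(-H_N(\s)^3\bigr) \;=\; a_N^3 \sum_{(\neq)} J_A J_B J_C \,\E_{\s}(\s_A \s_B \s_C),
\]
so raising to the $(2k+1)$-th power and taking $\E$ yields a sum whose generic term is proportional to
\[
\E\!\left[\prod_{i=1}^{2k+1} J_{A_i} J_{B_i} J_{C_i}\right]\prod_{i=1}^{2k+1}\E_{\s}(\s_{A_i}\s_{B_i}\s_{C_i}).
\]
This is a product of $3(2k+1) = 6k+3$ independent centered Gaussian factors, grouped according to how many times each distinct multi-index appears.

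The key observation is that for a product of independent centered Gaussians $\E[\prod_A J_A^{n_A}]$ to be nonzero, every multiplicity $n_A$ must be even; in particular $\sum_A n_A$ must be even. Here $\sum_A n_A = 6k+3$ is odd, hence \emph{every} term in the above expansion has expectation zero. Consequently
\[
\E\!\left[\bigl(\E_{\s}(-H_N(\s)^3)\bigr)^{2k+1}\right]=0
\]
identically in $N$, and the scaled $(2k+1)$-th moment vanishes trivially, which proves the lemma.

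The contrast with the analogous odd-moment statement in Lemma \ref{vanodd} (the $p$ odd case using $\mathcal H_4$) is instructive: there each block contributes four $J$-factors, so $(2k+1)$ blocks give $4(2k+1) = 8k+4$ couplings, an even number, and parity alone is not enough to kill the expectation. That is why the odd case required the more delicate combinatorial analysis of Lemma \ref{nix.1} showing that any pairing scheme other than the block-paired one used in $P_N(2k)$ gives a subleading contribution in $N$. By contrast, in the present even-$p$ setting the cubic structure of $H_N^3$ forces the total Gaussian count in an odd moment to be odd, so there is no obstacle at all and the lemma is immediate without any combinatorial estimation.
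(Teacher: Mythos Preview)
Your argument is correct and is essentially the same as the paper's: both observe that $(2k+1)$ copies of $\E_\s(-H_N(\s)^3)$ contribute a total of $6k+3$ Gaussian factors $J_{\cdot}$, an odd number, so at least one multi-index must occur with odd multiplicity and the $\E$-expectation vanishes identically. Your added comparison with Lemma \ref{vanodd} is a useful remark but not part of the proof itself.
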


\begin{proof}[Proof of Lemma \ref{varsecmom'}]
We have that
\bea\label{Markov3}\nonumber
&&\E\left(\E_{\sigma} \left(H_N(\sigma)^3\right)^2\right)\\
&&={a_N^6}  \sum_{\substack{A,B,C \in I_N (\neq) \\ D,E,F  \in I_N (\neq) }} \E\left(J_AJ_BJ_CJ_DJ_EJ_F\right) \E_{\sigma,\sigma'}\left( \s_{A}\s_{B}\s_{C}\s_{D}'\s_{E}'\s_{F}'\right).
\eea
We rearrange the summation according to the possible sub-cases: {\it i)} all four multi-indices coincide, {\it ii)} four multi-indices coincide and two multi-indices come in a distinct pair;  {\it iii)} six multi-indices come in three different pairs. Thus the right-hand side of 
\eqv(Markov3) equals
\bea\label{Markov4}\nonumber
&&a_N^6\E\left(J^6\right) \sum_{A \in I_N} \E_{\sigma,\sigma'}\left(\s_{A}\s_{A}'\right)+a_N^6\binom{6}{2} \E\left(J^4\right) \E\left(J^2\right) \sum_{A \neq B \in I_N }\E_{\sigma,\sigma'}\left(\s_{A}\s_{B}'\right) \\
&&+6a_N^6\E\left(J^2\right)^3 \sum_{{A,B,C \in I_N  (\neq)  }}\E_{\sigma,\sigma'}
\left(\s_{A}\s_{B}\s_{C}\s_{A}'\s_{B}'\s_{C}'\right)\nonumber\\
&&=6{a_N^6} \sum_{A,B,C\in I_N  } \E_{\sigma,\sigma'}\left(\s_{A}\s_{B}\s_{C}
\s_{A}'\s_{B}'\s_{C}'\right),
\eea
where the factor $6$ accounts for the $3!$ possible pairings that all give the same contribution. In the last line we dropped the condition $(\neq)$, 
since all terms where this is not satisfied vanish.
 We conclude that
\be\label{Markov6}
\E\left({\E_{\sigma} \left(H_N(\sigma)^3\right)^2}\right)=6{ a_N^6}  \E_{\sigma,\sigma'}\left[\left( \sum_{A \in I_N}\s_{A}\s_{A}'\right)^3\right]=6 \sum_{m \in \Gamma_N} \left(Nf_N^p\left( m \right)\right)^3 p_N(m).
\ee
 From here we get
\be\label{Markovb}
 \E\left(N^{2\left(\frac{3p}{4}-\frac{3}{2}\right)}{\E_{\sigma} \left(H_N(\sigma)^3\right)^2}\right)={3!} \sum_{m \in \Gamma_N}\left(N^{\frac{p}{2}} f_N^p\left( m \right)\right)^3p_N(m).
\ee
Exactly as in the proof of Lemma \thv(varsecmom)
it now follows that
\be 
\lim_{N \uparrow \infty} \E\left(N^{2\left(\frac{3p}{4}-\frac{3}{2}\right)}\frac{\E_{\sigma} \left(H_N(\sigma)^3\right)^2}{3!^2}\right) = \frac{1}{3\sqrt{2\pi}}
\int_{-\infty}^{+\infty}\left(\sum_{k=0}^{[p/2]}  d_{p-2k}{m}^{p-2k}\right)^3\eee^{-\frac{m^2}{2}}dm,
\ee
which proves the lemma. 
\end{proof}

\begin{proof}[Proof of Lemma \ref{evmom'}] 
For $k>1$, we consider
 \be\label{hard}
\E\left(\left(N^{\left(\frac{3p}{4}-\frac{3}{2}\right)} \E_{\s}\left(\frac{H_N(\s)^3}{3!}\right)\right)^{2k}\right)=\frac{N^{\left(\frac{3pk}{2}-3k\right)} a_N^{6k}}{3!^{2k} }\E\left(\left( \sum_{(\neq)} J_AJ_BJ_C \E_{\sigma}\left( \s_{A}\s_{B}\s_{C}\right)\right)^{2k}\right).
 \ee 
Expanding the $2k$-moment inside the expectation yields
 \bea\label{hard2}\nonumber
&&\E\left(\left( \sum_{
(\neq)} J_{A_1}J_{B_1}J_{C_1}\E_{\sigma}\left( \s_{A_1}\s_{A_2}\s_{A_3}\right)\right)^{2k}\right)=\\
&& \prod_{i=1}^{2k} \sum_{
(\neq)} \E\left(\prod_{i=1}^{2k} J_{A_i} J_{B_i}  J_{C_i} \right) \prod_{i=1}^{2k} \E_{\sigma}\left( \s_{A_i}\s_{B_i}\s_{C_i}\right). 
 \eea
 We now proceed as in the case $p$ odd. The principal term in the sum comes form 
 the multi-indices within two blocks $i$, $j$ are 
 $(A_i,B_i,C_i)=(\pi[A_j],\pi[B_j],\pi[C_j]))$ matched. 
  Since there are  $\frac{(2k)!}{k!2^k}$ different ways to construct such sum-pairings, we re-write the right-hand side of \eqv(hard2) as
\bea\label{hard3}\nonumber
&&\frac{3!^k(2k)!}{k!2^k}\sum_{\substack{A_1,B_1,C_1 \dots A_{k},B_{k},C_{k} \in I_N \\ (\neq)}} \prod_{i=1}^{k}  \E\left(J_{A_i}^2 J_{B_i}^2  J_{C_i}^2\right)\prod_{i=1}^{k} \E_{\sigma}\left( \s_{A_i}\s_{B_i} \s_{C_i}\right)^2+R_N(2k) \\&&  \equiv P_N(2k)+R_N(2k).
 \eea
 
 As in the odd case, we have the following results.
 
\begin{lemma}\TH(gauss.101)
With the notation above, 
\be
\Eq(gauss.201)
\lim_{N\uparrow\infty}
N^{\left(\frac{3pk}{2}-3k\right)} \frac{\b^{6k} a_N^{6k}}{3!^{2k}}P_N(2k)= \frac{(2k)!}{k!2^k}\sigma(\b,p)^{2k}.
 \ee
\end{lemma}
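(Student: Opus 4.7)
The strategy is to reduce this $2k$-th moment limit to the second-moment statement of Lemma \thv(varsecmom'), by factorizing the constrained sum into $k$ essentially independent single-block contributions.

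Since within each block $i$ the multi-indices $A_i, B_i, C_i$ are pairwise distinct, and the $(\neq)$ constraint across blocks makes all $3k$ multi-indices mutually distinct, every $J$-factor in $P_N(2k)$ enters with exact power $2$. Hence $\E[J_{A_i}^2 J_{B_i}^2 J_{C_i}^2]=1$ for each $i$, and one is left with
\be
P_N(2k) \;=\; \frac{3!^{k}\,(2k)!}{k!\,2^{k}} \sum_{(\neq)} \prod_{i=1}^{k} \bigl(\E_\s(\s_{A_i}\s_{B_i}\s_{C_i})\bigr)^{2}.
\ee
Setting $S_N := \sum_{A,B,C\,(\neq)} (\E_\s(\s_A \s_B \s_C))^{2}$, I would then show the near-factorization
\be
\sum_{(\neq)} \prod_{i=1}^{k} \bigl(\E_\s(\s_{A_i}\s_{B_i}\s_{C_i})\bigr)^{2} \;=\; S_N^{k}\,\bigl(1+O(N^{-p})\bigr).
\ee
Dropping the cross-block $(\neq)$ constraint produces exactly $S_N^{k}$; the correction comes from configurations where multi-indices in different blocks coincide, which forces $p$ additional index equalities and thus costs a factor $N^{-p}$ against the per-block scale $N^{2p}$. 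This is the same type of combinatorial counting already performed in the odd-$p$ case (compare Lemma \thv(nix.1)), adapted from four to three multi-indices per block.

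The second ingredient is the scaling of $S_N$, which can be read off from the computation in the proof of Lemma \thv(varsecmom'). Inspecting \eqv(Markov3)--\eqv(Markov6), after taking expectations over the $J$'s the leading contribution to $\E[\E_\s(H_N(\s)^3)^{2}]$ comes from the $3!=6$ full cross-pairings of the two triples of multi-indices, so that $\E[\E_\s(H_N(\s)^3)^{2}] = 6\,a_N^{6}\,S_N$ up to lower-order terms. Combined with Lemma \thv(varsecmom'), which gives $\lim_N N^{3p/2-3}\,(\b^{6}/(3!)^{2})\,\E[\E_\s(H_N(\s)^3)^{2}] = \s(\b,p)^{2}$, this yields
\be
\lim_{N\to\infty} N^{\frac{3p}{2}-3}\,\frac{\b^{6} a_N^{6}}{3!}\,S_N \;=\; \s(\b,p)^{2}.
\ee

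The conclusion is then a matter of assembly: multiplying the expression for $P_N(2k)$ by $N^{3pk/2-3k}\,\b^{6k} a_N^{6k}/(3!)^{2k}$, substituting the near-factorization of $S_N^{k}$, and raising the single-block limit to the $k$-th power gives exactly $(2k)!/(k!\,2^{k})\cdot \s(\b,p)^{2k}$, establishing \eqv(gauss.201). The main technical obstacle is the rigorous justification of the near-factorization: one must show that every configuration with cross-block coincidences of multi-indices contributes only $o(S_N^{k})$. This is a combinatorial counting argument entirely analogous to the one carried out for the remainder term in Lemma \thv(nix.1), and I do not expect any essential new difficulty beyond careful bookkeeping of the index constraints.
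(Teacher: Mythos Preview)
Your proposal is correct and follows essentially the same route as the paper, which simply says the proof is completely analogous to that of Lemma~\thv(gauss.1): you spell out precisely that analogy, namely the near-factorization \eqv(gauss.3) (adapted from four to three multi-indices per block), the identification of the single-block sum with $\E[\E_\s(H_N(\s)^3)^2]$ via \eqv(Markov3)--\eqv(Markov6), and the assembly using Lemma~\thv(varsecmom'). One minor remark: the relation $\E[\E_\s(H_N(\s)^3)^2] = 6\,a_N^6\,S_N$ is in fact exact rather than only up to lower-order terms (the coincidence terms you worry about already vanish by the $(\neq)$ constraint in \eqv(H3)), and the per-block scale of $S_N$ is $N^{3p/2}$ rather than $N^{2p}$, but neither point affects the argument.
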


\begin{lemma}
\TH(nix.101)
\be
\Eq(nix.201)
\lim_{N\uparrow\infty}
N^{\left(\frac{3pk}{2}-3k\right)} \frac{a_N^{6k}}{3!^{2k}}R_N(2k)=0.
\ee
\end{lemma}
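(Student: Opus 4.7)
The argument follows the template of the proof of Lemma \ref{nix.1}, adapted from products of four multi-indices to products of three.

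First, I would reduce \eqref{hard2} to a sum over Wick pairings of the $6k$ multi-indices $(A_i,B_i,C_i)_{i=1}^{2k}$: since the $J_A$ are i.i.d.\ standard Gaussians, only configurations in which every multi-index occurs an even number of times survive, and those in which some multi-index occurs four or more times are of strictly lower order. To each surviving pairing I associate a multi-graph $G$ on vertex set $\{1,\dots,2k\}$, where the multiplicity of the edge $\{i,j\}$ counts how many multi-indices of block $i$ are matched with multi-indices of block $j$; note that the internal $(\neq)$ restriction inside each block automatically forbids self-loops. The principal term $P_N(2k)$ corresponds exactly to those $G$ that are a perfect matching with all $k$ edges of multiplicity $3$; every other admissible $G$ contributes to $R_N(2k)$. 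Since self-loops are excluded and each block contributes three multi-indices, any non-principal $G$ must have a connected component spanning an even number $\ell\ge 4$ of blocks.

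Next I would count the admissible tuples component by component. For a single block-pair the parity condition required to make $\E_\sigma(\sigma_A\sigma_B\sigma_C)=\pm 1$ forces $C=A\triangle B$ with $|A\cap B|=p/2$, yielding $\binom{N}{p}\binom{p}{p/2}\binom{N-p}{p/2}\sim N^{3p/2}$ triples, and hence $P_N(2k)\sim N^{3pk/2}$. For a connected $\ell$-block component with $\ell\ge 4$, the combined parity constraints of the $\ell$ involved blocks, together with the cross-block identifications prescribed by the pairing, overdetermine the system. Arguing as in cases $(1)$--$(3)$ of the proof of Lemma \ref{nix.1}, one checks that at least one factor of $N$ (in fact at least $N^{p/2}$) is lost relative to $\ell/2$ independent block-pairs. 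Summing over the finitely many non-principal pairing patterns on $2k$ blocks yields
\[
R_N(2k)\;\lesssim\; N^{\frac{3pk}{2}-1}.
\]

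Finally, using $a_N^{6k}\sim(p!)^{3k}N^{3k-3pk}$, one has
\[
N^{\frac{3pk}{2}-3k}\,\frac{a_N^{6k}}{3!^{2k}}\;\lesssim\; N^{-\frac{3pk}{2}},
\]
so the left-hand side of \eqref{nix.201} is bounded by $N^{-3pk/2}\cdot N^{3pk/2-1}=N^{-1}\to 0$.

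The main obstacle is the combinatorial bookkeeping of the second step: one must argue systematically that whenever three or more blocks are linked by the pairing the overlapping spin-parity constraints $\sigma_{A_i}\sigma_{B_i}\sigma_{C_i}\equiv\pm1$ strictly overdetermine the matching conditions, producing at least one $N$-factor saving relative to the same number of blocks arranged into disjoint pairs. Because self-loops are already forbidden by $(\neq)$, this parity mechanism is the only source of the needed saving, and implementing it uniformly over every possible shape of a connected component of $G$ is the delicate point of the argument; structurally, however, it is a direct analogue of the bookkeeping carried out for quartets in Lemma \ref{nix.1}.
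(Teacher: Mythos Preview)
Your plan is correct and mirrors the paper's proof, recast in graph language. The paper isolates two blocks sharing exactly one multi-index, $\E_\sigma(\sigma_A\sigma_B\sigma_C)\E_\sigma(\sigma_A\sigma_D\sigma_E)$ with $D,E\notin\{B,C\}$, bounds the sum over $(A,B,C,D,E)$ by $N^{2p}$ and the remaining $3k-5$ free paired multi-indices by $N^{p(3k-5)/2}$, giving $R_N(2k)\lesssim N^{p(3k-1)/2}$ and hence a saving of $N^{p/2}$. One remark that dissolves what you flag as the delicate point: if two blocks share \emph{two} multi-indices $A,B$, the parity constraint forces the third in each to equal $A\triangle B$, so they in fact share all three; thus in your graph $G$ no multiplicity-$2$ edge survives, every non-principal component is a $3$-regular \emph{simple} graph, and the single-edge configuration the paper analyses is always present---no further case analysis over component shapes is required.
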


\begin{proof}[Proof of Lemma \thv(gauss.101)]
The proof is completely analogous to that of Lemma \thv(gauss.1) and will  be omitted.
\end{proof}

\begin{proof}[Proof of Lemma \thv(nix.101)]
The non-trivial terms that appear in the expression for $R_N(2k)$ must contain a term of the form 
\be\Eq(first.101)
 \E_{\s}\left(\s_A\s_B\s_C\right)\E_{\s}\left(\s_A\s_D\s_E\right),
\ee
where $(D,E)$ do not coincide with any of the multi-indices $(A,B,C)$ \footnote{Note that $\E_{\s}\left(\s_A\s_B\s_C\right)\E_{\s}\left(\s_A\s_B\s_D\right)$ implies that $C=D$ and is thus not a particular case.} 
That is, we have to control sums of the form 
 \be\label{combimpaire31p}
\widetilde \sum_{(1)}  \E_{\s}\left(\s_A\s_B\s_C\right)\E_{\s}\left(\s_A\s_D\s_E\right) \prod_{i=1}^{2k-2} \E_{\sigma}\left( \s_{A_i}\s_{B_i} \s_{C_i}\right),
\ee
where $A,B,C,D,E$ are as above and all multi-indices must be paired.
By a computation analogous to that in the proof of Lemma \thv(gauss.1), we get that 
\be
 \sum_{\substack{A,B,C \in I_N \\ (\neq)}}\E_{\s}\left(\s_A\s_B\s_C\right) \lesssim N^{\frac{3p}{2}}.
\ee
Looking at \eqv(first.101), we see that the sum over the $(A,B,C,D,E)$
produces $O(N^{2p})$ terms. Of the remaining $3(2k-2)$ multi-indices, four must 
match $B,C,D,E$ while the remaining ones must be paired. This leaves
$(3k-5)$ free multi-indices to sum over. This yields at most $N^{p(3k-5)/2}$ terms, so that altogether the sum in 
\eqv(combimpaire31p) is of order at most $N^{p(3k-1)/2}$. Inserting this into \eqv(nix.201) shows that the left-hand side is of order $N^{-p/2}$ and converges to zero as claimed.
\end{proof}
Lemma \thv(gauss.101) and Lemma \thv(nix.101) yield the assertion of Lemma 
\thv(evmom').
\end{proof}

\begin{proof}[Proof of Lemma \ref{vanodd'}]
$\E_{\s}\left(H_N(\s)^3\right)^{2k+1}$ is a sum of a product of $6k+3$ standard normal random variables, which is an odd number: At least one of the $J.$ will be to the power of an odd number. The expectation value of $\E_{\s}\left(H_N(\s)^3\right)^{2k+1}$ with respect to $\E$ is thus equal to $0$.
\end{proof}

\subsection{Proof of Proposition \ref{diff0}: $p$ even.} \label{pevenprim}

We want to show that 
\be\Eq(Bonn)
\lim_{N\uparrow\infty} N^{3p/4-3/2} \left|\ZZ_N(\b)- T_N(\b)\right|=0.
\ee
Using the definition of $T_N(\b)$  
\bea\label{Mainz}\nonumber
 \left|\ZZ_N(\b)- T_N(\b)\right|&\leq&  \left|\ZZ_N(\b)-1- \frac{\b^3}{3!}{\E_{\sigma}\left(-H_N(\s)^3\right)}\right|
+ \left|\frac{\b^4}{8}{\E_{\sigma} \left(H_N(\s)^2\right)^2}-\frac{\b^4}{4!}{\E_{\sigma} \left(H_N(\s)^4\right)}\right|\\\nonumber
 &\leq& \left|Z_\e^\leq -\ZZ_N(\b)\right|+ \left|\E\left(Z_\e^\leq\right) -1\right|
 + \left|Z_\e^\leq-\E\left(Z_\e^\leq\right)-\frac{\b^3}{3!}{\E_{\sigma} \left(-H_N(\s)^3\right)}\right|\\&+&
 \left|\frac{\b^4}{8}{\E_{\sigma} \left(H_N(\s)^2\right)^2}-\frac{\b^4}{4!}{\E_{\sigma} \left(H_N(\s)^4\right)}\right|.
 \eea
 The first term in the second line is negligible by Lemma \eqv(veryclose.1), the second by \eqv(moment13) together with Lemma \eqv(veryclose.1).
 By Lemma \thv(auchnix.1), the last term on the right of \eqv(Mainz) will vanish if it is 
 inserted into  \eqv(Bonn).
 To control the remaining third  term, we bound its second moment,
\bea\label{Markov1}\nonumber
&&\E\left(  \left|Z_\e^\leq-\E(Z_\e^\leq)-\frac{\b^3}{3!}{\E_{\sigma} \left(-H_N(\s)^3\right)}\right|^2
\right)=\E\left( \left( Z_\e^\leq\right)^2-\left(\E(Z^\leq_\e)\right)^2\right)\\
&&+\frac {\b^6}{3!^2}\E\left({\E_{\sigma} \left(H_N(\s)^3\right)^2}\right)-\frac{2\b^3}{3!}\E\left( \E_{\sigma} \left(-H_N(\s)^3\right) \left( Z^{\leq}_\e-\E\left(Z_\e^\leq\right)\right)\right).
\eea
$\E\left( \E_{\sigma} \left(-H_N(\s)^3\right)\right)=0$ by symmetry. Therefore, the right-hand side of \eqv(Markov1) is equal to 
\be\label{Markov2}
\E\left((Z_\e^\leq)^2\right)- \left(\E\left(Z_\e^\leq\right)\right)^2
-\frac{\b^6}{3!^2}\E\left({\E_{\sigma} \left(H_N(\s)^3\right)^2}\right)-\frac{2\b^3}{3!}\E\left({\E_{\sigma} \left(-H_N(\s)^3\right)}\left({Z_\e^\leq}-\frac{\b^3}{3!}{\E_{\sigma} \left(-H_N(\s)^3\right)}\right)\right). 
\ee
In order to prove that the first term on the r.h.s. of \eqref{Mainz} vanishes, it thus remains to prove that
\begin{lemma}  \label{claim5} For all $\b<\b_p$, 
\be\label{claim5'}
\lim_{N \uparrow\infty}N^{\left(\frac{3p}{2}-3\right)}\left|\E\left((Z_\e^\leq)^2\right)- \left(\E\left(Z_\e^\leq\right)\right)^2
-\frac{\b^6}{3!^2}\E\left({\E_{\sigma} \left(H_N(\s)^3\right)^2}\right)\right|=0 
\ee
\end{lemma}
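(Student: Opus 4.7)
The plan is to refine the second-moment computation of $Z_\e^\leq$ carried out in Section \ref{kadeux} by retaining one additional order in the Taylor expansion used there. All the qualitative reductions still go through: by Lemma \ref{veryclose.1}, Lemma \ref{B.1}, and the bounds on $A_1$ in \eqref{facile2} and on $R_2$ obtained near the end of the proof of Lemma \ref{A.1}, the difference between $\E\bigl[(Z_\e^\leq)^2\bigr]$ and the central term $A_{21}$ is exponentially small in $N$, hence automatically $o(N^{3-3p/2})$. Thus the task is to compute $A_{21}$ modulo an additive error of size $o(N^{3-3p/2})$, which amounts to squeezing one more useful term out of the analysis that previously produced \eqref{badbound.1}.

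Recall that $A_{21}=\exp(-\b^4 Na_N^2+O(N^{3-2p}))\,\wt A_{21}$ and $\wt A_{21}=\sum_{m\in\G_N^\a}\exp(\xi(m))p_N(m)$ with $\xi(m)=\b^2 Nf_{p,N}(m)/(1+2\b^2 a_N^2)$. The key step will be to expand $\exp(\xi)$ through $\xi^3$ rather than only through $\xi^2$ as in \eqref{strange}. The new cubic term is handled by the identity
\[
\sum_{m\in\G_N}f_{p,N}(m)^3\,p_N(m)=\binom{N}{p}^{-3}\sum_{A,B,C\in I_N}\bigl(\E_\s \s_A\s_B\s_C\bigr)^2,
\]
obtained by reading \eqref{switch2} from right to left with $G(x)=x^3$; restricting the sum to $\G_N^\a$ costs only an exponentially small amount by the Gaussian tails of $p_N$ together with $\a<1/2$. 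Combined with the computation leading to \eqref{Markov6} in the proof of Lemma \ref{varsecmom'}, the right-hand side equals $\E\bigl[\E_\s(H_N(\s)^3)^2\bigr]/(6N^3)$, so that the $\xi^3$ contribution to $\wt A_{21}$ equals
\[
\frac{\b^6}{36(1+2\b^2 a_N^2)^3}\E\bigl[\E_\s(H_N(\s)^3)^2\bigr]=\frac{\b^6}{36}\E\bigl[\E_\s(H_N(\s)^3)^2\bigr]\bigl(1+O(a_N^2)\bigr),
\]
matching precisely the quantity that must appear in \eqref{claim5'}.

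To close the argument I will bound the $\xi^k$ Taylor remainder for $k\geq 4$ using \eqref{badbound.2}: $\sum_m|\xi|^kp_N(m)\lesssim N^k\binom{N}{p}^{-k}N^{pk/2}\lesssim N^{k(1-p/2)}$, which for $p>2$ and $k\geq 4$ is $o(N^{3-3p/2})$. Then I will expand the prefactor $\exp(-\b^4 Na_N^2+O(N^{3-2p}))$ through order $N^2 a_N^4$, multiply by the expansion of $\wt A_{21}$, and subtract the square of $\E[Z_\e^\leq]$ computed from \eqref{moment13.1}. The contributions of orders $Na_N^2$ and $N^2 a_N^4$ cancel exactly as they did in the derivation of \eqref{finito4'}; the cross term between the prefactor and the $\xi^3$ term is of order $N^{2-p}\cdot N^{3-3p/2}=N^{5-5p/2}$, which is $o(N^{3-3p/2})$ for $p>2$; and the only new surviving term is $\tfrac{\b^6}{36}\E\bigl[\E_\s(H_N(\s)^3)^2\bigr]$, as needed.

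The main obstacle is really just careful bookkeeping: one must certify that every residual error---from the exponentially small contributions ($B$, $A_1$, $R_2$, and the tail $|m|>N^{-\a}$), from the $\xi^{\geq 4}$ Taylor remainder, from the $O(N^{3-2p})$ residues inside the prefactor, and from the prefactor-times-expansion cross products---is $o(N^{3-3p/2})$. Each of these bounds relies on $p>2$, most critically through $N^{4-2p}\ll N^{3-3p/2}$, an inequality that fails at $p=2$ and is thus consistent with the statement of the lemma being restricted to $p>2$.
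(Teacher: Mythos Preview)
Your proposal is correct and follows essentially the same route as the paper: keep the cubic term $\tfrac{1}{3!}\sum_m \xi(m)^3 p_N(m)$ in the expansion of $\wt A_{21}$ explicit, identify it via \eqref{switch2} and \eqref{Markov6} with $\tfrac{\b^6}{36}\E\bigl[\E_\s(H_N(\s)^3)^2\bigr]$ up to the harmless factor $(1+2\b^2 a_N^2)^{-3}$, and check that every other residual is $o(N^{3-3p/2})$. One small inaccuracy: the expansion \eqref{strange} in Section~\ref{kadeux} already goes through $\xi^3$ (with remainder $\xi^4$); what changes here is not the \emph{order} of the Taylor expansion but that the cubic contribution is now computed exactly rather than estimated crudely via \eqref{badbound.2} as was done to reach \eqref{badbound.1}.
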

and
\begin{lemma} \label{claim6} For all $\b \in \R_+$, 
\be\label{claim6'}
\lim_{N \uparrow\infty}N^{\left(\frac{3p}{2}-3\right)}\left|\E\left({\E_{\sigma} \left(-H_N(\s)^3\right)}\left(Z_\e^\leq-\frac{\b^3}{3!}{\E_{\sigma} \left(-H_N(\s)^3\right)}\right)\right)\right|=0.
\ee
\end{lemma}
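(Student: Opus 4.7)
Plan. The scaling $N^{3p/2-3}=\alpha_N(p)^2$ for even $p$ matches the order of $\E[(\E_\sigma(-H_N^3))^2]$ established in Lemma~\ref{varsecmom'}, so the task reduces to showing that $\E[\E_\sigma(-H_N^3)\,Z_\e^\leq]$ equals $\tfrac{\b^3}{3!}\E[(\E_\sigma(-H_N^3))^2]$ up to $o(N^{3-3p/2})$. (For $p$ odd, $\E_\sigma H_N(\sigma)^3\equiv 0$ by parity of the number of spin factors, and the claim is vacuous.) First I would replace $Z_\e^\leq$ by $\mathcal Z_N(\b)$: by Cauchy--Schwarz the induced error is bounded by $\sqrt{\E[(\E_\sigma H_N^3)^2]}\,\sqrt{\E[(Z_\e^>)^2]}=O(N^{3/2-3p/4})\cdot O(\eee^{-\mathfrak c N})$, where the second factor follows from a bound like $Z_\e^>\leq\eee^{-\mathfrak c N}\mathcal Z_N(\b+\theta)$ for small $\theta>0$ (an exponential-moment argument analogous to Lemma~\ref{veryclose.1}) combined with $\E\mathcal Z_N(\b+\theta)^2=O(1)$ for $\b+\theta<\b_p$.

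The target thus becomes $\E[\E_\sigma(-H_N^3)\,\mathcal Z_N(\b)]=\E_{\sigma,\sigma'}\E[-H_N(\sigma)^3\,\eee^{-\b H_N(\sigma')-NJ_N(\b)}]$. I would absorb the quadratic-in-$J$ exponent $-NJ_N(\b)=-\tfrac{\b^2}{2}a_N^2\sum_A J_A^2$ by tilting to the measure $\P'$ under which the $J_A$'s are i.i.d.\ $\mathcal N(0,\lambda^{-2})$, with $\lambda^2=1+\b^2a_N^2$. This produces the outside factor $C_N\equiv\lambda^{-\binom{N}{p}}\eee^{\b^2N/(2\lambda^2)}=\E[\mathcal Z_N(\b)]=1+O(N^{2-p})$ (cf.\ Lemma~\ref{Zmoment.1}). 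Under $\P'$, the pair $(X,Y)=(-H_N(\sigma),-\b H_N(\sigma'))$ is centred jointly Gaussian with $\Var_{\P'}(X)=N/\lambda^2$, $\Var_{\P'}(Y)=\b^2 N/\lambda^2$ and $\Cov_{\P'}(X,Y)=\b N f_{p,N}(R_N(\sigma,\sigma'))/\lambda^2$. The Wick-type identity $\E[X^3\eee^Y]=\eee^{\Var Y/2}\bigl(\Cov(X,Y)^3+3\Cov(X,Y)\Var(X)\bigr)$ for centred jointly Gaussian $(X,Y)$ (immediate from writing $X=aY+Z$ with $Z\perp Y$) then gives
\[
\E[\E_\sigma(-H_N^3)\,\mathcal Z_N(\b)]=C_N\left(\frac{\b^3 N^3}{\lambda^6}\E_{\sigma,\sigma'}[f_{p,N}(R)^3]+\frac{3\b N^2}{\lambda^4}\E_{\sigma,\sigma'}[f_{p,N}(R)]\right).
\]

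The crucial observation is that the second term vanishes identically: $\E_{\sigma,\sigma'}[f_{p,N}(R)]=\binom{N}{p}^{-1}\sum_{A\in I_N}\E_\sigma[\sigma_A]\E_{\sigma'}[\sigma'_A]=0$ since each $A$ is non-empty. Using $6N^3\E_{\sigma,\sigma'}[f_{p,N}(R)^3]=\E[(\E_\sigma(-H_N^3))^2]$ from the proof of Lemma~\ref{varsecmom'}, the first term equals $\tfrac{\b^3}{3!}\E[(\E_\sigma(-H_N^3))^2]\cdot C_N/\lambda^6$. Since $C_N/\lambda^6-1=O(N^{2-p})$ and $\E[(\E_\sigma(-H_N^3))^2]=O(N^{3-3p/2})$, subtracting $\tfrac{\b^3}{3!}\E[(\E_\sigma(-H_N^3))^2]$ leaves a remainder of order $O(N^{5-5p/2})$; multiplying by $N^{3p/2-3}$ yields $O(N^{2-p})\to 0$ for $p>2$, as required.

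The main delicate point is the \emph{exact} vanishing of the linear-in-$f$ term rather than vanishing only up to subleading corrections: if $\E_{\sigma,\sigma'}[f_{p,N}(R)]$ were only $O(N^{-p/2})$ (as a naive moment expansion of $f_{p,N}$ in powers of $R$ might suggest), a contribution of size $3\b N^2\cdot O(N^{-p/2})=O(N^{2-p/2})$ would survive and, after rescaling by $N^{3p/2-3}$, would blow up like $N^{p-1}$. The true cancellation rests on the explicit representation $f_{p,N}(R)=\binom{N}{p}^{-1}\sum_A\sigma_A\sigma'_A$ combined with the independence of $\sigma$ and $\sigma'$; once this is in hand, the remaining bookkeeping of the $O(N^{2-p})$ correction coming from $C_N/\lambda^6$ is routine.
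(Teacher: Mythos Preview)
Your computation of $\E[\E_\sigma(-H_N^3)\,\mathcal Z_N(\b)]$ is correct and reaches exactly the paper's formula (3.40); the paper obtains it by expanding $\E_\sigma(-H_N^3)$ over distinct triples $(A,B,C)\in I_N$ and factoring the $J$-expectation as a product of one-dimensional Gaussian integrals (equations (3.36)--(3.39)), while your tilting-plus-Wick route repackages the same Gaussian computation more conceptually. Your observation that the linear-in-$f_{p,N}$ term vanishes exactly is what the paper gets automatically from the $(\neq)$ restriction, so the two arguments are equivalent here.

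The handling of the $Z_\e^>$ remainder, however, has a real gap. First, the pointwise bound $Z_\e^>\leq\eee^{-\mathfrak c N}\mathcal Z_N(\b+\theta)$ is not literally true: $Z_\e^>$ carries the factor $\eee^{-NJ_N(\b)}$ while $\mathcal Z_N(\b+\theta)$ carries $\eee^{-NJ_N(\b+\theta)}$, and their ratio is an unbounded random variable. More seriously, any Cauchy--Schwarz bound of the form $\|Z_\e^>\|_2$ fails in the full stated range: the claim ``$\E\mathcal Z_N(\b+\theta)^2=O(1)$ for $\b+\theta<\b_p$'' is false --- this is precisely why the paper needs the truncation $Z_\e^\leq$ in the first place --- and in fact $\E[(Z_\e^>)^2]$ itself blows up once $\b>\sqrt{\ln 2}$, since the diagonal contribution $\s=\s'$ alone gives order $2^{-N}\eee^{\b^2 N}$ (the indicator $\{X_\s>(1+\e)\b\sqrt N\}$ is not restrictive after tilting by $\eee^{2\b\sqrt N X_\s}$). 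Thus your Cauchy--Schwarz route does not cover the interval $(\sqrt{\ln 2},\b_p)$, let alone all $\b\in\R_+$ as stated. The paper avoids this by applying H\"older \emph{inside} the $\s$-average, separating $\eee^{-\b H_N(\s)}\1_{\{-H_N>(1+\e)\b N\}}$ from $|\E_{\s'}(-H_N^3)|\,\eee^{-NJ_N(\b)}$ (see (3.44)--(3.48)); this uses only a first-moment Gaussian tail estimate and polynomial moments of $\E_\s H_N^3$, and is valid for every $\b>0$.
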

 Lemma \ref{claim5} and \ref{claim6} clearly imply Proposition \ref{diff0} for $p$ even.

 \begin{proof} [Proof of Lemma \ref{claim5}] 
We will now improve the estimate of the second moment of ${Z_\e^\leq}$
started with Eq. \eqv(mom21'). 
We write 
\be
\E\left[(Z_\e^\leq)^2\right]= A+B,
\ee
with $A, B$ are given in \eqv(mom21') and $A\leq A_1+A_2$, with $A_1,A_2$ defined 
in \eqv(mom22) and \eqv(mom22.1). The estimates obtained in Section 2 for $B$ (Lemma \thv(B.1)) and 
$A_1$ (Eq. \eqv(facile2)) are good enough, but we need to improve the bound on $A_2$.
 Recall that in the final bound \eqv(badbound.1) for $A_1$ there was an error term of order $N^{3-3p/2}$, which would not vanish if multiplied with the $N^{3p/2-3}$.
 This term is due to the cubic term in the expansion \eqv(strange). 
 But this term reads 
 \be\Eq(luck.3)\frac1{3!}
  \sum_{m \in \Gamma_N}{\left(\frac{\b^2 N f_N^p\left( m \right)} {2 \b^2 a_N^2+1}\right)^3} p_N\left(m\right).
\ee
 But recall that 
\be\label{Markov6b}
\E\left(\frac{\b^6}{3!^2}{\E_{\sigma} \left(H_N(\s)^3\right)^2}\right)= \sum_{m \in \Gamma_N} \frac{ \left(\b^2 Nf_N^p\left( m \right)\right)^3 }{3!}p_N(m).
\ee
Therefore, im the expression in \eqv(claim5'), this term exactly cancels the unpleasant cubic term in the expansion of $A_2$.

 Recalling \eqv(strange.2),
\be\label{finito2'''-}
A_2
 =
 \sum_{m \in \Gamma_N} \left( 1
 +\frac12{\left(\frac{\b^2 N f_N^p\left( m \right)} {2 \b^2 a_N^2+1}\right)^2}
 +\frac1{3!}{\left(\frac{\b^2 N f_N^p\left( m \right)} {2 \b^2 a_N^2+1}\right)^3}\right) p_N\left(m\right) + O\left(N^{4-2p}\right).
\ee
 Hence, we arrive at 
\bea\label{finito8}\nonumber
\E\left({Z_{\epsilon}^{\leq}}^2\right)&=&\left(1+\frac{\b^4Na_N^2}{2}+\frac{1}{3!}\sum_{m \in \Gamma_N} {\left(\b^2 N f_N^p\left( m \right)\right)^3} p_N(m) \right)\left(1-\b^4N a_N^2+O(N^{4-2p})\right)\\
&=&1-\frac{\b^4Na_N^2}{2}+\frac{1}{3!}\sum_{m \in \Gamma_N} {\left(\b^2 N f_N^p\left( m \right)\right)^3} p_N(m) +O(N^{4-2p}).
\eea
By \eqref{finito5}, 
\be\label{Markov7}
\E({Z_{\epsilon}^{\leq}})^2=1- \frac{\b^4 N a_N^2}{2}+O\left(N^{4-2p}\right),
\ee
finally using \eqv(Markov6b), we get Lemma
 \ref{claim5} and the lemma is proven.
\end{proof}

 \begin{proof} [Proof of Lemma \ref{claim6}] By definition of $Z_{\epsilon}^{\leq}$, we can re-write
\bea\label{Markov9}
&&\left|\E\left({\E_{\sigma} \left(-H_N(\s)^3\right)}\left({Z_{\epsilon}^{\leq}}-\frac{\b^3}{3!}{\E_{\sigma} \left(-H_N(\s)^3\right)}\right)\right)\right|\\\nonumber
&&=\left|\E\left({\E_{\sigma} \left(-H_N(\s)^3\right)}
\left(\mathcal{Z}_N(\b)-Z_{\epsilon}^{>}\right)\right)-\frac{\b^3}{3!}\E \left( \left({\E_{\sigma} \left(H_N(\s)^3\right)}\right)^2\right)\right|\\
&&\leq \left|\E\left({\E_{\sigma} \left(-H_N(\s)^3\right)}\mathcal{Z}_N(\b)\right)-\frac{\b^3}{3!}\E \left( \left({\E_{\sigma} \left(H_N(\s)^3\right)}\right)^2\right)\right|+\left| \E\left({\E_{\sigma} \left(H_N(\s)^3\right)}Z_{\epsilon}^{>}\right) \right|.\nonumber
\eea
The first term of the last line can be calculated explicitly. By \eqref{H3}, 
\bea\label{Markov10}\nonumber
&&\E\left({\E_{\sigma} \left(-H_N(\s)^3\right)}\mathcal{Z}_N(\b)\right)
={ a_N^3} \sum_{(\neq)}\E_{\sigma}\left(\s_{A}\s_{B}\s_{C}\right) \E\left( J_A J_B J_C \mathcal{Z}_N(\b)\right)\nonumber\\
&&\quad={ a_N^3} \sum_{(\neq)}\E_{\sigma}\left(\s_{A}\s_{B}\s_{C}\right) \E_{\sigma'} \E\left( J_A J_B J_C \eee^{-\b H_N(\s')-NJ_N(\b)}\right).
\eea
 Now,   
\bea\label{Markov13}
&&\E \left( J_A J_B J_C \eee^{-\b H_N(\s)-NJ_N(\b)}\right)=
\E \left( J_A J_B J_C  \eee^{\sum_{D\in I_N} \left(\b a_N\s_DJ_D-\frac{\b^2a_N^2}2J_D^2\right)}\right)\nonumber\\
&&\quad=\prod_{D \ \in I \setminus \{A,B,C\}}\E \left(\eee^{ \b a_N \s_{D}J_{D}- \frac{\b^2 a_N^2}{2}J_{D}^2}\right)\prod_{D \ \in \{A,B,C\}}\E \left(J_D \eee^{a_N \b \s_{D}J_{D}- \frac{a_N^2 \b^2}{2}J_{D}^2}\right).
\eea
We already have computed the terms in the first product, see \eqref{moment11'}.
For the second, we get by elementary integration,
\be\label{Markov14}
\E \left(J_{D}\eee^{ \b a_N \s_{D}J_{D}- \frac{1}{2} \b^2 a_N^2J_{D}^2}\right)
=\eee^{\left(\frac{\b^2 a_N^2}{2(1+2 \b^2 a_N^2)}-\frac{1}{2}\ln{(1+\b^2 a_N^2)}\right)}\frac{ \b a_N \s_{D}}{\left(1+ \b^2 a_N^2\right)}.
\ee
Therefore, 
\be\label{Markov15}
\E \left( J_A J_B J_C \eee^{H-NJ_N(\b)}\right)= \eee^{\binom{N}{p}\left(\frac{\b^2 a_N^2}{2(1+2 \b^2 a_N^2)}-\frac{1}{2}\ln{(1+\b^2 a_N^2)}\right)}\frac{ \b^3 a_N^3 \s_{A}\s_{B}\s_{C}}{\left(1+\b^2  a_N^2\right)^3}.
\ee
Using \eqref{Markov15} in  \eqref{Markov10} 
gives that 
\bea\label{Markov16}\nonumber
\E\left({\E_{\sigma} \left(-H_N(\s)^3\right)}\mathcal{Z}_N(\b)\right)&=&\frac{\b^3a_N^6}{\left(1+ \b^2 a_N^2\right)^3} \eee^{\binom{N}{p}\left(\frac{\b^2 a_N^2}{2(1+2 \b^2 a_N^2)}-\frac{1}{2}\ln{(1+\b^2 a_N^2)}\right)} \sum_{(\neq)}\E_{\sigma}\left(\s_{A}\s_{B}\s_{C}\right)^2  
\\
&= &\frac{\b^3\E \left( \left({\E_{\sigma} \left(-H_N(\s)^3\right)}\right)^2\right)}{3!\left(1+ \b^2 a_N^2\right)^3} \eee^{\binom{N}{p}\left(\frac{\b^2 a_N^2}{2(1+2 \b^2 a_N^2)}-\frac{1}{2}\ln{(1+\b^2 a_N^2)}\right)}.
\eea
Using \eqref{Markov16}, we get
\bea\label{Markov17}\nonumber
&&\left|\E\left({\E_{\sigma} \left(-H_N(\s)^3\right)}\mathcal{Z}_N(\b)\right)-\frac {\b^3}{3!}
\E \left( \left({\E_{\sigma} \left(-H_N(\s)^3\right)}\right)^2\right)\right| \\
&&=\frac {\b^3}{3!}\E \left( \left(\E_{\sigma} \left(-H_N(\s)^3\right)\right)^2\right) 
\left( \frac{\eee^{\binom{N}{p}\left(\frac{\b^2 a_N^2}{2(1+2 \b^2 a_N^2)}-\frac{1}{2}\ln{(1+
\b^2 a_N^2)}\right)}}{\left(1+\b^2  a_N^2\right)^3}-1\right). 
\eea
A simple expansion shows that 
\be\Eq(smallish)
\frac{\eee^{\binom{N}{p}\left(\frac{\b^2 a_N^2}{2(1+2 \b^2 a_N^2)}-\frac{1}{2}\ln{(1+
\b^2 a_N^2)}\right)}}{\left(1+\b^2  a_N^2\right)^3}-1=O(N^{2-p}).
\ee
Since 
\be\Eq(convergerei)
N^{\frac{3p}2-3}\frac {\b^6}{3!^2}\E \left( \left(\E_{\sigma} \left(-H_N(\s)^3\right)\right)^2\right) 
\rightarrow \sigma(\b, p)^2,
\ee
 and therefore
\be\label{Markov17bonus}
\lim_{N \to \infty }N^{\left(\frac{3p}{2}-3\right)}\left|\E\left({\E_{\sigma} \left(-H_N(\s)^3\right)}
\mathcal{Z}_N(\b)\right)-\frac{\b^3}{3!}\E \left( \left({\E_{\sigma} \left(-H_N(\s)^3\right)}\right)^2\right)\right|=0.
\ee
It  remains to prove that $\left| \E\left(\frac{\E_{\sigma} \left(-H_N(\s)^3\right)}{3!}Z_{\epsilon}^{>}\right) \right|$ tends to $0$. To see that, we use the H\"older inequality.
\bea\label{Markov18}\nonumber
&&\left| \E\left({\E_{\sigma} \left(-H_N(\s)^3\right)}Z_{\epsilon}^{>}\right) \right|
\leq \E\left(\left| {\E_{\sigma'} \left(-H_N(\s)^3\right)}\right| \E_\s\left(\eee^{-\b H_N(\s)} \1_{\{-H_N(\s)> (1+\epsilon) \b N\}}\eee^{-NJ_N(\b)}\right) \right) \\\nonumber
&&\quad=  \E_\s\left( \E\left(\left| {\E_{\sigma'} \left(-H_N(\s)^3\right)}\right|\eee^{-\b H_N(\s)} \1_{\{H_N(\s)> (1+\epsilon) \b^2 N\}}\eee^{-NJ_N(\b)}\right) \right) \\
&&\quad\leq \E_\s \bigg(\E \left(\eee^{ q_1 \b \sqrt N X_\s} \1_{\{X_\s>(1+\epsilon) \b \sqrt N\}}\right)^{\frac{1}{q_1}} \E  \left( \left| {\E_{\sigma'} \left(H_N(\s)^3\right)}\right|^{q_2} \eee^{-q_2NJ_N(\b)} \right)^{\frac{1}{q_2}}\bigg),
\eea
for  $\frac{1}{q_1}+\frac{1}{q_2}=1$. For the last factor, the Cauchy-Schwarz inequality gives

\be\label{Markov19}
\E  \left( \left| {\E_{\sigma'} \left(H_N(\s)^3\right)}\right|^{q_2} \eee^{-q_2NJ_N(\b)} \right)^{\frac{1}{q_2}}
\leq  \E  \left( \left| {\E_{\sigma'} \left(-H_N(\s)^3\right)}\right|^{2q_2} \right)^{\frac{1}{2q_2}}\E \left( \eee^{-2q_2NJ_N(\b)} \right)^{\frac{1}{2q_2}}.
\ee

Again by {\bf Fact I} in the appendix, the last line in 
\eqv(Markov18) is bounded from above by
\be\label{Markov20}
 \eee^{\left(-\frac{(1+\epsilon)^2 \b^2 N}{2 q_1}+(1+\epsilon) \b^2 N \right)} \left(\E  \left( \left| \E_{\sigma} \left(-H_N(\s)^3\right)\right|^{2q_2} \right)^{\frac{1}{2q_2}} \E \left( \eee^{-2q_2NJ_N(\b)} \right)^{\frac{1}{ 2q_2}}\right).
\ee
Finally, by explicit computation,
\be\label{Markov21}
\E \left( \eee^{-2q_2NJ_N(\b)} \right)^{1/ 2q_2}= \eee^{-N\b^2/2 + O\left( N^{2-p}\right)} .
\ee
Combining \eqref{Markov20} and \eqref{Markov21}, we obtain
\bea\label{Markov22}
&&\left| \E\left({\E_{\sigma'} \left(H_N(\s)^3\right)}Z_{\epsilon}^{>}\right) \right| 
\nonumber\\
&&\leq  \exp{\left(-\b^2 N \left( \frac{\epsilon^2}{2} + O\left(q_1-1\right)+O\left( N^{2-p} \right) \right)\right)} \E  \left( \left| {\E_{\sigma'} \left(-H_N(\s)^3\right)}\right|^{2q_2} \right)^{\frac{1}{2q_2}}.
\eea
For every $\epsilon>0$, we can choose $q_1$ close to $1$ such that the first term on the r.h.s. of \eqref{Markov22} is exponentially small. The second term will however stay polynomial. This concludes the proof of Lemma \ref{claim6}.
\end{proof} 

This concludes the proof of Proposition \ref{diff0} in case of $p$ even. \\
${}\hfill \square$

\subsection{Proof of Proposition \ref{diff0}: $p$ odd.} \label{poddprim}
The proof in the odd case is in principle similar to the even case. It is enough to show that 
\be\label{aie}
\lim_{N \uparrow\infty}N^{p-2}\left({Z_{\epsilon}^{\leq}}- T_N(\b)\right)=0,
 \ee
 in probability. Using \eqref{esperance0} we decompose
\be\label{aie1'}
\left|Z_{\epsilon}^{\leq}- T_N(\b)\right| \leq  \left|Z_{\epsilon}^{\leq}-\b^4\mathcal{H}_4-\E\left(Z_{\epsilon}^{\leq}\right)\right| +\left|\E\left(Z_{\epsilon}^{\leq}\right)-1+\frac{2\b^4 a_N^4}{4!}\sum_{A \in I}J_{A}^4\right|.
 \ee
 The second term is irrelevant. Using \eqv(moment13.1) and  the law of large numbers
 from  \eqref{espLLN}, we see that the second term is smaller than $o(N^{2-p})$ and hence gives 
 a vanishing contribution to \eqv(aie).
  For the first term in \eqv(aie1') we control its second moment. We write
\bea\label{aie2}\nonumber
\E\left( \left(Z_{\epsilon}^{\leq}-\b^4\mathcal{H}_4-\E\left(Z_{\epsilon}^{\leq}\right) \right)^2 \right)&=&2\b^4\E\left(\mathcal{H}_4\E\left(Z_{\epsilon}^{\leq}\right)\right)-2\b^4\E\left(\mathcal{H}_4 Z_{\epsilon}^{\leq}\right)+\E\left({Z_{\epsilon}^{\leq}}^2\right)-\E\left({Z_{\epsilon}^{\leq}}\right)^2\\
&&\quad+\b^8\E\left({\mathcal{H}_4}^2\right)\\\nonumber
&=&2\b^4\E\left( \mathcal{H}_4 (\b^4\HH_4-Z_{\e}^\leq))\right)+\E\left( {Z_{\epsilon}^{\leq}}^2\right)-\E\left(  Z_{\epsilon}^{\leq}\right)^2-\b^8E\left( \mathcal{H}_4^2\right),
 \eea
 where we used that $\HH_4$ has zero mean.

We will prove the following two lemmata.
\begin{lemma}\label{claim7} For all $\b$, 
\be\label{claim7'}
\lim_{N \to +\infty}N^{2p-4}\left|\E\left( \mathcal{H}_4 (Z_{\epsilon}^{\leq}-\b^4\mathcal{H}_4)\right)\right|=0.
\ee
\end{lemma}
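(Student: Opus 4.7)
The plan is to decompose
\be
\Eq(planC7)
\E\bigl(\mathcal{H}_4(Z_{\epsilon}^{\leq}-\b^4\mathcal{H}_4)\bigr) = \E\bigl(\mathcal{H}_4 \mathcal{Z}_N(\b)\bigr) - \b^4\E\bigl(\mathcal{H}_4^2\bigr) - \E\bigl(\mathcal{H}_4 Z_{\epsilon}^{>}\bigr),
\ee
and bound each of the three pieces separately. Since by Lemma \thv(varsecmom) one has $\E(\mathcal{H}_4^2)=O(N^{4-2p})$, the target is that each piece be $o(N^{4-2p})$ so that it vanishes after multiplication by $N^{2p-4}$. The structure parallels Lemma \thv(claim6), only with the single marked multi-index $A$ of $\E_\s(-H_N^3)$ replaced by four distinct marked multi-indices $A,B,C,D$ of $\mathcal H_4$.

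The main computation is for $\E(\mathcal{H}_4 \mathcal{Z}_N(\b))$. Mimicking \eqv(Markov13)--\eqv(Markov16), Fubini and the independence of the $J_A$'s give, for any four distinct $A,B,C,D\in I_N$,
\be
\E\bigl[J_A J_B J_C J_D\,\eee^{-\b H_N(\s')-NJ_N(\b)}\bigr] = \eee^{\binom{N}{p}L(\b,N)}\,\frac{\b^4 a_N^4\,\s'_A\s'_B\s'_C\s'_D}{(1+\b^2 a_N^2)^4},
\ee
where $L(\b,N)=\frac{\b^2 a_N^2}{2(1+2\b^2 a_N^2)}-\frac12\ln(1+\b^2 a_N^2)$ is the same log factor that appears in \eqv(Markov15). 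Inserting this into the definition of $\mathcal{H}_4$ and recognising the resulting overlap sum from \eqv(gastro100), namely $\E(\mathcal{H}_4^2) = \frac{a_N^8}{4!}\sum_{(\neq)}\bigl[\E_\s(\s_A\s_B\s_C\s_D)\bigr]^2$, I would obtain the exact identity
\be
\Eq(keyident)
\E\bigl(\mathcal{H}_4 \mathcal{Z}_N(\b)\bigr) = \frac{\b^4\,\eee^{\binom{N}{p}L(\b,N)}}{(1+\b^2 a_N^2)^4}\,\E(\mathcal{H}_4^2).
\ee
A Taylor expansion identical to the one yielding \eqv(smallish) then shows that the prefactor equals $\b^4(1+O(N^{2-p}))$, so the first two terms on the right of \eqv(planC7) combine to $O(N^{2-p})\cdot O(N^{4-2p})=O(N^{6-3p})$; multiplied by $N^{2p-4}$ this is $O(N^{2-p})\to 0$ for $p\geq 3$.

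For the truncation term, Cauchy--Schwarz gives
\be
|\E(\mathcal{H}_4 Z_\e^{>})|\leq \E(\mathcal{H}_4^2)^{1/2}\,\E\bigl((Z_\e^{>})^2\bigr)^{1/2}.
\ee
The first factor is only polynomially small, but for the second I would rerun the H\"older argument of Lemma \thv(veryclose.1) on $(Z_\e^{>})^2$: split off the $\eee^{-2NJ_N(\b)}$ factor (controlled by \eqv(fm3)) from the Gaussian expectation carrying the two indicator functions, then use Fact I applied to the field $X_\s+X_{\s'}$ with the doubled threshold $2(1+\e)\b\sqrt N$. Choosing $q_1>1$ close to $1$ and $\e$ small enough produces $\E((Z_\e^{>})^2)\leq\exp(-\mathfrak c N)$, making this term exponentially negligible against $N^{2p-4}$.

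The main obstacle is the sharp cancellation in \eqv(keyident): both $\E(\mathcal{H}_4 \mathcal{Z}_N(\b))$ and $\b^4\E(\mathcal{H}_4^2)$ are of the same order $N^{4-2p}$, so the whole argument rests on showing that the explicit prefactor $\eee^{\binom{N}{p}L(\b,N)}/(1+\b^2 a_N^2)^4$ equals $1$ to order $N^{2-p}$. This is the same sharp expansion that underlies \eqv(smallish) and \eqref{firstmomz}, and it must be carried out without absorbing the extra $N^{-1}$-factor into the error. Once this is done, the remaining pieces are routine given the tools already developed in Sections 2 and 3.
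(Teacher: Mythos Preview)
Your treatment of the main term is essentially identical to the paper's: the decomposition \eqv(planC7), the identity \eqv(keyident), and the expansion of the prefactor all match \eqref{gastro}--\eqref{gastro6} verbatim, and your error bookkeeping $O(N^{2-p})\cdot O(N^{4-2p})=O(N^{6-3p})$ is correct.

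The gap is in your handling of the truncation term $\E(\mathcal H_4 Z_\epsilon^>)$. After Cauchy--Schwarz you need $\E\bigl((Z_\epsilon^>)^2\bigr)$ to be exponentially small, and you propose to get this by applying Fact~I to the field $X_\s+X_{\s'}$ with threshold $2(1+\epsilon)\b\sqrt N$. But $X_\s+X_{\s'}$ has variance $2(1+f_{p,N}(R_N(\s,\s')))$, so after standardising, the condition $b>a$ in Fact~I becomes $(1+\epsilon)>q_1(1+f_{p,N}(m))$. For $m$ close to $1$ (or $|m|$ close to $1$ when $p$ is even) one has $f_{p,N}(m)\to 1$, and the inequality fails for any $q_1\geq 1$ and small $\epsilon$. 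Thus the argument breaks down precisely on the large-overlap part of the $\s,\s'$-sum, and ``rerunning Lemma~\thv(veryclose.1)'' is not enough: that lemma involves a \emph{single} replica and never sees the covariance. Repairing this via an overlap split would bring back the $B$-term analysis of Lemma~\thv(B.1), which needs $\b<\b_p$ --- whereas the present lemma is stated for all $\b$.

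The paper avoids this entirely by not passing through $\E((Z_\epsilon^>)^2)$. It bounds $|\E(\mathcal H_4 Z_\epsilon^>)|\leq \E_\s\E\bigl(|\mathcal H_4|\,\eee^{\b\sqrt N X_\s}\1_{\{X_\s>(1+\epsilon)\b\sqrt N\}}\eee^{-NJ_N}\bigr)$ and applies H\"older inside the single-$\s$ expectation, exactly as in \eqref{Markov18}--\eqref{Markov22}: one factor is $\E(\eee^{q_1\b\sqrt N X_\s}\1_{\{X_\s>(1+\epsilon)\b\sqrt N\}})^{1/q_1}$, where Fact~I applies with a single standard Gaussian and no covariance obstruction, and the other is $\E(|\mathcal H_4|^{q_2}\eee^{-q_2 NJ_N})^{1/q_2}$, handled by a further Cauchy--Schwarz. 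This yields the exponential smallness for every $\b>0$.
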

and 
\begin{lemma}\label{claim8} For all $\b<\b_p$, 
\be\label{claim8'}
\lim_{N \to +\infty}N^{2p-4}\left|\E\left({Z_{\epsilon}^{\leq}}^2\right)-\E\left({Z_{\epsilon}^{\leq}}\right)^2-\b^8\E\left(\mathcal{H}_4^2\right)\right|=0.
\ee
\end{lemma}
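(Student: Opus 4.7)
The strategy parallels the proof of Lemma \ref{claim5}, but for $p$ odd the cubic contribution vanishes by parity, so the Taylor expansion of $e^\xi$ used in \eqv(lessstrange) must be pushed one order further to capture the leading correction. I would reuse the decomposition $\E[(Z_\epsilon^\leq)^2] = A_1 + A_2 + B$ from Section 2.2 together with the splitting $A_2 = A_{21} + R_2$, noting that the bounds on $B$ (Lemma \thv(B.1)), on $A_1$ (equation \eqv(facile2)), and on $R_2$ are all sub-polynomially small and hence contribute $o(N^{4-2p})$ after multiplication by $N^{2p-4}$. The task therefore reduces to computing $A_{21}$ and $(\E[Z_\epsilon^\leq])^2$ to precision $o(N^{4-2p})$.

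I would apply the Taylor estimate $\bigl|e^\xi - \sum_{k=0}^{5}\xi^k/k!\bigr| \leq \xi^6 e^{|\xi|}/6!$ with $\xi = \beta^2 N f_{p,N}(m)/(1+2\beta^2 a_N^2)$ and evaluate $\sum_{m \in \Gamma_N}\xi^k p_N(m)$ for each $k$. The key observation is that for odd $p$, every odd moment
\begin{equation*}
\sum_{m \in \Gamma_N} f_{p,N}(m)^k p_N(m) = \binom{N}{p}^{-k}\,\E_{\sigma,\sigma'}\left(\sum_{A \in I_N} \sigma_A \sigma_A'\right)^k
\end{equation*}
vanishes identically, since the resulting product involves $kp$ spins, an odd number, and no spin configuration can have every spin paired. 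Thus the $\xi$, $\xi^3$, and $\xi^5$ contributions drop out entirely; the $\xi^2$ term reproduces $\frac{\beta^4 N a_N^2}{2}$ up to an error $O(Na_N^4) = O(N^{3-2p})$; and the $\xi^6$ remainder is of order $N^{6-3p} = N^{2-p}\cdot N^{4-2p} = o(N^{4-2p})$ for $p\geq 3$.

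The crucial $\xi^4$ contribution gives $\frac{\beta^8 N^4}{4!}(1+O(a_N^2))\sum_m f_{p,N}(m)^4 p_N(m)$, which by equation \eqv(gastrofin) (from the proof of Lemma \ref{varsecmom}) equals $\beta^8\,\E(\mathcal H_4^2) + \frac{\beta^8 p!^2}{8}N^{4-2p} + o(N^{4-2p})$. Multiplying $\widetilde A_{21}$ by $\exp(-\beta^4 N a_N^2 + O(N^{3-2p}))$, expanded to second order, the cross-term $-\frac{\beta^4 N a_N^2}{2}\cdot\beta^4 N a_N^2$ cancels the $+\frac{\beta^8 N^2 a_N^4}{2}$ from the Taylor expansion of the outer exponential, leaving
\begin{equation*}
A_{21} = 1 - \frac{\beta^4 N a_N^2}{2} + \beta^8\,\E(\mathcal H_4^2) + \frac{\beta^8 p!^2}{8}N^{4-2p} + o(N^{4-2p}).
\end{equation*}
On the other side, expanding \eqv(firstmomz) to second order (and transferring to $\E[Z_\epsilon^\leq]$ via Lemma \thv(veryclose.1)) yields
\begin{equation*}
(\E[Z_\epsilon^\leq])^2 = 1 - \frac{\beta^4 N a_N^2}{2} + \frac{\beta^8 p!^2}{8}N^{4-2p} + o(N^{4-2p}),
\end{equation*}
since $N^2 a_N^4 = p!^2 N^{4-2p}(1+O(1/N))$. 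The two $\frac{\beta^8 p!^2}{8}N^{4-2p}$ terms cancel, yielding $\E[(Z_\epsilon^\leq)^2] - (\E[Z_\epsilon^\leq])^2 - \beta^8\,\E(\mathcal H_4^2) = o(N^{4-2p})$, which is exactly \eqv(claim8').

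The main obstacle will be the careful bookkeeping: both $A_{21}$ and $(\E[Z_\epsilon^\leq])^2$ must be tracked to order $N^{4-2p}$, requiring retention of the quadratic correction $\frac{\beta^8 N^2 a_N^4}{2}$ in the outer exponential that was absorbed into the error term in Section 2.2. One must also verify that the two $\frac{\beta^8 p!^2}{8}N^{4-2p}$ contributions cancel exactly, using that the subtraction $-\frac{p!^2}{8}$ appearing in the asymptotic \eqv(gastrofin) is precisely what the outer exponential's second-order Taylor expansion produces.
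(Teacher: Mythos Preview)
Your proposal is correct and follows essentially the same route as the paper: push the Taylor expansion \eqv(lessstrange) to fifth order with sixth-order remainder (the paper's \eqv(order.6)), use parity of $p$ to kill the odd $\xi$-powers, identify the quartic contribution with $\beta^8\E(\mathcal H_4^2)+\tfrac{\beta^8 N^2a_N^4}{8}$ via \eqv(gastro98), and match the residual $\tfrac{\beta^8 N^2a_N^4}{8}$ against the second-order expansion of $(\E Z_\epsilon^\leq)^2$ coming from \eqv(firstmomz). The bookkeeping you flag in the last paragraph is exactly what the paper does in \eqv(gastro8)--\eqv(finito5--).
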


We will first prove Lemma \ref{claim7} by following exactly the same strategy as for the case $p$ even.
\begin{proof}[Proof of Lemma \ref{claim7}] 
The proof of this lemma is very similar to that of Lemma \thv(claim6) and we omit many
 details.
As  in \eqref{Markov9}, we start with 
\be\label{gastro}
\left|\E\left( \mathcal{H}_4 (Z_{\epsilon}^{\leq}-\b^4\mathcal{H}_4)\right)\right|\leq \left|\E\left( \mathcal{H}_4 \mathcal{Z}_N(\b)\right)-\b^4\E\left(\mathcal{H}_4^2\right)\right|+ \left|\E\left( \mathcal{H}_4  Z_{\epsilon}^{>}\right)\right|.
\ee
For the first term on the r.h.s. of \eqref{gastro}, we have
\be\label{gastro1}
\E\left( \mathcal{H}_4 \mathcal{Z}_N(\b)\right)
=\frac{a_N^4}{4!}\sum_{(\neq)}  \E_{\sigma}\left(\s_{A}\s_{B}\s_{C}\s_{D}\right) \E_{\s}\E\left(J_{A}J_{B}J_{C}J_{D}\eee^{-H_N(s)-NJ_N(\b)}\right).
\ee
Following now the exact same steps as in the proof of \thv(claim6),
we arrive at the analog of \eqv(Markov16),
\be\label{gastro5}
\E\left( \mathcal{H}_4 \mathcal{Z}_N(\b)\right)
= \frac{\b^4\E \left( \mathcal{H}_4^2\right)}{\left(1+ \b^2 a_N^2\right)^4} \eee^{\binom{N}{p}\left(\frac{\b^2 a_N^2}{2(1+2 \b^2 a_N^2)}-\frac{1}{2}\ln{(1+\b^2 a_N^2)}\right)}.
\ee
 From here one concludes that
\be\label{gastro6}
\lim_{N\uparrow\infty}N^{2p-4}\left|\E\left( \mathcal{H}_4 \mathcal{Z}_N(\b)\right)-\b^4\E\left(\mathcal{H}_4^2\right)\right|=0.
\ee
The second term on the right of \eqv(gastro) is shown to be exponentially small exactly as
the second term in \eqv(Markov9). This concludes the proof of Lemma \thv(claim7).
\end{proof}

\begin{proof}[Proof of Lemma \ref{claim8}]
It remains to prove that
\be
 \lim_{N \to +\infty}N^{2p-4}\left|\E\left({Z_{\epsilon}^{\leq}}^2\right)-\E\left({Z_{\epsilon}^{\leq}}\right)^2-\b^8\E\left(\mathcal{H}_4^2\right)\right|=0 .
\ee 
As in the proof of Lemma \thv(claim5), we improve the estimate on $\E\left((Z_\e^\leq)^2\right)$ by retaining an additional term in the expansion of the 
exponential that then is cancelled by the $\E\left(\HH_4^2\right)$. Again this involves only the term $A_2$. This time, this requires to push the expansion  further and to use
that
 \be\Eq(order.6)
\left|\exp( \xi) - 1 - \xi  - \frac{1}{2}\xi^2- \frac{1}{3!}\xi^3 - \frac{1}{4!}\xi^4-\frac{1}{5!}\xi^5\right|
\leq \frac{1}{6!}\xi^6 \exp |\xi|.  
\ee 
This leads to the estimate
\bea\label{gastro8}
&&\E\left({Z_{\epsilon}^{\leq}}^2\right)\\\nonumber
&&=\sum_{m \in \Gamma_N} \left( 1
+\frac 12\left(\frac{\b^2 N f_N^p\left( m \right)}{2a_N^2+1}\right)^2
+\frac1{4!}\left(\frac{\b^2 N f_N^p\left( m \right)} {2a_N^2+1}\right)^4\right)
 \eee^{\left( -
{\b^4Na_N^2}+O\left( N^{3-2p}\right)\right)}+o(N^{4-2p}),
\eea
where we used that the terms of odd order  vanish by symmetry. 
The quadratic term equals $\frac{\b^4N^2}{2\binom{N}{p}(2 \b^2 a_N^2+1)^2}$.
Moreover, the quartic term gives
\be
\frac1{4!}\sum_{m\in\G_N}p_N(m)\left(\frac{\b^2 N f_N^p\left( m \right)} {2a_N^2+1}\right)^4
=\b^8\E\left(\HH_4^2\right) +\frac{\b^8 N^2a_N^4}{8} +O\left(N^{4-3p}\right).
\ee
Furthermore, using \eqref{firstmomz} we have that
\bea\label{finito5--}
\E({Z_{\epsilon}^{\leq}})^2&=&\left(1- \frac{\b^4}{4} N a_N^2 +\frac{\b^8}{32} N^2 a_N^4 +O\left(N^{3-2p}\right)\right)^2\nonumber\\
&=&1- \frac{\b^4 Na_N^2}2+\frac{2\b^8 N^2a_N^4}{16}+O\left(N^{3-2p}\right).
\eea
Combining these observations, the assertion of Lemma \thv(claim8) follows.
\end{proof}

This concludes the proof of Proposition \thv(diff0) and hence of Theorem  \thv(fluctuationspin0).

\section{\bf Appendix}

We state three useful results for the convenience of the reader. The first concerns standard estimates for truncated exponential moments of Gaussian random variables.

\noindent{\bf Fact I.}\label{I}  {\it 
 Let $\xi$ be a Gaussian random variable with 
  $\E(\xi)=0$,  $\E(\xi^2)=1$. Then for all $a, b>0$
\be
\E[\eee^{a \xi} {\1}_{\{\xi>b\}}] \leq \frac{1}{\sqrt{2\pi
(b-a)}}\eee^{-b^2/2+ab}, \quad \,\text{if }b>a,
\ee
\be
\E[\eee^{a \xi} {\1}_{\{\xi<b\}}] \leq \frac{1}{\sqrt{2\pi
(a-b)}}\eee^{-b^2/2+ab}, \quad \,\text{if }b<a.
\ee
}
\\
The second is the Gaussian concentration of measure inequality, to be found, for example, in \cite{LT}.

\noindent{\bf Fact II.\ }\label{II.}{\it Assume that $f(x_1,\ldots,x_d)$ is a function on ${\R}^d$ 
 with a Lipschitz constant $L$. Let $J_1,\ldots, J_d$ be 
 independent standard Gaussian random variables. 
Then for any 
 $u>0$ 
\be
\P\{|f(J_1,\ldots, J_d)-\E \left( f(J_1,\ldots, J_d)\right )|>u \}\leq 2\exp\{-u^2/(2L^2)\}.       
\ee
}


\begin{thebibliography}{10}

\bibitem{ALR}
M.~Aizenman, J.~Lebowitz, and D.~Ruelle.
\newblock Some rigorous results on the {S}herrington-{K}irkpatrick spin glass
  model.
\newblock {\em Commun. Math. Phys.}, 112(1):3--20, 1987.

\bibitem{BL16}
J.~B. Baik and J.~O. Lee.
\newblock Fluctuations of the free energy of the spherical
  {Sherrington--Kirkpatrick} model.
\newblock {\em Jour. Statist. Phys.}, 165:185?224, 2016.

\bibitem{BB21}
D.~Banerjee and D.~Belius.
\newblock Fluctuations of the free energy of the mixed $p$-spin mean field spin
  glass model, arXiv preprint 2108.03109, 2021.

\bibitem{BovStatMech}
A.~Bovier.
\newblock {\em Statistical mechanics of disordered systems}.
\newblock Cambridge Series in Statistical and Probabilistic Mathematics.
  Cambridge University Press, Cambridge, 2006.

\bibitem{BKL}
A.~Bovier, I.~Kurkova, and M.~L\"{o}we.
\newblock Fluctuations of the free energy in the {REM} and the {$p$}-spin {SK}
  models.
\newblock {\em Ann. Probab.}, 30(2):605--651, 2002.

\bibitem{CDP17}
W.-K. Chen, P.~Dey, and D.~Panchenko.
\newblock Fluctuations of the free energy in the mixed p-spin models with
  external field.
\newblock {\em Probab. Theory Related Fields}, 168:41--53, 2017.

\bibitem{CN}
F.~Comets and J.~Neveu.
\newblock The {S}herrington-{K}irkpatrick model of spin glasses and stochastic
  calculus: the high temperature case.
\newblock {\em Commun. Math. Phys.}, 166(3):549--564, 1995.

\bibitem{De2}
B.~Derrida.
\newblock Random-energy model: an exactly solvable model of disordered systems.
\newblock {\em Phys. Rev. B (3)}, 24(5):2613--2626, 1981.

\bibitem{G85}
E.~Gardner.
\newblock Spin glasses with {$p$}-spin interactions.
\newblock {\em Nuclear Phys. B}, 257(6):747--765, 1985.

\bibitem{GueTo}
F.~Guerra and F.~L. Toninelli.
\newblock The thermodynamic limit in mean field spin glass models.
\newblock {\em Commun. Math. Phys.}, 230(1):71--79, 2002.

\bibitem{LT}
M.~Ledoux and M.~Talagrand.
\newblock {\em Probability in {B}anach spaces}, Volume~23 of {\em Ergebnisse
  der Mathematik und ihrer Grenzgebiete (3)}.
\newblock Springer-Verlag, Berlin, 1991.

\bibitem{SK}
D.~Sherrington and S.~Kirkpatrick.
\newblock Solvable model of a spin glass.
\newblock {\em Phys. Rev. Letts.}, 35:1792--1796, 1972.

\bibitem{T0}
M.~Talagrand.
\newblock Concentration of measure and isoperimetric inequalities in product
  spaces.
\newblock {\em Inst. Hautes \'Etudes Sci. Publ. Math.}, 81:73--205, 1995.

\bibitem{T3}
M.~Talagrand.
\newblock Rigorous low-temperature results for the mean field {$p$}-spins
  interaction model.
\newblock {\em Probab. Theory Related Fields}, 117(3):303--360, 2000.

\end{thebibliography}
\end{document}